\newtheorem{thm}{Theorem}[section]
\newtheorem{lem}[thm]{Lemma}
\theoremstyle{definition}
\newtheorem{defn}[thm]{Definition}
\newcommand{\be}{\begin{equation}}
\newcommand{\ee}{\end{equation}}
\newcommand{\vt}{{\widetilde{v}}}
\newcommand{\ut}{{\widetilde{u}}}
 \newcommand{\R}{\mathbb{R}}
\newcommand{\N}{\mathbb{N}}
\def \calf {{  {\mathcal{F}} }}
\def \calftwo {{  {\mathcal{F}}  }}
\def \cala {{  {\mathcal{A}}  }}
\def \calb {{  {\mathcal{B}}  }}
\def \cald {{  {\mathcal{D}}  }}
 \def  \ltwo {{L^2 (\R^n)}}
  \def  \hone {{H^1 (\R^n)}}
  \newcommand{\ii}{\int_{\R^n}}
\def \thtwot {{  \theta_{t}  }}
\begin{document}

\begin{titlepage}
\title{\Large\bf  Multivalued Non-Autonomous Random Dynamical Systems
for   
Wave Equations without Uniqueness}
\vspace{7mm}

\author{
Bixiang Wang  
\\
\\
Department of Mathematics, New Mexico Institute of Mining and
Technology \vspace{1mm}\\ Socorro,  NM~87801, USA \vspace{1mm}\\
Email: bwang@nmt.edu}
\date{}
\end{titlepage}

\maketitle

\begin{abstract}
This paper deals with the multivalued non-autonomous random dynamical 
system generated by the non-autonomous stochastic wave equations
on unbounded domains, which  has 
a non-Lipschitz nonlinearity with    critical exponent  in the
three dimensional case.
We introduce the concept of weak upper semicontinuity of multivalued functions
and  use   such continuity   to prove  the measurability of multivalued functions
from a metric space to a separable Banach space.
By this approach,   we  show   the  measurability of pullback attractors
of the  multivalued random dynamical system of the wave equations regardless of
the completeness of the underlying probability space.
The asymptotic compactness of solutions  is proved by the method of energy equations,
and the  difficulty  caused by the non-compactness of Sobolev embeddings 
on $\R^n$ is overcome by the uniform estimates on the tails of solutions.
\end{abstract}

{\bf Key words.}       Multivalued dynamical system; non-uniqueness;  random  attractor;  
  critical exponent;  stochastic wave equation.

 {\bf MSC 2000.} Primary 35B40.  Secondary 35B41, 37L30.

\baselineskip=1.2\baselineskip

 \section{Introduction}
\setcounter{equation}{0}

In this paper, we study the existence and measurability of random attractors
of non-autonomous stochastic wave equations with critical non-Lipschitz
nonlinearity on $\R^n$ $(1\le n\le 3$):
 \begin{equation}
\label{intro1}
u_{tt} + \alpha u_t -\Delta u + \lambda u
+ f(x, u) = g(t, x) +  \varepsilon u \circ  {\frac {dw}{dt}},
\ \  t >\tau,
\end{equation}
  with  initial
 conditions
 \begin{equation}
 \label{intro2}
 u( \tau, x ) =u_0(x), \quad u_t(\tau, x ) = u_1(x),
 \end{equation}
 where
  $  \alpha$,  $\lambda$ and $\varepsilon$
  are     positive  constants,
 $g  $ is a 
 non-autonomous deterministic  external term
 in   $L^2_{loc} (\R, \ltwo)  $, 
$f$ is a nonlinear function which
has critical growth rate and is not 
  Lipschitz continuous,  and
 $w $   is  a  standard real-valued
 Wiener process.
 The symbol $\circ$ indicates that    the stochastic term  in \eqref{intro1}
 is understood in the sense of Stratonovich\rq{}s integration.

 The main assumption of this paper is that   the nonlinearity $f$
 is continuous but not Lipschitz continuous, which leads to the non-uniqueness
 of solutions of \eqref{intro1}-\eqref{intro2}.
 Consequently, the dynamical system associated with \eqref{intro1}-\eqref{intro2}
 becomes  a multivalued (or set-valued) function. This introduces many
 difficulties   for  proving the   measurability  of solutions as well as 
 the measurability  of attractors 
 because the measurability of set-valued functions are much more 
 involved than single-valued functions.
 For instance,  under general conditions,
   the measurability of attractors for stochastic
   equations without uniqueness
 still   remains open  
 in  \cite{car1, car4, car6,  car8, wangy1}
 when  the underlying probability space is not complete.
 Indeed, the measurability of attractors in these cases requires either
 the probability space  be  complete \cite{car1, car4, car8, wangy1}
  or the equation satisfy a  very restrictive
 condition \cite{car1}.
 In the present  paper,  we will solve this problem and prove an abstract
 result on the  measurability  
of  set-valued functions from a metric space to a separable  Banach space.
 Then we apply the abstract result to  further prove the measurability of attractors 
 for set-valued random dynamical  systems when the 
 underlying   probability space is not complete.
 To derive our result,  we introduce the concept of
 weak upper semicontinuity and prove 
 such continuity is sufficient to  
 ensure the measurability of set-valued maps
 (see Definition \ref{wuc} and Theorem \ref{wusc}).
In the end, we will   prove  the  measurability  
of attractors
    of  the  non-autonomous set-valued random dynamical  system associated with
    the stochastic wave equations \eqref{intro1}-\eqref{intro2}
    regardless of  the completeness of the underlying probability space.
    It is worth noticing that our approach can be applied to a wide class of
    stochastic equations without uniqueness,  including those considered
    in \cite{car1, car4, car6,  car8, wangy1}.
  
  In addition to the non-Lipschitz continuity,
  in this paper,  the nonlinearity $f$ is also  allowed to have
  a growth order $\gamma =3$  when  $n=3$, which is referred
  to as the critical exponent in the literature.  In this case,  $f$
  maps 
  $\hone$ to $ \ltwo$ as a continuous function but not compact.
  This introduces another difficulty for proving the asymptotic
  compactness of solutions. The problem caused by the critical
  nonlinearity will be solved by the  method of energy equations,
  which was developed by Ball in \cite{bal1} for the  deterministic 
  wave  equations. Of course, for the  non-autonomous stochastic equation
  \eqref{intro1}, the energy equation is much more  involved
  because it contains deterministic non-autonomous terms as well as
  random terms (see  \eqref{ener1}).
  We will use the ergodicity of the 
  Ornstein-Uhlenbeck process to deal with the random terms and then
  employ the energy equation \eqref{ener1} to  derive the pullback
  asymptotic compactness of solutions in $\hone \times \ltwo$.
  
  The equation \eqref{intro1} is defined on the whole of $\R^n$,
  which introduces extra obstacle to prove the asymptotic compactness
  of solutions since 
  Sobolev embeddings are not compact on $\R^n$. This difficulty will be overcome
  by the uniform  estimates on the tails of solutions as in \cite{wan1, wan9}.

  The concept of pullback attractor
  for autonomous stochastic equations
   was developed  in \cite{cra2, fla1, schm1}, and
   then extensively studied  in  
    \cite{arn1, bat1, bat2, car1, car2, 
   car3, car4, car5, chu2, chu3, cra1, cra2, 
    fla1, gar1, gar2, gar3,  huang1, kloe1, lv1, schm1, shen1,  wan2, wan4}.
     This concept was extended to autonomous stochastic equations without uniqueness
     in \cite{car1, car4, car6, car7,  car8}, 
      to  single-valued non-autonomous 
      random systems   in \cite{dua3, wan5, wan9},
      and to set-valued  non-autonomous 
      equations  in \cite{car6, wangy1}.
      In the present paper, we investigate pullback attractors
      for the non-autonomous stochastic wave equations without uniqueness.
 The reader is referred to 
 \cite{arr1, bab1, bal1, chu1, fei1, fei3, fei4, 
  hal1,  kha1, pri1, pri2, pri3,  sel1, sun1, sun2, 
  tem1} for global attractors of deterministic wave equations.

        In the next section,  we 
        prove  a measurability result
        for set-valued functions from a metric
        space to a separable Banach space which
        can be used to  derive the measurability
        of solutions of stochastic equations without
        uniqueness and the measurability  of pullback attractor
        even if the underlying probability space is not complete.
        In  Section 3, we define a non-autonomous set-valued
        random dynamical system for \eqref{intro1}-\eqref{intro2}, and 
        prove the
          weak and strong continuity of solutions.
        Section 4 is devoted to the pullback asymptotic compactness
        of solutions which is proved  
        by the method  of   energy equations.
        In the last section, we present the existence and uniqueness
        of  $\cald$-pullback attractors    for \eqref{intro1}-\eqref{intro2}.

 In the sequel, 
 we write 
the norm and  inner product
of $\ltwo$ as  
$\| \cdot \|$ and $(\cdot, \cdot)$,  respectively. 
 The norm  of    a  Banach space  $X$
 is  denoted by     $\|\cdot\|_{X}$.

 \section{Multivalued non-autonomous random dynamical systems}
\setcounter{equation}{0}

The main purpose of this section is to prove a general measurability
result  for multivalued functions from a metric space to
a separable Banach space.  This result can be used
to establish the measurability of  random attractors for
multivalued random dynamical systems generated by a wide
class of stochastic differential equations without uniqueness,
which will be demonstrated by the non-autonomous stochastic
wave equations in this paper.
We will also review basic concepts  of multivalued non-autonomous
random  dynamical systems including  definition and existence of
random attractors, see e.g. \cite{car6, wangy1}.
 These concepts are natural extensions of
single-valued non-autonomous random dynamical systems
\cite{dua3, wan5, wan9}
and   multivalued autonomous   cocycles
\cite{car1, car4, car7, car8}.
The details on attractors of single-valued autonomous random
dynamical systems can be found in  
  \cite{bat1,
   cra2,  fla1,
  schm1} and the references therein.

In this section,  we  assume $X$  and $Z$
are  metric spaces, and their  Borel  $\sigma$-algebras are 
written  as $\calb(X)$  and $\calb(Z)$, respectively.
 The collection of all subsets of
  $Z$ is denoted by
 $2^Z$.  Recall that a mapping $G: X \to 2^Z$  
 is measurable  
 with respect to $\calb(X)$
 if the inverse image of any open subset of
$Z$ under $G$ is a Borel subset of $X$; that is,
for every open set $O$ of $Z$, 
 the set  $G^{-1}(O) = \{ x\in X: G(x) \bigcap O  \neq \emptyset  \}$
belongs to $\calb(X)$.
 A mapping   $G: X \to 2^Z$  is 
     said to be  upper semicontinuous    at $x\in X$
    if  for every neighborhood $\mathcal{U}$
     of  $G(x)$ there exists $\delta>0$ such that
     $G(x^\prime) \subseteq\mathcal{U}$
     whenever $d(x^\prime,x) <\delta$. 
   An obvious  sufficient criterion for  upper semicontinuity of such multivalued
   functions is  stated  below.
   
   \begin{lem}
   \label{uscc}
   Suppose  $X$ and $Z$ are  metric spaces.
    Let $G: X \to 2^Z$   be   a multivalued function
    and $x_0 \in X$.
    If  
for any $x_n \to x_0$ in $X$  and $z_n \in G(x_n)$, there
exist $z_0\in G(x_0)$ and a subsequence $\{z_{n_k}\}$
of $\{z_n\}$ such that
$z_{n_k}  \to z_0$ in $Z$, then
$G$ is upper semicontinuous at $x_0$.
     \end{lem}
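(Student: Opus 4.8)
The plan is to argue by contradiction against the definition of upper semicontinuity. Suppose $G$ fails to be upper semicontinuous at $x_0$. Then by definition there exists a neighborhood $\mathcal{U}$ of $G(x_0)$ for which no $\delta>0$ works; that is, for every $\delta>0$ there is a point $x'$ with $d(x',x_0)<\delta$ yet $G(x')\not\subseteq\mathcal{U}$. Applying this with $\delta=1/n$ for each $n\in\N$, I obtain a sequence $x_n\to x_0$ in $X$ together with points $z_n\in G(x_n)$ satisfying $z_n\notin\mathcal{U}$.

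Next I would feed this sequence into the hypothesis of the lemma. Since $x_n\to x_0$ and $z_n\in G(x_n)$, the assumption furnishes a point $z_0\in G(x_0)$ and a subsequence $\{z_{n_k}\}$ with $z_{n_k}\to z_0$ in $Z$. The point $z_0$ lies in $G(x_0)$, hence in the neighborhood $\mathcal{U}$ of $G(x_0)$. Because $\mathcal{U}$ is a neighborhood of $G(x_0)$ and $Z$ is a metric space, $\mathcal{U}$ contains an open set about $z_0$, and the convergence $z_{n_k}\to z_0$ forces $z_{n_k}\in\mathcal{U}$ for all sufficiently large $k$. This contradicts $z_{n_k}\notin\mathcal{U}$, which holds for every $k$ since each $z_{n_k}$ is a term of the original sequence $\{z_n\}$ that was chosen outside $\mathcal{U}$. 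The contradiction shows $G$ is upper semicontinuous at $x_0$.

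The only subtle point, and the step I would state carefully, is the passage from ``$\mathcal{U}$ is a neighborhood of the set $G(x_0)$'' to ``$\mathcal{U}$ contains an open ball around the particular point $z_0\in G(x_0)$.'' A neighborhood of a set need only contain an open set containing that set, so I should note that this open set is itself an open neighborhood of each of its points, in particular of $z_0$; then convergence in the metric space $Z$ applies at $z_0$. I expect no genuine obstacle here, since everything reduces to the elementary interplay between sequential convergence and open sets in a metric space; the main care is simply in handling the definition of a neighborhood of a set rather than of a point.
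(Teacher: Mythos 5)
Your proof is correct and follows essentially the same argument as the paper's: negate upper semicontinuity to extract $x_n \to x_0$ and $z_n \in G(x_n)$ with $z_n \notin \mathcal{U}$, apply the hypothesis to get a convergent subsequence $z_{n_k} \to z_0 \in G(x_0)$, and contradict $z_{n_k} \notin \mathcal{U}$ using that $\mathcal{U}$ contains an open set around $z_0$. Your extra care about the distinction between a neighborhood of a set and of a point is a fine clarification but does not change the argument.
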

     
     \begin{proof}
     If  $G$ is not upper semicontinuous at $x_0$, then
     there exist a neighborhood   $\mathcal{U}$ of $G(x_0)$,
     a  sequence $x_n \to x_0$ and $z_n \in G(x_n)$  such that
     $z_n \notin {\mathcal{U}}$.
    However,  by assumption, there exist
      $z_0\in G(x_0)$ and a subsequence $\{z_{n_k}\}$
of $\{z_n\}$ such that
$z_{n_k}  \to z_0$.
Since $z_0 \in G(x_0) $
and $\mathcal{U}$ is a neighborhood of $G(x_0)$, 
we infer that
$z_{n_k} \in {\mathcal{U}}$ for large $k$,
which is in contradiction with $z_{n_k} \notin {\mathcal{U}}$
for all $k\in \N$.
     \end{proof}
   
   Based on Lemma \ref{uscc}, 
   we introduce the  following weak upper semicontinuity of
multivalued functions,  which will be needed 
in   this  paper  when proving measurability of  pullback
random attractors.

\begin{defn}\label{wuc}
Let $X$ be a metric space and $Z$ a Banach space.
A  multivalued function   $G: X \to 2^Z$  is 
said to be 
    weakly  upper semicontinuous  at $x_0 \in X$  if 
for any $x_n \to x_0$ in $X$  and $z_n \in G(x_n)$, there
exist $z_0\in G(x_0)$ and a subsequence $\{z_{n_k}\}$
of $\{z_n\}$ such that
$z_{n_k}  \rightharpoonup z_0$ weakly in $Z$.
If $G$ is weakly upper semicontinuous 
at every $x \in X$, then   we say  $G: X \to 2^Z$  is 
    weakly  upper semicontinuous.
\end{defn}

We now prove a  measurability result  for 
weakly  upper semicontinuous   
  multivalued functions.
  
  \begin{thm}
  \label{wusc}
  Let $X$ be a metric space and $Z$ a separable Banach space.
  If $G: X \to 2^Z$ is  weakly  upper semicontinuous, then
  $G$ is  measurable with respect to $\calb(X)$.
  \end{thm}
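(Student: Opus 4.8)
The plan is to verify the definition of measurability directly, i.e. to show that $G^{-1}(O)=\{x\in X:G(x)\cap O\neq\emptyset\}$ belongs to $\calb(X)$ for every open set $O\subseteq Z$. The overall strategy is to reduce this to preimages of closed balls, for which weak upper semicontinuity together with the weak lower semicontinuity of the norm forces closedness, and then to reassemble $O$ out of balls using the separability of $Z$.

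First I would reduce to balls. Since $Z$ is separable it is second countable, so every open set can be written as a countable union $O=\bigcup_{j}B_j$ of open balls, whence $G^{-1}(O)=\bigcup_{j}G^{-1}(B_j)$. A countable union of Borel sets is Borel, so it suffices to handle a single open ball $B=B(c,r)$. I would then express this open ball as an increasing countable union of closed balls, $B(c,r)=\bigcup_{k}\overline{B}(c,r_k)$ with $r_k\uparrow r$, so that $G^{-1}(B)=\bigcup_{k}G^{-1}(\overline{B}(c,r_k))$. Consequently it is enough to prove that $G^{-1}(\overline{B})$ is closed, hence Borel, for every closed ball $\overline{B}$; then $G^{-1}(B)$ is an $F_\sigma$ set and $G^{-1}(O)$ is Borel.

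The heart of the argument is the closedness of $G^{-1}(\overline{B})$ for $\overline{B}=\overline{B}(c,\rho)$. I would take $x_n\in G^{-1}(\overline{B})$ with $x_n\to x_0$ in $X$, select $z_n\in G(x_n)\cap\overline{B}$ (so that $\|z_n-c\|\le\rho$), and invoke weak upper semicontinuity at $x_0$ to obtain $z_0\in G(x_0)$ and a subsequence with $z_{n_k}\rightharpoonup z_0$ weakly in $Z$. Because the norm of a Banach space is weakly sequentially lower semicontinuous, a consequence of the Hahn--Banach theorem, it follows that $\|z_0-c\|\le\liminf_{k}\|z_{n_k}-c\|\le\rho$, so $z_0\in\overline{B}$. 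Thus $z_0\in G(x_0)\cap\overline{B}$, i.e. $x_0\in G^{-1}(\overline{B})$, which shows $G^{-1}(\overline{B})$ is closed.

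The main obstacle is precisely the mismatch between the weak convergence supplied by the hypothesis and the norm-topological description of open sets in the measurability criterion: weak upper semicontinuity cannot control membership in an arbitrary norm-closed set, since such sets need not be weakly sequentially closed. The device that overcomes this is to target only closed balls (or, more generally, closed convex sets, which are weakly closed by Mazur's theorem), whose defining inequalities are preserved under weak limits via lower semicontinuity of the norm. Everything else, namely the reduction to balls and the $F_\sigma$ reassembly, uses only the separability of $Z$ and elementary manipulations of preimages.
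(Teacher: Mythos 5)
Your proposal is correct and follows essentially the same route as the paper's proof: the inverse image of a closed ball is shown to be closed using weak upper semicontinuity together with the weak sequential lower semicontinuity of the norm, an open ball is then written as a countable increasing union of closed balls, and finally separability is used to express an arbitrary open set as a countable union of open balls. The only difference is the order of presentation (you state the reductions first and the closedness argument last, while the paper proves closedness first), which is immaterial.
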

  
  \begin{proof}
  Given $r>0$ and $z_0 \in Z$,  let
  ${\overline{B}} 
  =\{z\in Z:  \| z-z_0 \|_Z \le r\}$ be the  closed ball
  in $Z$ with radius $r$ and center $z_0$.
  Then we claim that  the inverse image of 
  $ {\overline{B}} $ under $G$,
   $G^{-1} ( {\overline{B}} )$,
    is  a closed subset
  of $X$. Let $x_n \in G^{-1}( {\overline{B}} )$ and $x\in X$
  such that $x_n \to  x$. We will show $x\in G^{-1}( {\overline{B}} )$.
  Since $x_n \in G^{-1}( {\overline{B}} )$, we have
  $G(x_n)\bigcap {\overline{B}} \neq \emptyset$ and
  thus  there exists $z_n \in  G(x_n)\bigcap {\overline{B}}$
  for each $n\in \N$.
  Since $x_n \to x$  and  $G$ is  weakly  upper semicontinuous, 
  we find that there exist  $z\in G(x)$ and
  a subsequence $\{z_{n_k}\}$ of $\{z_n\}$ such that
 $z_{n_k} \rightharpoonup z $ in $Z$, which implies
  $z_{n_k} -z_0  \rightharpoonup z -z_0 $ and thus
   \be\label{wuscp1}
\liminf_{k \to \infty}
\|z_{n_k} -z_0 \|_Z  \ge \| z-z_0 \|_Z.
  \ee
  Since $z_{n_k} \in {\overline{B}} $,
  by \eqref{wuscp1} we get
  $\|z-z_0\|_Z \le  r$ and hence
  $z\in {\overline{B}}$. On the other hand, we know
  $z\in G(x)$ and thus
  $z\in G(x) \bigcap {\overline{B}} $, 
  which means $x\in G^{-1} ({\overline{B}}  )$ as desired.
  So we have proved that the inverse image of every closed ball
  in $Z$ 
  under $G$ is  a closed set of $X$.
  
  Let $B = \{z\in Z: \|z-\widetilde{z}\|_Z <\widetilde{r} \}$  be the open ball in $Z$
  with radius $\widetilde{r}$ and center $\widetilde{z}$.
  Then we have  $B =\bigcup\limits_{m=1}^\infty
  {\overline{B}}_m$ with
  ${\overline{B}}_m =\{ z\in Z: \| z-\widetilde{z}\|_Z \le \widetilde{r} -{\frac 1m}\}$
  and hence
 \be\label{wuscp2}
 G^{-1} (B) = \bigcup\limits_{m=1}^\infty
  G^{-1}({\overline{B}}_m).
  \ee
  As proved in the above, 
  for each $m \in \N$, 
  $G^{-1}({\overline{B}}_m)$ is closed in $X$ and thus
  $G^{-1}({\overline{B}}_m)\in \calb(X)$.
  This along with \eqref{wuscp2} shows that
  \be\label{wuscp3}
  G^{-1} (B) \in \calb(X) \ \mbox{for every open ball} \   B  \mbox{ in  }  Z.
  \ee
  Note that $Z$ is separable and  hence  there exists
  $M=\{z_m: m \in \N$ \} such that $M$ is dense in $Z$.
  Thus,  for every open set $O$ in $Z$,  there exist
  $M_0 = \{z_{m_p}: p \in \N\} \subseteq M$ 
  and a sequence of rational numbers $\{r_{m_p}\}_{p=1}^\infty$
  such that
  $O= \bigcup\limits_{p=1}^\infty B_p$ where
  $B_p = \{z\in Z: \|z-z_{m_p} \|_Z < r_{m_{p}}\}$
  is the open ball with radius $r_{m_p}$ and center
  $z_{m_p}$.
  Therefore, $G^{-1} (O)
  =   \bigcup\limits_{p=1}^\infty
  G^{-1}(B_p)$, which together with
  \eqref{wuscp3}  implies
  $G^{-1} (O) \in \calb (X)$ for every open set $O$ in $Z$.
  \end{proof}

In what follows,  we introduce the concept of
multivalued non-autonomous cocycles (random dynamical systems).
As in the  single-valued case, we need to use two distinct 
parametric spaces, say $\Omega_1$ and $\Omega_2$,
to deal with the non-autonomous perturbations of systems:
$\Omega_1$ is for the non-autonomous deterministic perturbation
and $\Omega_2$  for the non-autonomous stochastic perturbation.
Usually, we may   take $\Omega_1$  either as the collection of all translations
of   deterministic external terms or as the collection of all initial times.
In this paper, we will use the latter and hence take $\Omega_1 = \R$.
However, we emphasize that all results can be carried over
in an obvious way  to the case
when $\Omega_1$ is the collection of translations of external terms.
For stochastic perturbation, we take $\Omega_2$  as
a metric dynamical system 
$(\Omega, \calftwo, P,  \{\thtwot\}_{t \in \R})$
where  $(\Omega, \calftwo, P)$
 is a probability space and  $\theta: \R \times \Omega \to \Omega$
 is a measure-preserving  group of translations on $\Omega$.

 Again, we assume that  $(X, d)$
 is a metric space and the Hausdorff semi-distance
 between subsets $A$ and $B$ is written as  $d(A,B)$.

\begin{defn} \label{cocy}
A  multivalued mapping $\Phi$: $ \R^+ \times \R \times \Omega \times X
\to 2^X$ with nonempty  closed images
  is called a multivalued non-autonomous   cocycle on $X$
over  
$(\Omega, \calftwo, P,  \{\thtwot\}_{t \in \R})$
if   for all
  $\tau\in \R$,
  $\omega \in   \Omega $
  and    $t, s \in \R^+$,  the following conditions (i)-(iii)  are satisfied:
\begin{itemize}
\item [(i)]   $\Phi (\cdot, \tau, \cdot, \cdot): \R ^+ \times \Omega \times X
\to 2^X$ is
 $\calb (\R^+)   \times \calftwo \times \calb (X)$-measurable;
 that is, for every open set $O$ of $X$, the set
 $\{(t, \omega, x) \in \R^+ \times \Omega \times X:
 \Phi(t,\tau, \omega, x) \bigcap O
 \neq \emptyset \}$ belongs to
  $\calb (\R^+)   \times \calftwo \times \calb (X)$.

\item[(ii)]    $\Phi(0, \tau, \omega, \cdot) $ is the identity on $X$;

\item[(iii)]    $\Phi(t+ s, \tau, \omega, \cdot) =
 \Phi(t,  \tau +s,  \theta_{s} \omega, \cdot)
 \circ \Phi(s, \tau, \omega, \cdot)$.
    \end{itemize}
    If   there exists  a
    positive number   $T $ such that
    for every $t  \in \R^+$,
     $\tau \in \R$  and $\omega \in \Omega$,
$$
\Phi(t,  \tau +T, \omega, \cdot)
= \Phi(t, \tau,  \omega, \cdot ),
$$
then $\Phi$ is said to be a 
  periodic  cocycle    with period $T$.
\end{defn}
 
 As mentioned earlier,  it is much more difficult to
 obtain measurability of multivalued functions than
 singled-valued ones.  Theorem \ref{wusc} provides an avenue
 to establish the measurability of multivalued cocycles under certain
 conditions. More precisely, we have
 
 \begin{lem}
 \label{mcocy}
  Let  
  $\Phi$: $ \R^+ \times \R \times \Omega \times X
\to 2^X$ be    a  multivalued   function. Suppose
$\Omega$ is a metric space and $X$ a separable Banach
space. If  for  every $\tau \in \R$,   the mapping
     $\Phi (\cdot, \tau, \cdot, \cdot): \R ^+ \times \Omega \times X
\to 2^X$ is weakly upper semicontinuous in the sense of Definition
\ref{wuc}, then
  $\Phi (\cdot, \tau, \cdot, \cdot): \R ^+ \times \Omega \times X
\to 2^X$  is
$\calb (\R^+)   \times \calb(\Omega) \times \calb (X)$-measurable.
\end{lem}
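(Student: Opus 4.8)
The plan is to reduce Lemma \ref{mcocy} directly to Theorem \ref{wusc}. The key observation is that the hypothesis of Lemma \ref{mcocy} fixes $\tau \in \R$ and asserts that the map
$$
\Phi(\cdot, \tau, \cdot, \cdot): \R^+ \times \Omega \times X \to 2^X
$$
is weakly upper semicontinuous in the sense of Definition \ref{wuc}. Theorem \ref{wusc} says precisely that a weakly upper semicontinuous multivalued map from a metric space into a separable Banach space is measurable with respect to the Borel $\sigma$-algebra of the domain. So the essential task is to recognize the domain $\R^+ \times \Omega \times X$ as a metric space and to identify its Borel $\sigma$-algebra with the product $\sigma$-algebra appearing in the conclusion.

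First I would set $Y = \R^+ \times \Omega \times X$ and equip it with the product (e.g.\ sum or max) metric coming from the usual metric on $\R^+$, the given metric on $\Omega$, and the norm metric on $X$. With this metric $Y$ is a metric space, so Theorem \ref{wusc} applies with the domain metric space taken to be $Y$ and the codomain separable Banach space taken to be $X$. Since $\Phi(\cdot, \tau, \cdot, \cdot)$ is assumed weakly upper semicontinuous, Theorem \ref{wusc} immediately yields that this map is measurable with respect to $\calb(Y)$, the Borel $\sigma$-algebra of $Y$.

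It remains to pass from $\calb(Y)$ to the product $\sigma$-algebra $\calb(\R^+) \times \calb(\Omega) \times \calb(X)$. This is the one nonroutine point, and it is where separability is used a second time. In general the Borel $\sigma$-algebra of a product of metric spaces only contains the product of the Borel $\sigma$-algebras; the reverse inclusion can fail. However, when each factor is separable (and hence second countable), the product metric space is itself separable, and a standard fact asserts that for second-countable spaces the Borel $\sigma$-algebra of the product coincides with the product of the Borel $\sigma$-algebras, i.e.
$$
\calb(\R^+ \times \Omega \times X) = \calb(\R^+) \times \calb(\Omega) \times \calb(X).
$$
Here $\R^+$ is separable, $X$ is separable by hypothesis, and one needs $\Omega$ to be separable as well; since $\Omega$ is assumed only to be a metric space, I would either note that separability of $\Omega$ is implicitly required (or follows from the ambient setting) or argue more carefully using a countable base. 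Given this identification, measurability with respect to $\calb(Y)$ is exactly measurability with respect to $\calb(\R^+) \times \calb(\Omega) \times \calb(X)$, which completes the proof.

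The main obstacle is thus not the continuity-to-measurability step—that is handed to us by Theorem \ref{wusc}—but the reconciliation of the Borel $\sigma$-algebra of the product with the product of the Borel $\sigma$-algebras. I expect the proof to hinge on invoking second countability (equivalently, separability) of all three factors so that this identification is legitimate; this is precisely why the separability of the Banach space $X$, already needed for Theorem \ref{wusc}, reappears here.
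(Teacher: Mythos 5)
Your proposal is correct and takes essentially the same route as the paper: the paper's entire proof of Lemma \ref{mcocy} is the single sentence that the result ``follows from Theorem \ref{wusc} immediately since $\R^+ \times \Omega \times X$ is a metric space in the present case.'' What you add, and what the paper silently elides, is your second step: identifying the Borel $\sigma$-algebra $\calb(\R^+ \times \Omega \times X)$ of the product metric space (which is what Theorem \ref{wusc} actually delivers) with the product $\sigma$-algebra $\calb(\R^+) \times \calb(\Omega) \times \calb(X)$ (which is what the conclusion asserts). You are right that these agree in general only under a countability hypothesis: the product $\sigma$-algebra is always contained in the Borel $\sigma$-algebra of the product, and the reverse inclusion holds when every factor is second countable, i.e.\ separable as a metric space. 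Since the lemma assumes only that $\Omega$ is a metric space, this identification is not automatic as stated; one must either read $\calb(\R^+) \times \calb(\Omega) \times \calb(X)$ as shorthand for $\calb(\R^+ \times \Omega \times X)$, or add separability of $\Omega$ as a hypothesis. Note also that re-running the proof of Theorem \ref{wusc} directly on the product does not evade the issue, since that proof produces closed sets of the product space, and a closed set need not belong to the product $\sigma$-algebra without separability. In the paper's application the point is harmless: the lemma is invoked (in Lemma \ref{meuz}) with $\Omega$ replaced by the sets $\Omega_m$, which are subsets of the Wiener space $C(\R,\R)$ with the compact-open topology, a Polish space, so each $\Omega_m$ is separable and your identification is legitimate there. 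In short, your proof is if anything more careful than the paper's own; the hedge in your final step is an honest acknowledgment of a gap that the paper's one-line proof does not mention.
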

 
 \begin{proof}
  This  result   follows   from Theorem \ref{wusc} immediately since
   $\R^+   \times\Omega \times X$
  is a metric space  in the  present case.
 \end{proof}

 From now on, we assume  
  $\Phi$: $ \R^+ \times \R \times \Omega \times X
\to 2^X$ is   a multivalued non-autonomous   cocycle on $X$
over  
$(\Omega, \calftwo, P,  \{\thtwot\}_{t \in \R})$.

\begin{defn}
\label{omlit}
Let $B=\{B(\tau, \omega): \tau \in \R, \ \omega  \in \Omega\}$
be a family of nonempty subsets of $X$.
Then  the $\Omega$-limit set of $B$ under $\Phi$, 
  $\Omega(B)$, is given by 
$$
\Omega (B)(\tau, \omega)
= \bigcap_{r \ge 0}
\  \overline{ \bigcup_{t\ge  r} \Phi(t,  \tau -t,
 \theta_{ -t} \omega,
 B( \tau -t , \theta_{-t}\omega  ))}
$$
for every $\tau \in \R$ and
$\omega \in \Omega$.
 \end{defn}

\begin{defn}
\label{asycomp}
 Let $\cald$ be  a  collection of    families of  nonempty
 subsets of $X$.
$\Phi$ is said to be  $\cald$-pullback asymptotically
compact in $X$ if
for all $\tau \in \R$,
$\omega \in \Omega$   and $D\in \cald$,    every sequence
$ x_n  \in \Phi(t_n,    \tau -t_n, \theta_{ -t_n} \omega,
   D(\tau -t_n,
  \theta_{ -t_n} \omega ) )  $
    has a convergent  subsequence  in    $X$
 whenever
  $t_n \to \infty$.
\end{defn}

\begin{defn}
\label{defatt}
 Let $\cald$ be a collection of   families of
 nonempty  subsets of $X$
 and
 $\cala = \{\cala (\tau, \omega): \tau \in \R,
  \omega \in \Omega \} \in \cald $.
    Such   $\cala$
is called a    $\cald$-pullback    attractor  for
  $\Phi$
if the following  conditions (i)-(iii) are  satisfied:
for every $\tau \in \R$  and $\omega \in \Omega$,
\begin{itemize}
\item [(i)]   $\cala(\tau, \cdot): \Omega \to  2^X$ is measurable
with respect to $\calf$  
  and
 $\cala(\tau, \omega)$ is compact.

\item[(ii)]   $\cala$  is invariant:  
$  \Phi(t, \tau, \omega, \cala(\tau, \omega)   )
= \cala ( \tau +t, \theta_{t} \omega
), \ \  \forall \   t \ge 0.
$ 

\item[(iii)]   $\cala  $
attracts  every  member   of   $\cald$:  for every
 $D = \{D(\tau, \omega): \tau \in \R, \omega \in \Omega\}
 \in \cald$,
$$ \lim_{t \to  \infty} d (\Phi(t, \tau -t,
 \theta_{-t}\omega, D(\tau -t,
 \theta_{-t}\omega) ) , \cala (\tau, \omega ))=0.
$$
 \end{itemize}
 If, in addition, there exists $T>0$ such that
 $$
 \cala(\tau +T, \omega) = \cala(\tau,    \omega ),
 \quad \forall \  \tau \in \R, \forall \
  \omega \in \Omega,
 $$
 then  $\cala$ is said to be  periodic with period $T$.
\end{defn}

We remark that the $\cald$-pullback attractor
$\cala$ in Definition \ref{defatt} for a multivalued
cocycle   is required to
be measurable with respect to $\calf$
rather than its completion, which is  the same
as  for single-valued cocycles.
In the literature, the attractors for     multivalued 
stochastic equations 
are often assumed to be measurable with respect
to the completion of $\calf$ (not  $\calf$ itself),  
due to the difficulty  to 
  obtain the $\calf$-measurability.
In this paper, we ill  demonstrate  how to 
employ Theorem \ref{wusc} to establish the measurability
of attractors  with respect to   $\calf$.

 As for single-valued cocycles,  a family  
$K=\{K(\tau, \omega): \tau
\in \R, \ \omega  \in \Omega\} \in \mathcal{D}$ 
   is called a  $\cald$-pullback
 absorbing
set for   $\Phi$   if
for all $\tau \in \R$,
$\omega \in \Omega $
and  for every $D \in \cald$,
 there exists $T= T(D, \tau, \omega)>0$ such
that
$$
\Phi(t,  \tau -t, \theta_{ -t} \omega,
D( \tau -t, \theta_{-t} \omega  ))
 \subseteq  K(\tau, \omega)
\quad \mbox{for all} \ t \ge T.
$$
 
For $\Omega$-limit sets of closed 
$\cald$-pullback absorbing sets
of $\Phi$, we have the following result
which is  an extension of single-valued
non-autonomous cocycles and multivalued
autonomous cocycles.

\begin{lem}
\label{olim}
 Let $\cald$ be  an inclusion-closed
 collection of   families of   nonempty subsets of
$X$,  and $\Phi$  be a multivalued
non-autonomous   cocycle on $X$
over $(\Omega, \calftwo, P, \{\thtwot\}_{t \in \R})$.
Suppose 
$\Phi$ is $\cald$-pullback asymptotically
compact in $X$ and the mapping
$\Phi(t, \tau, \omega, \cdot):
X\to 2^X$ is upper semicontinuous
for each $t\in \R^+$, $\tau \in \R$
and $\omega \in \Omega$. If $K\in \cald$ is a   
closed 
     $\cald$-pullback absorbing set
  of $\Phi$,  then 
  the $\Omega$-limit set $\Omega(K)$
   has the properties:
  for every $\tau \in \R$  and $\omega \in \Omega$,
\begin{itemize}
\item [(i)]   $\Omega(K)\in \cald$
and 
 $\Omega(K)(\tau, \omega)$ is compact.

\item[(ii)]   $\Omega(K)$  is invariant:  
$  \Phi(t, \tau, \omega, \Omega(K)(\tau, \omega)   )
= \Omega(K) ( \tau +t, \theta_{t} \omega
), \ \  \forall \   t \ge 0.
$ 

\item[(iii)]   $\Omega(K) $
attracts  every  member   of   $\cald$:  for every
 $D 
 \in \cald$,
$$ \lim_{t \to  \infty} d (\Phi(t, \tau -t,
 \theta_{-t}\omega, D(\tau -t,
 \theta_{-t}\omega) ) ,  \Omega(K) (\tau, \omega ))=0.
$$
 \end{itemize}
 If, in addition, 
 both $\Phi$  and $K$ are periodic with period $T$, then so is
 $\Omega(K)$; that is, 
 $  \Omega(K)(\tau +T, \omega) =  \Omega(K)(\tau,    \omega )$
  for all $ \tau \in \R$ and $
  \omega \in \Omega$.
\end{lem}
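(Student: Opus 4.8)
The plan is to run the classical $\Omega$-limit set argument while keeping careful track of which hypothesis does what: $\cald$-pullback asymptotic compactness supplies nonemptiness, compactness and one half of invariance; the absorption property supplies attraction; and upper semicontinuity enters only through the closed-graph property it forces on each single map $\Phi(t,\tau,\omega,\cdot)$. I would first record the sequential description: $y\in\Omega(K)(\tau,\omega)$ if and only if there exist $t_n\to\infty$ and $x_n\in\Phi(t_n,\tau-t_n,\theta_{-t_n}\omega,K(\tau-t_n,\theta_{-t_n}\omega))$ with $x_n\to y$, which follows from Definition \ref{omlit} by extracting, for each radius $r=m$, a tail point within $1/m$ of $y$. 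Nonemptiness then comes from choosing any $x_n$ in the nonempty $n$-th tail and applying asymptotic compactness (legitimate since $K\in\cald$); the subsequential limit lies in $\Omega(K)(\tau,\omega)$ by the characterization. Compactness follows because $\Omega(K)(\tau,\omega)$ is closed (an intersection of closed sets) and, given any $y_m\in\Omega(K)(\tau,\omega)$, choosing $w_m$ in a tail of time $\ge m$ with $d(w_m,y_m)<1/m$ and extracting a convergent subsequence of $\{w_m\}$ forces $\{y_m\}$ to converge along the same indices to a limit in the closed set $\Omega(K)(\tau,\omega)$. Finally, since $K$ is closed and absorbing, the tails eventually lie in $K(\tau,\omega)$, so $\Omega(K)(\tau,\omega)\subseteq K(\tau,\omega)$; inclusion-closedness of $\cald$ with $K\in\cald$ then yields $\Omega(K)\in\cald$, settling (i).

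For attraction (iii) I would argue by contradiction: if it fails there are $\varepsilon>0$, $t_n\to\infty$ and $x_n\in\Phi(t_n,\tau-t_n,\theta_{-t_n}\omega,D(\tau-t_n,\theta_{-t_n}\omega))$ with $d(x_n,\Omega(K)(\tau,\omega))\ge\varepsilon$. Asymptotic compactness gives $x_{n_k}\to y$, and it suffices to show $y\in\Omega(K)(\tau,\omega)$. Fix $r\ge0$ and split the cocycle at the \emph{outer} time $r$, namely $\Phi(t_n,\tau-t_n,\theta_{-t_n}\omega,\cdot)=\Phi(r,\tau-r,\theta_{-r}\omega,\cdot)\circ\Phi(t_n-r,\tau-t_n,\theta_{-t_n}\omega,\cdot)$. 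For $n$ large the inner factor absorbs $D(\tau-t_n,\theta_{-t_n}\omega)$ into $K(\tau-r,\theta_{-r}\omega)$, so $x_n\in\Phi(r,\tau-r,\theta_{-r}\omega,K(\tau-r,\theta_{-r}\omega))$, which is exactly the $t=r$ term of $\bigcup_{t\ge r}\Phi(t,\tau-t,\theta_{-t}\omega,K(\tau-t,\theta_{-t}\omega))$. Letting $k\to\infty$ puts $y$ in its closure, and since $r$ was arbitrary, $y\in\Omega(K)(\tau,\omega)$, a contradiction. The only real trick here is that splitting at outer time exactly $r$ lands the point in the $t=r$ slot of the defining union.

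For invariance (ii) I would first note that upper semicontinuity together with closed images yields the closed-graph property: if $\xi_n\to\xi$, $y_n\in\Phi(t,\tau,\omega,\xi_n)$ and $y_n\to y$, then $y\in\Phi(t,\tau,\omega,\xi)$ (if not, separate $y$ from the closed image $\Phi(t,\tau,\omega,\xi)$ by disjoint neighborhoods and invoke the definition of upper semicontinuity). The inclusion $\Omega(K)(\tau+t,\theta_t\omega)\subseteq\Phi(t,\tau,\omega,\Omega(K)(\tau,\omega))$ is then clean: take $y=\lim y_n$ with $y_n$ in the $\sigma_n$-tail at $(\tau+t,\theta_t\omega)$, split off the last time $t$ to write $y_n\in\Phi(t,\tau,\omega,\xi_n)$ with $\xi_n\in\Phi(\sigma_n-t,\tau-(\sigma_n-t),\theta_{-(\sigma_n-t)}\omega,K(\tau-(\sigma_n-t),\theta_{-(\sigma_n-t)}\omega))$, extract $\xi_{n_k}\to\xi\in\Omega(K)(\tau,\omega)$ by asymptotic compactness, and conclude $y\in\Phi(t,\tau,\omega,\xi)$ from the closed graph. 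This half uses only upper semicontinuity and asymptotic compactness.

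The main obstacle is the reverse inclusion $\Phi(t,\tau,\omega,\Omega(K)(\tau,\omega))\subseteq\Omega(K)(\tau+t,\theta_t\omega)$. Here one is given $y\in\Phi(t,\tau,\omega,\xi)$ with $\xi=\lim\eta_n$, $\eta_n$ in a long $K$-tail, and must exhibit $y$ as a limit of $K$-evolutions based at $(\tau+t,\theta_t\omega)$. The natural route is to find $w_n\in\Phi(t,\tau,\omega,\eta_n)$ with $w_n\to y$ and use the cocycle identity $\Phi(t,\tau,\omega,\Phi(\sigma_n,\tau-\sigma_n,\theta_{-\sigma_n}\omega,K))=\Phi(t+\sigma_n,\tau-\sigma_n,\theta_{-\sigma_n}\omega,K)$; but upper semicontinuity controls the images $\Phi(t,\tau,\omega,\eta_n)$ only from the outside (they sit near $\Phi(t,\tau,\omega,\xi)$) and so does not by itself produce approximants of the \emph{specific} point $y$. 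I expect this is precisely where the concrete wave-equation cocycle is needed: the strong continuity of solutions proved in Section~3 supplies the missing lower-semicontinuity--type control, namely that each $y\in\Phi(t,\tau,\omega,\xi)$ is a strong limit of values $w_n\in\Phi(t,\tau,\omega,\eta_n)$, after which asymptotic compactness and the cocycle identity place $y$ in $\Omega(K)(\tau+t,\theta_t\omega)$. Granting this, the two inclusions give (ii). Periodicity of $\Omega(K)$ is then immediate: substituting $\Phi(t,\sigma+T,\cdot,\cdot)=\Phi(t,\sigma,\cdot,\cdot)$ and $K(\sigma+T,\cdot)=K(\sigma,\cdot)$ into the defining formula for $\Omega(K)(\tau+T,\omega)$ reproduces $\Omega(K)(\tau,\omega)$ termwise.
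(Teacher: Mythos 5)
Your arguments for (i), (iii), the inclusion $\Omega(K)(\tau+t,\theta_t\omega)\subseteq\Phi(t,\tau,\omega,\Omega(K)(\tau,\omega))$, and the periodicity statement are correct, and they follow the same route as the references the paper cites for this lemma (\cite{car1}, \cite{wan5}). The genuine gap is in part (ii): you do not prove the forward inclusion $\Phi(t,\tau,\omega,\Omega(K)(\tau,\omega))\subseteq\Omega(K)(\tau+t,\theta_t\omega)$, and your diagnosis of why it is hard is mistaken. Since Lemma \ref{olim} is a statement about an \emph{arbitrary} multivalued cocycle, you are not allowed to reach for the strong continuity of the concrete wave-equation solutions from Section 3; and, more importantly, no lower-semicontinuity-type control is needed at all. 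The missing ingredient is that Definition \ref{cocy}(iii) is an \emph{equality} (the cocycle is strict), which lets you convert negative invariance plus attraction into the forward inclusion.

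Here is the abstract argument that closes your gap. Let $y\in\Phi(t,\tau,\omega,\xi)$ with $\xi\in\Omega(K)(\tau,\omega)$. Your negative-invariance inclusion, applied at the base point $(\tau,\omega)=((\tau-s)+s,\theta_s(\theta_{-s}\omega))$, gives $\xi\in\Phi(s,\tau-s,\theta_{-s}\omega,\Omega(K)(\tau-s,\theta_{-s}\omega))$ for every $s\ge 0$. By the cocycle equality, $\Phi(t,\tau,\omega,\cdot)\circ\Phi(s,\tau-s,\theta_{-s}\omega,\cdot)=\Phi(t+s,\tau-s,\theta_{-s}\omega,\cdot)$, so $y\in\Phi(t+s,\tau-s,\theta_{-s}\omega,\Omega(K)(\tau-s,\theta_{-s}\omega))$ for every $s\ge 0$. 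Writing $r=t+s$ and observing that $\tau-s=(\tau+t)-r$ and $\theta_{-s}\omega=\theta_{-r}(\theta_t\omega)$, this says that for all $r\ge t$,
$$
y\in\Phi\bigl(r,(\tau+t)-r,\theta_{-r}(\theta_t\omega),\Omega(K)((\tau+t)-r,\theta_{-r}(\theta_t\omega))\bigr).
$$
You have already shown, independently of invariance, that $\Omega(K)\in\cald$ (part (i)) and that $\Omega(K)$ attracts every member of $\cald$ (part (iii)); applying (iii) to the family $D=\Omega(K)$ at the base point $(\tau+t,\theta_t\omega)$ and letting $r\to\infty$ yields $d(y,\Omega(K)(\tau+t,\theta_t\omega))=0$, and compactness of $\Omega(K)(\tau+t,\theta_t\omega)$ then forces $y\in\Omega(K)(\tau+t,\theta_t\omega)$. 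There is no circularity, since your proofs of (i) and (iii) nowhere use (ii). This is exactly the mechanism used for strict multivalued (co)cycles in the cited literature; with it your proposal is complete, and without it part (ii) remains unproved.
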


\begin{proof}
The proof is just an obvious  combination
of \cite{car1} for multivalued autonomous cocycles
 and \cite{wan5} for single-valued non-autonomous
 cocycles.  The details will not be repeated here again.
\end{proof}

 Next, we prove the measurability of $\Omega$-limit sets
 of $\cald$-pullback absorbing sets.
 To that end,  we   need the following lemma.

\begin{lem}
\label{fmea1}
If  $E_n : \Omega \to 2^X$
is an $\calf$-measurable
set-valued  mapping for every $n\in \N$, then
 the mapping  $\omega \to
  \bigcup\limits_{n=1}^\infty E_n  (\omega) $
  is  $\calf$-measurable, and so is the mapping
  $\omega \to
  \overline{\bigcup\limits_{n=1}^\infty E_n (\omega)} $.
\end{lem}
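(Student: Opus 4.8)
The plan is to argue directly from the definition of measurability for set-valued maps recalled at the start of this section: a mapping into $2^X$ is $\calf$-measurable exactly when the preimage of every open subset of $X$ belongs to $\calf$. Write $E(\omega) = \bigcup_{n=1}^\infty E_n(\omega)$ and fix an arbitrary open set $O \subseteq X$. The first thing I would observe is that $E(\omega) \cap O \neq \emptyset$ holds if and only if $E_n(\omega) \cap O \neq \emptyset$ for at least one $n$, simply because intersection distributes over unions. This turns the preimage of $O$ under $E$ into a countable union of preimages under the individual maps, $E^{-1}(O) = \bigcup_{n=1}^\infty E_n^{-1}(O)$. Since each $E_n$ is $\calf$-measurable by hypothesis, every $E_n^{-1}(O)$ lies in $\calf$, and because $\calf$ is a $\sigma$-algebra it is closed under countable unions; hence $E^{-1}(O) \in \calf$. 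As $O$ was an arbitrary open set, this establishes the measurability of the union.

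For the closure $\overline{E}(\omega) = \overline{\bigcup_{n=1}^\infty E_n(\omega)}$, the key point---and really the only step requiring any care---is that passing to the closure does not change which open sets are met. Indeed, for any set $A \subseteq X$ and any open set $O$ one has $\overline{A} \cap O \neq \emptyset$ if and only if $A \cap O \neq \emptyset$: the forward implication holds because a point of $\overline{A} \cap O$ has $O$ as an open neighborhood, which must therefore contain a point of $A$, while the reverse is immediate from $A \subseteq \overline{A}$. Applying this with $A = E(\omega)$ yields $\{\omega : \overline{E}(\omega) \cap O \neq \emptyset\} = \{\omega : E(\omega) \cap O \neq \emptyset\} = E^{-1}(O)$ for every open $O$. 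Since the right-hand side was just shown to belong to $\calf$, the mapping $\omega \mapsto \overline{E}(\omega)$ is $\calf$-measurable as well.

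I do not anticipate a genuine obstacle here: the whole argument is set-theoretic and relies only on the $\sigma$-algebra axioms together with the open-preimage definition of measurability. The one fact worth isolating is the closure-invariance of open-set intersection, which is precisely what allows the second assertion to be reduced to the first rather than proved anew.
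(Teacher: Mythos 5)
Your proposal is correct and follows essentially the same argument as the paper: the identity $\{\omega : \overline{\bigcup_n E_n(\omega)} \cap O \neq \emptyset\} = \{\omega : \bigcup_n E_n(\omega) \cap O \neq \emptyset\} = \bigcup_n E_n^{-1}(O)$, using that an open set meets a set iff it meets its closure, together with closure of $\calf$ under countable unions. The only difference is presentational — the paper writes both reductions as a single chain of equalities, while you separate the union step from the closure step.
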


\begin{proof}
For every open set $O$ in $X$,  we have
$$
\left \{ \omega\in \Omega: 
  (\overline{\bigcup\limits_{n=1}^\infty E_n (\omega)} 
  \bigcap O) \neq \emptyset \right  \}
  =
  \left \{ \omega\in \Omega: 
  ({\bigcup\limits_{n=1}^\infty E_n (\omega)} 
  \bigcap O) \neq \emptyset  \right \}
  $$
 \be\label{fmea1p1}
  =\bigcup\limits_{n=1}^\infty
  \left \{ \omega\in \Omega: 
  (  E_n (\omega)
  \bigcap O) \neq \emptyset  \right \}
  = \bigcup\limits_{n=1}^\infty
   E_n^{-1} (O).  
\ee
  By assumption,  for all $n\in \N$, 
  $ E_n^{-1} (O)$ is measurable
  for every open set $O$, which along with
  \eqref{fmea1p1} completes   the proof.
\end{proof}

The following lemma can be found in \cite{car1} and
\cite{wan9}.
\begin{lem}
\label{fmea2}
Given $n \in  \N$, let $E_n : \Omega \to 2^X$
be an $\calf$-measurable
set-valued  mapping
with nonempty closed images.
Suppose for each fixed  $\omega \in \Omega$,
every sequence
$\{x_n\}_{n=1}^\infty$ with
$x_n \in E_n(\omega)$ is precompact in $X$.
If,  in addition,  $\{E_n (\omega)\}_{n=1}^\infty$
is a decreasing   sequence,
then the map: $ \omega \in \Omega  \to  $
$\bigcap\limits_{n\in \N } E_n(\omega) $
is $\calf$-measurable
with nonempty closed images.
\end{lem}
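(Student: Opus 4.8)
The plan is to reduce the assertion to the behaviour of the single-point distance functions $\omega \mapsto d(x, E(\omega))$, where $E(\omega) := \bigcap_{n\in\N} E_n(\omega)$, and to use the classical fact that for a closed-valued multifunction into a separable metric space, measurability in the sense recalled earlier in this section (the inverse image of every open set belonging to $\calf$) is equivalent to measurability of $\omega \mapsto d(x, E(\omega))$ for each fixed $x \in X$. I would first settle the elementary set-theoretic claims, then establish a pointwise identity for the distance functions, and finally convert this identity into measurability of $E$.

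First I would verify that each image $E(\omega)$ is nonempty and closed. Closedness is automatic, being an intersection of closed sets. For nonemptiness, choose $x_n \in E_n(\omega)$; by hypothesis the sequence $\{x_n\}$ is precompact, so some subsequence $x_{n_k} \to x^*$. Fixing $m$ and using that the family decreases, $x_{n_k} \in E_{n_k}(\omega) \subseteq E_m(\omega)$ whenever $n_k \ge m$, and closedness of $E_m(\omega)$ forces $x^* \in E_m(\omega)$; since $m$ is arbitrary, $x^* \in E(\omega)$.

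The core step is the identity
$$ d(x, E(\omega)) = \lim_{n\to\infty} d(x, E_n(\omega)), \qquad x \in X,\ \omega \in \Omega. $$
Monotonicity of the images gives that $d(x, E_n(\omega))$ increases to some limit $L$, and $E(\omega)\subseteq E_n(\omega)$ yields $L \le d(x, E(\omega))$. For the reverse inequality I would pick $z_n \in E_n(\omega)$ with $d(x, z_n) \le d(x, E_n(\omega)) + \tfrac1n \le L + \tfrac1n$; precompactness produces a subsequence $z_{n_k} \to z^*$, which lies in $E(\omega)$ by the same decreasing-closed argument as above, whence $d(x, E(\omega)) \le d(x, z^*) = \lim_k d(x, z_{n_k}) \le L$. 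This is precisely where the precompactness hypothesis is indispensable, and I expect it to be the main obstacle: for a merely decreasing family of closed sets one only has $L \le d(x, E(\omega))$, and compactness is what rules out a strict gap.

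Finally, measurability of each $E_n$ makes $\omega \mapsto d(x, E_n(\omega))$ measurable, since $\{\omega : d(x, E_n(\omega)) < r\} = E_n^{-1}(B(x,r)) \in \calf$ for every $r > 0$. The identity above then exhibits $d(x, E(\cdot))$ as a pointwise limit of measurable functions, hence measurable, for each $x \in X$. Using separability of $X$ (a separable Banach space in our setting), every open set $O$ is a countable union of open balls $B(x_i, r_i)$, so $E^{-1}(O) = \bigcup_i \{\omega : d(x_i, E(\omega)) < r_i\} \in \calf$; thus $E$ is $\calf$-measurable with nonempty closed images, as claimed.
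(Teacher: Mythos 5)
The paper itself never proves Lemma \ref{fmea2} --- it is quoted from \cite{car1} and \cite{wan9} --- and your distance-function argument is essentially the standard proof in that literature. Your three steps are all sound: nonemptiness and closedness of $E(\omega)=\bigcap_{n}E_n(\omega)$ via the precompactness-plus-decreasing argument; the identity $d(x,E(\omega))=\lim_{n}d(x,E_n(\omega))$, where you correctly isolate precompactness as the ingredient that excludes a strict gap between the limit and $d(x,E(\omega))$; and the measurability of each $\omega\mapsto d(x,E_n(\omega))$ from $\{\omega: d(x,E_n(\omega))<r\}=E_n^{-1}(B(x,r))\in\calf$, which makes $d(x,E(\cdot))$ a pointwise limit of measurable functions.

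One caveat deserves attention: your last step, writing an arbitrary open set $O$ as a countable union of balls, genuinely requires $X$ to be separable, whereas Lemma \ref{fmea2} is stated under the section's standing assumption that $X$ is only a metric space, and it is invoked in that generality in Lemma \ref{mome}. In every concrete application of the paper $X=\hone\times\ltwo$ is a separable Banach space, so nothing is lost downstream, but as a proof of the lemma as stated your argument is slightly narrower than the claim. The separability can in fact be avoided by a more direct set-theoretic argument: for open $O$ put $C_m=\{x\in X: d(x,X\setminus O)\ge 1/m\}$ and $U_m=\{x\in X: d(x,C_m)<1/(2m)\}$, so that $C_m$ is closed, $U_m$ is open, $O=\bigcup_m C_m$ and $\overline{U_m}\subseteq O$. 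Then $E^{-1}(O)=\bigcup\limits_{m=1}^\infty\bigcap\limits_{n=1}^\infty E_n^{-1}(U_m)$: the inclusion $\subseteq$ holds because any $x\in E(\omega)\cap O$ lies in some $C_m\subseteq U_m$ and in every $E_n(\omega)$, while $\supseteq$ follows by picking $x_n\in E_n(\omega)\cap U_m$, extracting a convergent subsequence by precompactness, and noting that its limit lies in $E(\omega)\cap\overline{U_m}\subseteq E(\omega)\cap O$ --- exactly your nonemptiness argument. Since each $E_n^{-1}(U_m)\in\calf$ by weak measurability of $E_n$, this proves the lemma for an arbitrary metric space $X$ and dispenses with the distance-function machinery altogether.
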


The  next   is  a result for the     $\calf$-measurability
of $\Omega$-limit sets of absorbing sets of $\Phi$
(see also \cite{car1}).

\begin{lem}
\label{mome}
 Let $\cald$ be  an inclusion-closed
 collection of   families of   nonempty subsets of
$X$,  and $\Phi$  be a multivalued
non-autonomous   cocycle on $X$
over $(\Omega, \calftwo, P, \{\thtwot\}_{t \in \R})$.
Suppose 
$\Phi$ is $\cald$-pullback asymptotically
compact in $X$ and the mapping
$\Phi(t, \tau, \omega, \cdot):
X\to 2^X$ is upper semicontinuous
for each $t\in \R^+$, $\tau \in \R$
and $\omega \in \Omega$. If $K\in \cald$ is a   
closed 
     $\cald$-pullback absorbing set
  of $\Phi$ and
   the mapping
$\omega \to \Phi(t, \tau, \omega, K(\tau, \omega))$
is measurable with respect to $\calf$ for every $t\in \R^+$
and $\tau \in \R$, 
then  the  $\Omega$-limit set $\Omega(K)$  is 
also  measurable with respect to $\calf$.
\end{lem}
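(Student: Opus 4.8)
The plan is to fix $\tau\in\R$ and rewrite the $\Omega$-limit set as a countable intersection of closures of countable unions of $\calf$-measurable set-valued maps, so that Lemmas \ref{fmea1} and \ref{fmea2} apply. To this end, set $E_t(\omega):=\Phi(t,\tau-t,\theta_{-t}\omega,K(\tau-t,\theta_{-t}\omega))$ for $t\ge 0$. First I would check that for each fixed $t$ the map $\omega\mapsto E_t(\omega)$ is $\calf$-measurable: it is the composition of $\omega'\mapsto\Phi(t,\tau-t,\omega',K(\tau-t,\omega'))$, which is $\calf$-measurable by hypothesis, with the $\calf/\calf$-measurable shift $\theta_{-t}:\Omega\to\Omega$. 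Indeed, for open $O$ one has $E_t^{-1}(O)=\theta_{-t}^{-1}\big(\{\omega':\Phi(t,\tau-t,\omega',K(\tau-t,\omega'))\cap O\neq\emptyset\}\big)$, and both the inner set and $\theta_{-t}^{-1}$ preserve $\calf$-measurability.

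Next I set $F_n(\omega):=\overline{\bigcup_{k\ge n,\,k\in\N}E_k(\omega)}$. By Definition \ref{omlit} the sets $\overline{\bigcup_{t\ge r}E_t(\omega)}$ are nonincreasing in $r$, so the outer intersection over $r\ge 0$ reduces to one over $r\in\N$; the crux of the argument is then the identity
\be
\Omega(K)(\tau,\omega)=\bigcap_{n\in\N}F_n(\omega),
\ee
i.e. that the $\Omega$-limit set computed along real times coincides with the one computed along integer times. The inclusion $\supseteq$ is immediate, since $\{k\in\N:k\ge n\}\subseteq\{t:t\ge r\}$ whenever $n\ge\lceil r\rceil$.

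The reverse inclusion is the main obstacle, and it is where every structural hypothesis is used. Given $z\in\Omega(K)(\tau,\omega)$, the definition provides $t_j\to\infty$ and $x_j\in E_{t_j}(\omega)$ with $x_j\to z$. For a fixed but arbitrary large integer $N$, I would split $t_j=(t_j-N)+N$ and apply the cocycle property (Definition \ref{cocy}(iii)) to write $x_j\in\Phi(N,\tau-N,\theta_{-N}\omega,z_j)$ with $z_j\in\Phi(t_j-N,(\tau-N)-(t_j-N),\theta_{-(t_j-N)}\theta_{-N}\omega,K(\cdots))$. Since $t_j-N\to\infty$, the $\cald$-pullback absorbing property of $K$ (applied at base point $(\tau-N,\theta_{-N}\omega)$ with $D=K$) forces $z_j\in K(\tau-N,\theta_{-N}\omega)$ for large $j$, and $\cald$-pullback asymptotic compactness (Definition \ref{asycomp}) yields a subsequence $z_j\to z^\ast\in K(\tau-N,\theta_{-N}\omega)$, the limit lying in the closed set $K(\tau-N,\theta_{-N}\omega)$. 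Finally, upper semicontinuity of $\Phi(N,\tau-N,\theta_{-N}\omega,\cdot)$ together with its closed images gives a closed graph, so from $z_j\to z^\ast$, $x_j\in\Phi(N,\tau-N,\theta_{-N}\omega,z_j)$ and $x_j\to z$ I would conclude $z\in\Phi(N,\tau-N,\theta_{-N}\omega,z^\ast)\subseteq E_N(\omega)$. As $N$ was an arbitrary large integer, $z\in\overline{\bigcup_{k\ge n}E_k(\omega)}$ for every $n$, which proves $z\in\bigcap_n F_n(\omega)$.

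With the identity established, the measurability is routine. Each $F_n$ is the closure of a countable union of $\calf$-measurable maps, hence $\calf$-measurable with closed images by Lemma \ref{fmea1}; the family $\{F_n\}$ is decreasing, and $\cald$-pullback asymptotic compactness shows that for fixed $\omega$ every sequence $x_n\in F_n(\omega)$ is precompact (approximate each $x_n$ by a point of some $E_{k_n}(\omega)$ with $k_n\ge n\to\infty$, then extract a convergent subsequence). Lemma \ref{fmea2} then gives that $\omega\mapsto\bigcap_n F_n(\omega)=\Omega(K)(\tau,\omega)$ is $\calf$-measurable with nonempty closed images. Since $\tau\in\R$ was arbitrary, $\Omega(K)$ is measurable with respect to $\calf$, as claimed.
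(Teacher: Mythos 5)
Your proposal is correct and follows essentially the same route as the paper: reduce $\Omega(K)(\tau,\omega)$ to the countable intersection $\bigcap_n \overline{\bigcup_{m\ge n}\Phi(m,\tau-m,\theta_{-m}\omega,K(\tau-m,\theta_{-m}\omega))}$, get $\calf$-measurability of each term from the hypothesis, the measurability of $\theta_{-m}$ and Lemma \ref{fmea1}, and conclude via asymptotic compactness and Lemma \ref{fmea2}. The only difference is that the paper states the integer-time identity \eqref{momep1} as something ``one can check'' from Definition \ref{omlit} and Lemma \ref{olim}, whereas you prove it in full (via the cocycle property, absorption, asymptotic compactness, and the closed-graph consequence of upper semicontinuity), which is a welcome filling-in of the paper's omitted detail rather than a different method.
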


\begin{proof}
By Definition \ref{omlit}
and Lemma \ref{olim}, one can check  that
for every $\tau \in \R$  and $\omega \in \Omega$,
\be
\label{momep1}
    \Omega (K) (\tau, \omega) =
  \bigcap_{n=1}^\infty
\  \overline{ \bigcup_{m =n}^\infty \Phi(m, \tau -m,
\theta_{ -m} \omega, K(\tau -m, \theta_{-m}\omega  ))}  
= \bigcap\limits_{n=1}^\infty E_n(\omega),
\ee
where  
\be\label{momep2}
E_n(\omega) = \overline{ \bigcup_{m=n }^\infty \Phi(m, \tau -m,
\theta_{ -m} \omega, K(\tau -m, \theta_{-m}\omega  ) )}.
\ee 
By the measurability of $\theta_{-m}:
\Omega \to \Omega$ and the
  assumption,  we find that for every $m \in \N$, 
 $\Phi(m, \tau -m,
\theta_{ -m} \omega, K(\tau -m, \theta_{-m}\omega  ) )$
is measurable in $\omega$ with respect to $\calf$,  
which together with
  \eqref{momep2} and
 Lemma \ref{fmea1} 
 implies the $\calf$-measurability of $E_n$.

On the other hand, since  $\Phi$ is 
$\cald$-pullback asymptotically compact, we find that
any sequence    $x_n \in E_n(\omega)$ 
is precompact. It is clear that
$E_n(\omega) $ is nonempty and closed  
and  $\{E_n(\omega)\}_{n=1}^\infty$ is
   decreasing.  Therefore,  by  Lemma \ref{fmea2}, 
  the mapping 
   $\omega \to  \bigcap\limits_{n=1}^\infty  E_n(\omega)$
   is measurable, which along with \eqref{momep1}
completes   the proof.
\end{proof}

By Theorem \ref{wusc} and Lemma \ref{mome}
we obtain the following  measurability  of
$\Omega$-limit sets of $\Phi$  which  is convenient in many applications
where $\Phi$ is generated by solutions of  non-autonomous stochastic
equations.

\begin{lem}
\label{mome2}
Suppose $X$ is a separable Banach space and $\Omega$
a metric space with $\Omega =\bigcup\limits_{m=1}^\infty \Omega_m$
and $\Omega_m \in \calb(\Omega)$.
 Let $\cald$ be  an inclusion-closed
 collection of   families of   nonempty subsets of
$X$  and $\Phi$  be a multivalued
non-autonomous   cocycle on $X$
over $(\Omega, \calb(\Omega), P, \{\thtwot\}_{t \in \R})$.
Assume 
$\Phi$ is $\cald$-pullback asymptotically
compact in $X$ and the mapping
$\Phi(t, \tau, \omega, \cdot):
X\to 2^X$ is upper semicontinuous
for each $t\in \R^+$, $\tau \in \R$
and $\omega \in \Omega$.
  If $K\in \cald$ is a   
closed 
     $\cald$-pullback absorbing set
  of $\Phi$ and
   the mapping 
$  \Phi(t, \tau, \cdot, K(\tau, \cdot))$:
$\Omega_m \to 2^X$
is weakly  upper semicontinuous in the sense of
Definition \ref{wuc}  for every $m \in \N$,
 $t\in \R^+$ and $\tau\in \R$, 
then  the  $\Omega$-limit set $\Omega(K)$  is 
  measurable with respect to $\calf$.
\end{lem}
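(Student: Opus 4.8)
The plan is to reduce Lemma \ref{mome2} to Lemma \ref{mome}. Comparing the two statements, every hypothesis of Lemma \ref{mome} is already present here except one: Lemma \ref{mome} requires that for each $t\in\R^+$ and $\tau\in\R$ the mapping $\omega\mapsto\Phi(t,\tau,\omega,K(\tau,\omega))$ be measurable with respect to $\calf=\calb(\Omega)$. If I can verify this single measurability statement, then Lemma \ref{mome} applies verbatim and delivers the $\calf$-measurability of $\Omega(K)$. So the entire task is to upgrade the weak upper semicontinuity assumption on each piece $\Omega_m$ into genuine $\calb(\Omega)$-measurability of the full map.

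First I would fix $t\in\R^+$ and $\tau\in\R$ and write $G(\omega)=\Phi(t,\tau,\omega,K(\tau,\omega))$ for brevity. By hypothesis, for each $m\in\N$ the restriction $G|_{\Omega_m}:\Omega_m\to 2^X$ is weakly upper semicontinuous in the sense of Definition \ref{wuc}. Since $\Omega_m$, with the induced metric, is itself a metric space and $X$ is a separable Banach space, Theorem \ref{wusc} applies and gives that $G|_{\Omega_m}$ is measurable with respect to $\calb(\Omega_m)$, the Borel $\sigma$-algebra of the subspace $\Omega_m$. Concretely, for every open set $O$ in $X$ the set $\{\omega\in\Omega_m:G(\omega)\bigcap O\neq\emptyset\}$ belongs to $\calb(\Omega_m)$.

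The hard part will be transferring this intrinsic measurability on each $\Omega_m$ to measurability on the whole of $\Omega$. To do so I would invoke the standard fact that the Borel $\sigma$-algebra of a metric subspace is the trace $\sigma$-algebra, namely $\calb(\Omega_m)=\{B\bigcap\Omega_m:B\in\calb(\Omega)\}$. Because $\Omega_m\in\calb(\Omega)$ by assumption, any set $B\bigcap\Omega_m$ with $B\in\calb(\Omega)$ is again an intersection of two members of $\calb(\Omega)$ and hence lies in $\calb(\Omega)$; therefore $\calb(\Omega_m)\subseteq\calb(\Omega)$. It is precisely here that the hypothesis $\Omega_m\in\calb(\Omega)$ is used, and this trace-$\sigma$-algebra step is the only nontrivial point of the argument: one must be certain that no measurability is lost in passing from the subspace structure to that of $\Omega$. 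Combining this inclusion with the previous paragraph, the set $\{\omega\in\Omega_m:G(\omega)\bigcap O\neq\emptyset\}$ belongs to $\calb(\Omega)$ for every $m$ and every open $O\subseteq X$.

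Finally, using $\Omega=\bigcup_{m=1}^\infty\Omega_m$, I would decompose the inverse image over the full space as a countable union,
$$
\{\omega\in\Omega:G(\omega)\bigcap O\neq\emptyset\}
=\bigcup_{m=1}^\infty\{\omega\in\Omega_m:G(\omega)\bigcap O\neq\emptyset\},
$$
which then lies in $\calb(\Omega)=\calf$ as a countable union of $\calf$-sets. Since this holds for every open $O\subseteq X$, the mapping $\omega\mapsto\Phi(t,\tau,\omega,K(\tau,\omega))$ is $\calf$-measurable for each $t\in\R^+$ and $\tau\in\R$. With this, all hypotheses of Lemma \ref{mome} are satisfied, and that lemma yields at once that $\Omega(K)$ is measurable with respect to $\calf$, completing the proof.
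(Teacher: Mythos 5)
Your proposal is correct and follows essentially the same route as the paper: apply Theorem \ref{wusc} on each piece $\Omega_m$ to get $\calb(\Omega_m)$-measurability, glue via $\Omega=\bigcup_{m}\Omega_m$ with $\Omega_m\in\calb(\Omega)$ to obtain $\calf$-measurability of $\omega\mapsto\Phi(t,\tau,\omega,K(\tau,\omega))$, and then invoke Lemma \ref{mome}. The only difference is that you spell out the trace-$\sigma$-algebra argument and the countable-union decomposition that the paper leaves implicit.
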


\begin{proof}
Since for every $m \in \N$,
$t\in \R^+$ and $\tau \in \R$,
the  mapping 
$ \omega \to  \Phi(t, \tau, \omega, K(\tau, \omega))$
is weakly  upper semicontinuous 
from 
 $\Omega_m$ to $ 2^X$,
 by     Theorem \ref{wusc},  we find that
 the mapping
 $  \Phi(t, \tau, \cdot, K(\tau, \cdot))$:
$\Omega_m \to 2^X$
     is measurable with respect to $\calb(\Omega_m)$.
 Since 
$ \Omega= \bigcup\limits_{m=1}^\infty \Omega_m$
and $\Omega_m \in \calb(\Omega)$ we infer  that
  the mapping
   $  \Phi(t, \tau, \cdot, K(\tau, \cdot))$:
$\Omega \to 2^X$ 
is measurable with respect to $\calf=\calb(\Omega)$, which together
with Lemma \ref{mome} yields the desired result.
\end{proof}

As an immediate consequence of Lemmas \ref{olim}
and \ref{mome2} we have the following existence and uniqueness
of non-autonomous random attractors.

\begin{thm}
\label{att}
Suppose $X$ is a separable Banach space and $\Omega$
a metric space with $\Omega =\bigcup\limits_{m=1}^\infty \Omega_m$
and $\Omega_m \in \calb(\Omega)$.
 Let $\cald$ be  an inclusion-closed
 collection of   families of   nonempty subsets of
$X$  and $\Phi$  be a multivalued
non-autonomous   cocycle on $X$
over $(\Omega, \calb(\Omega), P, \{\thtwot\}_{t \in \R})$.
Suppose further:

(i) $\Phi$ is $\cald$-pullback asymptotically
compact in $X$.

 (ii)   
$\Phi(t, \tau, \omega, \cdot):
X\to 2^X$ is upper semicontinuous
for each $t\in \R^+$, $\tau \in \R$
and $\omega \in \Omega$.

 (iii)   $\Phi$  has   
a closed 
     $\cald$-pullback absorbing set
     $K\in \cald$.

  (iv)  
$  \Phi(t, \tau, \cdot, K(\tau, \cdot))$:
$\Omega_m \to 2^X$
is weakly  upper semicontinuous in the sense of
Definition \ref{wuc}  for every $m \in \N$,
$t\in \R^+$ and $\tau\in \R$.

Then   
$\Phi$ has a  unique  $\cald$-pullback
attractor $\cala = \Omega (K)$  in $\cald$.
If, in addition, both $\Phi$  and $K$ are $T$-periodic,
then so is the attractor $\cala$.
 \end{thm}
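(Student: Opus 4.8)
The plan is to take $\cala = \Omega(K)$ and verify directly that it satisfies the three defining conditions (i)--(iii) of Definition \ref{defatt}, together with membership $\cala \in \cald$, and then to establish uniqueness by a standard invariance-plus-attraction argument. Essentially all of the structural content has already been isolated in Lemmas \ref{olim} and \ref{mome2}, so the proof is an assembly of these results once their hypotheses are checked to be in force, followed by the uniqueness step and the periodicity step.

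First I would observe that hypotheses (i)--(iii) of the theorem are precisely the standing assumptions of Lemma \ref{olim}: $\Phi$ is $\cald$-pullback asymptotically compact, each $\Phi(t,\tau,\omega,\cdot)$ is upper semicontinuous, and $K \in \cald$ is a closed $\cald$-pullback absorbing set. Lemma \ref{olim} then yields that $\Omega(K) \in \cald$ with compact fibres $\Omega(K)(\tau,\omega)$, that $\Omega(K)$ is invariant, and that $\Omega(K)$ attracts every member of $\cald$. This delivers parts (ii) and (iii) of Definition \ref{defatt} and the compactness half of part (i) in one stroke. For the remaining measurability half of part (i), I would invoke Lemma \ref{mome2}: its hypotheses coincide with (i)--(iii) here, and the weak upper semicontinuity assumption (iv) supplies exactly the required condition on $\omega \mapsto \Phi(t,\tau,\omega,K(\tau,\omega))$ restricted to each $\Omega_m$. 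Hence $\omega \mapsto \Omega(K)(\tau,\omega)$ is measurable with respect to $\calf = \calb(\Omega)$, and therefore $\cala = \Omega(K)$ is a $\cald$-pullback attractor.

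For uniqueness I would suppose $\cala_1$ and $\cala_2$ are both $\cald$-pullback attractors and play invariance against attraction. Rewriting the invariance identity of Definition \ref{defatt}(ii) in pullback form, by replacing $(\tau,\omega)$ with $(\tau - t, \theta_{-t}\omega)$, gives $\cala_1(\tau,\omega) = \Phi\big(t, \tau - t, \theta_{-t}\omega, \cala_1(\tau - t, \theta_{-t}\omega)\big)$ for every $t \ge 0$. Since $\cala_1 \in \cald$ and $\cala_2$ attracts every member of $\cald$, the Hausdorff semi-distance $d\big(\Phi(t, \tau - t, \theta_{-t}\omega, \cala_1(\tau - t, \theta_{-t}\omega)), \cala_2(\tau,\omega)\big)$ tends to $0$ as $t \to \infty$; but by the invariance identity this quantity is the $t$-independent constant $d(\cala_1(\tau,\omega), \cala_2(\tau,\omega))$. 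Hence $d(\cala_1(\tau,\omega), \cala_2(\tau,\omega)) = 0$, and since $\cala_2(\tau,\omega)$ is compact, hence closed, this forces $\cala_1(\tau,\omega) \subseteq \cala_2(\tau,\omega)$. Interchanging the roles of $\cala_1$ and $\cala_2$ yields the reverse inclusion, so $\cala_1 = \cala_2$.

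Finally, the periodicity claim follows directly from the last assertion of Lemma \ref{olim}: if both $\Phi$ and $K$ are $T$-periodic, then $\Omega(K)(\tau + T, \omega) = \Omega(K)(\tau, \omega)$, so $\cala$ is $T$-periodic. I expect no single step to be a genuine obstacle, since the analytic work lives in the earlier lemmas; the only point demanding real care is the measurability half of part (i), where one must ensure that the decomposition $\Omega = \bigcup_{m=1}^\infty \Omega_m$ with $\Omega_m \in \calb(\Omega)$ allows the weak upper semicontinuity hypothesis (iv) to be upgraded, via Theorem \ref{wusc} applied on each $\Omega_m$ together with Lemma \ref{fmea1}, into honest $\calf$-measurability on all of $\Omega$. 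This is exactly the content of Lemma \ref{mome2}, so the whole theorem reduces to verifying that (i)--(iv) feed correctly into Lemmas \ref{olim} and \ref{mome2}.
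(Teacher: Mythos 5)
Your proposal is correct and follows exactly the route the paper intends: the paper states Theorem \ref{att} as an immediate consequence of Lemmas \ref{olim} and \ref{mome2}, which is precisely your assembly, with hypotheses (i)--(iii) feeding Lemma \ref{olim} and hypothesis (iv) feeding Lemma \ref{mome2}. Your explicit uniqueness argument (invariance of one attractor in pullback form played against the attraction property of the other, plus closedness of compact fibres) and the appeal to the periodicity clause of Lemma \ref{olim} are the standard details the paper leaves implicit, and they are carried out correctly.
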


 \section{Multivalued non-autonomous  cocycles for  wave equations}
\setcounter{equation}{0}

 We will  define a multi-valued 
  non-autonomous 
cocycle  in this section for
problem \eqref{intro1}-\eqref{intro2},
and then investigate the continuity properties
of the solutions.

Let
 $z= u_t + \delta u$ where $\delta$  is a nonnegative number to be
 determined later.
By 
\eqref{intro1}   we  get
 \begin{equation}
 \label{spde1}
 {\frac {du}{dt}} + \delta u =z,
 \end{equation}
 \begin{equation}
 \label{spde2}
 {\frac {dz}{dt}} + (\alpha -\delta) z
 + (\lambda + \delta^2 -\alpha \delta) u
 -\Delta u + f(x, u) =g(t, x) + \varepsilon u\circ
  {\frac {dw}{dt}},
 \end{equation}
   with   initial
 conditions
 \begin{equation}
 \label{spde3}
 u(x, \tau) =u_0(x), \quad z(x, \tau) = z_0(x),
 \end{equation}
  where $z_0(x) = u_1(x)+ \delta u_0(x)$.
  Throughout the rest of the paper, we assume 
  $g\in L^2_{loc}(\R, \ltwo)$, 
  $f: \R^n \times \R \to  \R$ is  a continuous function which
  along with its  antiderivative  
     $F(x,s) = \int_0^s f(x,s) ds$ 
     satisfies:  
  for every
 $x\in \mathbb{R}^n$ and $ s\in \R$,
 \begin{equation}
 \label{f1}
 |f(x,s)|
 \le c_1 |s|^\gamma + \phi_1(x),\quad \phi_1 \in L^2(\mathbb{R}^n),
 \end{equation}
 \begin{equation}
 \label{f2}
 f(x,s) s - c_2 F(x, s) \ge \phi_2(x),
 \quad \phi_2 \in L^1(\mathbb{R}^n),
 \end{equation}
 \begin{equation}
 \label{f3}
 F(x,s) \ge c_3 |s|^{\gamma +1} -\phi_3 (x),
 \quad \phi_3 \in L^1(\mathbb{R}^n),
 \end{equation}
 where  $c_1$, $c_2$  and $c_3$
 are positive constants,   $\gamma \in [1,\infty)$ for $n=1, 2$,
 and  $\gamma \in [1, 3]$ for $n=3$.
 In  the three-dimensional case, $\gamma =3$ is called the
 critical exponent.

 Let  $(\Omega, \mathcal{F}, P, \{\theta_t\}_{t\in \R})$ 
 be  the standard  metric 
dynamical  system where  
$ 
\Omega = \{ \omega   \in C(\R, \R ): \ \omega(0) =  0 \}$
endowed with compact-open topology,
  $\calf$  is 
 the Borel $\sigma$-algebra,
 $P$
is   the Wiener
measure on $(\Omega, \calf)$, and   
$\theta_t: \Omega \to \Omega$ is given by  
 $ \theta_t \omega (\cdot) = 
 \omega (\cdot +t) - \omega (t) $ 
 for all  $  \omega \in \Omega$
 and $  t \in \R$.

 Consider the random variable
 $y: \Omega \to \R$ given by 
\be\label{zomega}
y ( \omega)=   
-\alpha \int^0_{-\infty} e^{\alpha \tau}    \omega  (\tau) d \tau,
\quad  \omega  \in \Omega.
\ee
Then the process   $y(\theta_t \omega )$  
satisfies 
\be
\label{ou1}
dy(\theta_t \omega )  + \alpha  y(\theta_t \omega)  dt = d w.
\ee
It follows from \cite{arn1} that there
  exists a
 $\theta_t$-invariant subset of full measure
 (which is still  denoted by $\Omega$)  such that
  $y(\theta_t\omega)$  is
 continuous in $t$  and  $y(\omega)$ is tempered in $\omega
 \in \Omega$.

Let $v$  be a new variable given by
  $v(t, \tau, \omega) = z(t, \tau, \omega)
 - \varepsilon y(\theta_t \omega) u(t, \tau,
 \omega)$.  By
   \eqref{spde1}-\eqref{spde3}  we get 
 \begin{equation}
 \label{pde1}
 {\frac {du}{dt}} + \delta u -v  = \varepsilon y(\theta_t \omega)
 u,
 \end{equation}
 \begin{equation}
 \label{pde2}
 {\frac {dv}{dt}} + (\alpha -\delta) v
 + (\lambda + \delta^2 -\alpha \delta) u
 -\Delta u + f(x, u) =g  
 - \varepsilon y(\theta_t \omega) v
 -\varepsilon 
 \left ( 
 \varepsilon y(\theta_t \omega) -2\delta 
 \right ) y(\theta_t \omega)u,
 \end{equation}
   with   initial
 conditions
 \begin{equation}
 \label{pde3}
 u( \tau, x) =u_0(x), \quad v( \tau, x) = v_0(x),
 \end{equation}
 where $v_0 = z_0 - \varepsilon y(\theta_\tau \omega)
 u_0$.

Note that problem    \eqref{pde1}-\eqref{pde3}
is a pathwise deterministic  system parametrized 
by $\omega \in \Omega$. Then by the deterministic
approach  of  \cite{bal1},  one can show 
that under  conditions \eqref{f1}-\eqref{f3},  
  for  every
 $\omega \in \Omega$,   $\tau \in \mathbb{R}$ and
 $(u_0, v_0) \in \hone \times \ltwo$,
system  \eqref{pde1}-\eqref{pde3}
 has  at least  one    solution,  in the sense of \cite{bal1}, 
 $(u(\cdot, \tau, \omega, u_0), v(\cdot, \tau, \omega, v_0))
 \in C([\tau, \infty), H^1(\mathbb{R}^n) \times L^2(\mathbb{R}^n))$
 with $(u(\tau, \tau, \omega, u_0), v(\tau, \tau, \omega, v_0))
 =(u_0, v_0)$. 
    This implies that
 $(u(\cdot, \tau, \omega, u_0), z(\cdot, \tau, \omega, z_0))
 \in C([\tau, \infty), H^1(\mathbb{R}^n) \times L^2(\mathbb{R}^n))$
satisfies  \eqref{spde1}-\eqref{spde3}
 with
 \be\label{tranz}
 z(t, \tau, \omega, z_0) = v(t, \tau, \omega, v_0)
 + \varepsilon y(\theta_t \omega)  u(t, \tau, \omega,  u_0).
 \ee
 Note that  any solution $(u,v)$ of \eqref{pde1}-\eqref{pde3}
   satisfies the energy equation:
  $$
   {\frac d{dt}}  \left (
  \| v \|^2 + (\lambda +\delta^2 -\alpha \delta) \| u \|^2
  + \| \nabla u \|^2 + 2 \int_{\mathbb{R}^n} F(x, u)dx
 \right )
 $$
 $$
  +2 (\alpha -\delta) \| v\|^2
  + 2 \delta (\lambda + \delta^2 -\alpha \delta ) \| u \|^2
  +2 \delta \| \nabla u \|^2
  + 2 \delta (f(x,u), u)
 $$
 $$
 =  2(g, v) -2  \varepsilon y(\theta_t \omega) \|v\|^2
 -2\varepsilon (\varepsilon y(\theta_t \omega) -2\delta)
 y(\theta_t \omega) (u,v)
 $$
 \begin{equation}
 \label{ener1}
 +2 \varepsilon  (\lambda +\delta^2 -\alpha \delta)
  y(\theta_t \omega) \| u \|^2
  + 2\varepsilon y(\theta_t \omega) \| \nabla u \|^2
  +2 \varepsilon y(\theta_t \omega) (f(x,u), u).
 \end{equation}
 Formally, equation \eqref{ener1} can be obtained by
 taking the inner product of \eqref{pde2}
 with $v$ in $L^2(\mathbb{R}^n)$, and  then using \eqref{pde1}
 to  substitute for $v$ in 
$(u,v) $, 
$(\Delta  u, v)$ and  
$(f(x,u), v )$ (see, e.g., \cite{wan9}). This process  can be justified
by a limiting approach as in \cite{bal1}.

We will prove the weak and strong upper semicontinuity of the solutions of
\eqref{pde1}-\eqref{pde3}. To this end,  for every positive integer
$m\in \N$, we introduce a subset of $\Omega$:
\be\label{defom}
\Omega_m = \{ \omega \in \Omega: 
|\omega (t)| \le |t|\quad
\mbox{and} \quad |\int_0^t |y(\theta_r \omega)|^2 dr |
\le {\frac 1\alpha} |t|
\quad \mbox{for all} \ |t| \ge m \}.
\ee
These  subsets $\Omega_m$ with $m \in \N$ have the following properties:

\begin{lem}
\label{omegam}
Let  $y$ be the random variable given
by \eqref{zomega}, and 
$\Omega_m$ be the subset of $\Omega$  given 
by \eqref{defom} for $m\in \N$.

(i) If $\omega_n \to \omega$ with $\omega_n, \omega \in \Omega_m$
for a fixed $m\in \N$, then
$y(\theta_t \omega_n) \to y(\theta_t \omega)$ uniformly for
$t$  in any compact interval of $\R$. Particularly,
if $t_n \to t$  and $\omega_n \to \omega$
with $\omega_n, \omega \in \Omega_m$ for a fixed $m$, then
$y(\theta_{t_n} \omega_n) \to y(\theta_t \omega)$.

(ii) For every $m \in \N$, $\Omega_m$ is a closed subset of $\Omega$
and
$\Omega = \bigcup\limits_{m=1}^\infty \Omega_m$.

(iii)  Given $m \in \N$,  we have,  for all $ t \le - m$  and $\omega\in \Omega_m$,
\be\label{omegam1}
|y(\theta_t \omega)| \le 2 |t| +\alpha \int_{-\infty}^0
e^{\alpha \tau} |\tau| d\tau.
\ee
 \end{lem}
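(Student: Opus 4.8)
The statement bundles three assertions about the sets $\Omega_m$, and I would prove them in the order (ii), (i), (iii), since the closedness in (ii) gives the framework, (i) is the substantive compactness-type claim, and (iii) is a direct estimate. Throughout, the key structural fact is the integral representation \eqref{zomega}, namely $y(\omega) = -\alpha \int_{-\infty}^0 e^{\alpha\tau}\omega(\tau)\,d\tau$, together with the cocycle identity $\theta_t\omega(\cdot) = \omega(\cdot + t) - \omega(t)$, which yields the working formula
\be
y(\theta_t\omega) = -\alpha \int_{-\infty}^0 e^{\alpha\tau}\bigl(\omega(\tau + t) - \omega(t)\bigr)\,d\tau.
\ee
Everything reduces to controlling this integral using the linear growth bound $|\omega(t)| \le |t|$ for $|t| \ge m$ that defines membership in $\Omega_m$.

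\textbf{Part (ii).} For closedness, I would take $\omega_n \to \omega$ in $\Omega$ with $\omega_n \in \Omega_m$ and show $\omega \in \Omega_m$. Convergence in the compact-open topology means uniform convergence on compact intervals, so for each fixed $t$ with $|t| \ge m$ the bound $|\omega_n(t)| \le |t|$ passes to the limit to give $|\omega(t)| \le |t|$. For the second defining condition I would use that $y(\theta_r\omega_n) \to y(\theta_r\omega)$ pointwise (an easy consequence of the integral representation and dominated convergence, since $\omega$ is tempered) together with a uniform integrable majorant on the compact $r$-interval $[0,t]$, so that $\int_0^t |y(\theta_r\omega_n)|^2\,dr \to \int_0^t |y(\theta_r\omega)|^2\,dr$ and the bound $\le \frac1\alpha |t|$ survives. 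That $\Omega = \bigcup_m \Omega_m$ follows because for each fixed $\omega \in \Omega$ the two required estimates hold for all $|t|$ large enough (the first by sublinear growth of Brownian paths, the second by the law of large numbers / ergodicity of the Ornstein-Uhlenbeck process), so $\omega \in \Omega_m$ for some $m$.

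\textbf{Part (i).} This is the main obstacle. Given $\omega_n \to \omega$ in $\Omega_m$, I want $y(\theta_t\omega_n) \to y(\theta_t\omega)$ uniformly for $t$ in a compact interval $[-L, L]$. I would split the defining integral at some large $-R$: on the finite piece $\int_{-R}^0$ the integrand converges uniformly in $t \in [-L,L]$ because $\omega_n \to \omega$ uniformly on the compact set $[-R - L,\, L]$; on the tail $\int_{-\infty}^{-R}$ I would bound the integrand using $|\omega_n(\tau + t) - \omega_n(t)| \le |\tau + t| + |t| \le |\tau| + 2|t|$ (valid once $|\tau + t|, |t| \ge m$, which holds for $\tau$ sufficiently negative), so that $e^{\alpha\tau}$ forces the tail to be smaller than any prescribed $\varepsilon$ uniformly in $n$ and in $t \in [-L,L]$. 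Combining the two gives the uniform convergence; the subsequent claim that $t_n \to t$ and $\omega_n \to \omega$ imply $y(\theta_{t_n}\omega_n) \to y(\theta_t\omega)$ then follows by a standard triangle-inequality argument, using the uniform convergence just established to handle the varying argument $t_n$ together with the continuity of $t \mapsto y(\theta_t\omega)$.

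\textbf{Part (iii).} This is the cleanest part. For $t \le -m$ and $\omega \in \Omega_m$, I would insert the working formula and estimate directly: split $\omega(\tau + t) - \omega(t)$ and bound each piece. For $\tau \le 0$ and $t \le -m$ one has $\tau + t \le -m$, so $|\omega(\tau + t)| \le |\tau + t| \le |\tau| + |t|$ and $|\omega(t)| \le |t|$; hence the integrand is bounded by $e^{\alpha\tau}(|\tau| + 2|t|)$. Integrating, $\alpha\int_{-\infty}^0 e^{\alpha\tau}\,d\tau = 1$ contributes the $2|t|$ term and $\alpha\int_{-\infty}^0 e^{\alpha\tau}|\tau|\,d\tau$ contributes the remaining constant, yielding exactly \eqref{omegam1}.
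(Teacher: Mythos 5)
Your plan follows essentially the same route as the paper's proof: the integral representation of $y(\theta_t\omega)$, a splitting of the integral into a far tail (controlled uniformly in $n$ by the linear-growth bound built into the definition \eqref{defom} of $\Omega_m$) and a compact piece (controlled by compact-open convergence), the same direct estimate for part (iii), and sublinear path growth plus the ergodic theorem for $\Omega=\bigcup_{m}\Omega_m$. The only structural difference is organizational: you prove (ii) before (i), replacing the paper's use of (i) inside (ii) by a separate dominated-convergence argument; the paper proves (i) first and then quotes the uniform convergence on $[0,t]$ to pass to the limit in $\int_0^t|y(\theta_r\omega_n)|^2\,dr$.

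One step in your part (i) is mis-justified, though easily repaired. You bound $|\omega_n(\tau+t)-\omega_n(t)|\le |\tau+t|+|t|\le |\tau|+2|t|$, claiming this is ``valid once $|\tau+t|,\,|t|\ge m$, which holds for $\tau$ sufficiently negative.'' The condition $|t|\ge m$ is a constraint on $t$, not on $\tau$: for $t$ in a compact interval $[-L,L]$ containing points with $|t|<m$, membership in $\Omega_m$ gives you no bound at all on $|\omega_n(t)|$. The repair is one line: a sequence converging in the compact-open topology is uniformly bounded on compacts, so $\sup_n\sup_{|t|\le L}|\omega_n(t)|\le C<\infty$, and the tail integrand is dominated by $e^{\alpha\tau}(|\tau|+L+C)$, which still makes the tail uniformly small. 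The paper sidesteps the issue entirely by rewriting $y(\theta_t\omega)=-\alpha e^{-\alpha t}\int_{-\infty}^t e^{\alpha\tau}\omega(\tau)\,d\tau+\omega(t)$, so that $\omega_n(t)-\omega(t)$ appears as a standalone term killed by uniform convergence, and only differences $\omega_n(\tau)-\omega(\tau)$ with $\tau\le -T\le -m$ (where the linear bound of $\Omega_m$ genuinely applies) enter the tail. A similar caution applies to your part (ii): the dominating function needed there comes from the uniform-in-$n$ bounds on the $\omega_n$ (the $\Omega_m$ bound plus uniform boundedness on compacts), not, as you write, from temperedness of the limit $\omega$.
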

 
 \begin{proof}
 
 (i).
 Let $[a,b]$ be a compact interval of $\R$ and $\omega_n \to \omega$
 with $\omega_n, \omega \in \Omega_m$.
 By \eqref{zomega} we get
 \be\label{omegam_a1}
 y(\theta_t \omega) = -\alpha \int_{-\infty}^0 e^{\alpha \tau}
 \omega(t+\tau) d\tau + \omega (t)
 =
 -\alpha e^{-\alpha t} \int_{-\infty}^t e^{\alpha \tau}
 \omega(\tau) d\tau + \omega (t).
\ee
 Therefore we have
 \be\label{omegam_p1}
 |y(\theta_t \omega_n) - y(\theta_t \omega)|
\le
  \alpha e^{-\alpha t}  \left | \int_{-\infty}^t e^{\alpha \tau}
( \omega_n(\tau) -\omega (\tau)  d\tau \right |
+ | \omega_n (t)  - \omega (t) |.
\ee
 Since $\int_{-\infty}^0 e^{\alpha \tau} |\tau| d\tau <\infty$,
 given $\varepsilon>0$, there exists $T_1 = T_1(\varepsilon)>0$ such that
 \be\label{omegam_p2}
 \int_{-\infty}^{-T_1} e^{\alpha \tau} |\tau|d\tau <\varepsilon.
 \ee
 Let $T=\max \{ T_1, m, -a\}$. By \eqref{omegam_p1}-\eqref{omegam_p2}
 and the fact  that   $\omega_n, \omega \in \Omega_m$, we obtain, for all
 $t\in [a,b]$,
 $$
   |y(\theta_t \omega_n) - y(\theta_t \omega)|
\le
  \alpha e^{-\alpha t}   \int_{-\infty}^t e^{\alpha \tau}
|\omega_n(\tau) -\omega (\tau) |  d\tau 
+ | \omega_n (t)  - \omega (t) |
$$
 $$
 \le
  \alpha e^{-\alpha t}   \int_{-\infty}^{-T} e^{\alpha \tau}
|\omega_n(\tau) -\omega (\tau) |  d\tau 
+
 \alpha e^{-\alpha t}   \int_{-T}^{t} e^{\alpha \tau}
|\omega_n(\tau) -\omega (\tau) |  d\tau 
+ | \omega_n (t)  - \omega (t) |
$$
  $$
 \le
 2 \alpha e^{-\alpha t}   \int_{-\infty}^{-T} e^{\alpha \tau}
| \tau |  d\tau 
+
 \alpha e^{-\alpha t}   \int_{-T}^{b} e^{\alpha \tau}
|\omega_n(\tau) -\omega (\tau) |  d\tau 
+ | \omega_n (t)  - \omega (t) |
$$
 \be\label{omegam_p5}
 \le
 2 \alpha e^{-\alpha a} \varepsilon   
+
 \alpha e^{-\alpha a}   \int_{-T}^{b} e^{\alpha \tau}
|\omega_n(\tau) -\omega (\tau) |  d\tau 
+ | \omega_n (t)  - \omega (t) |.
\ee
Since $\omega_n \to \omega$ with respect to the
compact-open topology of $\Omega$, 
there exists $N=N(\varepsilon)\ge 1$ such that
for all $n \ge N$ and $t \in [a,b]$,
 $$
 \alpha e^{-\alpha a}   \int_{-T}^{b} e^{\alpha \tau}
|\omega_n(\tau) -\omega (\tau) |  d\tau 
\le \varepsilon
\quad \mbox{and} \quad
  | \omega_n (t)  - \omega (t) | \le \varepsilon,
  $$
  which  together with
  \eqref{omegam_p5}  shows that 
   for all $n \ge N$ and $t \in [a,b]$,
   $$
    |y(\theta_t \omega_n) - y(\theta_t \omega)|
    \le
    2 \alpha e^{-\alpha a} \varepsilon  
    +2 \varepsilon.
    $$
    In other words,  we get
    \be
    \label{omegam_p7}
    y(\theta_t \omega_n) \to  y(\theta_t \omega) 
    \ \mbox{uniformly in} \   t \in [a,b] .
    \ee

    Now suppose $t_n \to t$   and $\omega_n \to \omega$
    with $\omega_n, \omega \in \Omega_m$. Then we have
    $$
     |y(\theta_{t_n} \omega_n) - y(\theta_t \omega)|
     \le
      |y(\theta_{t_n} \omega_n) - y(\theta_{t_n} \omega)|
      +
       |y(\theta_{t_n} \omega) - y(\theta_t \omega)|
       $$
       from which, \eqref{omegam_p7} and the continuity of
       $y(\theta_t \omega)$ in $t$, we obtain
      $$ y(\theta_{t_n} \omega_n) \to  y(\theta_t \omega),
      \quad \mbox{as} \  n \to \infty.
      $$
      
      (ii). Let $\omega_n \to \omega$ with $\omega_n \in \Omega_m$
      and $\omega \in \Omega$. We will show $\omega \in \Omega_m$.
      Since $\omega_n \in \Omega_m$, we have for all
      $|t| \ge m$ and $n \in \N$,
      $$
     | \omega_n (t) | \le |t|
     \quad \mbox{and} 
     \quad 
     |\int_0^t |y(\theta_{r} \omega_n)|^2 dr   | 
     \le {\frac 1\alpha} |t|.
     $$
     Taking the limit as $n \to \infty$,  by (i)   we obtain for all $|t| \ge m$,
       $$
     | \omega (t) | \le |t|
     \quad \mbox{and} 
     \quad 
     |\int_0^t |y(\theta_{r} \omega)|^2 dr   | 
     \le {\frac 1\alpha} |t|.
     $$
     This indicates $\omega \in \Omega_m$ and hence
     $\Omega_m$ is a closed subset of $\Omega$   for all
     $m\in \N$.
     
     Given $\omega \in \Omega$, 
     since $|{\frac {\omega(t)}t}| \to 0$ as $ |t| \to \infty$, 
      there exists $T_1>0$ such that
     for all $ |t| \ge T_1$,
     \be\label{omegam_p20}
     |\omega (t) | \le |t|.
     \ee
     On the other hand, by the ergodicity theorem,
     $$
     \lim_{|t| \to \infty}
     {\frac 1t} \int_0^t |y(\theta_r \omega)|^2 dr
     = {\frac 1{2\alpha}},
     $$
     which shows that   there exists $T_2 \ge T_1$  such that
     for all $|t| \ge T_2$,
     \be\label{omegam_p22}
     | \int_0^t |y(\theta_r \omega)|^2 dr |
     \le  {\frac 1{\alpha}} |t|.
     \ee
     By \eqref{omegam_p20}-\eqref{omegam_p22} we find
     $\omega \in \Omega_m$ for $m >T_2$ and thus
     $\Omega \subseteq \bigcup\limits_{m=1}^\infty \Omega_m$.
     It is evident that $ \bigcup\limits_{m=1}^\infty \Omega_m \subseteq
     \Omega$, and hence  $\Omega =\bigcup\limits_{m=1}^\infty \Omega_m$.
     
     (iii).  By \eqref{omegam_a1} we have, for all
     $\omega \in \Omega_m$, $t \le -m$  and $\tau \le 0$,
    $$ |y(\theta_t \omega)|
     \le
     \alpha \int_{-\infty}^0 e^{\alpha \tau} |\omega (t+\tau)| d\tau
     +
     |\omega (t) |
     $$
     $$
      \le
     \alpha \int_{-\infty}^0 e^{\alpha \tau} (|t| + |\tau|) d\tau
     +
     | t |
      \le 2 | t | +
     \alpha \int_{-\infty}^0 e^{\alpha \tau}  |\tau| d\tau
     $$
     which yields \eqref{omegam1}.
     \end{proof}
   
   We mention that a  similar subset of $\Omega$ was 
   introduced in \cite{car1} to deal with the measurability
   of solutions of  stochastic equations  without uniqueness, 
   which  works   for  many  equations  perturbed by
  {\it  additive} noise, but does not work for equations with
  {\it multiplicative}  noise like \eqref{intro1}.
  That is why,  in the present  paper, we consider the subset
  $\Omega_m$ given by \eqref{defom} and show this  subset
  can be used to  establish the measurability of the solutions
  of \eqref{intro1}.
  
  From now on,  we write
  $\calf_{\Omega_m}$ for    the
  trace $\sigma$-algebra of $\calf$ with respect  to
  $\Omega_m$, and $P_{\Omega_m}$ for the 
  restriction of $P$ to $\calf_{\Omega_m}$.
  Sometimes, it is convenient to consider the measurable
  space $(\Omega_m, \calf_{\Omega_m}, P_{\Omega_m})$
  instead of the original space $(\Omega, \calf, P)$.
  Since $\Omega_m$ is  a closed subset of $\Omega$ by Lemma \ref{omegam},
  we find that  $\calf_{\Omega_m} \subset \calf$.

   It is convenient to  reformulate system \eqref{pde1}-\eqref{pde2}as
\be
\label{abspde}
{\frac {d \xi}{dt}} =A \xi + G(x, t, \omega, \xi),
\ee
where
$$
\xi = \left (
\begin{array}{l}
u\\
v
\end{array}
\right ), \quad 
A= \left (\begin{array}{cc}
0 \ & 1\\
\Delta \ & \delta -\alpha\\
\end{array}
\right ),
$$
and
\be\label{G1}
G(x,t, \omega, \xi) =
\left (
\begin{array}{c}
- \delta u +  \varepsilon y(\theta_t \omega) u\\
g  -(\lambda + \delta^2 -\alpha \delta) u -f(x, u)
 - \varepsilon y(\theta_t \omega) v
 -\varepsilon 
 \left ( 
 \varepsilon y(\theta_t \omega) -2\delta 
 \right ) y(\theta_t \omega)u
\end{array}
\right ).
\ee
As proved in \cite{bal1},  if $ \xi \in  C([\tau, \infty) ,  H^1(\R^n) \times L^2(\R^n))$
is a weak solution of \eqref{pde1}-\eqref{pde3}, then $\xi$ satisfies
\be\label{absol}
\xi (t, \tau, \omega, \xi_0)
=e^{A(t-\tau)}\xi_0
+\int_\tau^t e^{A(t-s)} G(\cdot, s,  \omega, \xi(s, \tau, \omega, \xi_0)) ds.
\ee

The mapping $G$ given by\eqref{G1} has the following property
which proves useful later.

\begin{lem}
\label{Gpro}
Suppose \eqref{f1}-\eqref{f3} hold. Then we have

(i) if $\xi_n \rightharpoonup \xi$ in $\hone \times \ltwo$
and $\omega_n \to \omega$ with $\omega_n, \omega \in \Omega_m$,
then
$G(\cdot, t, \omega_n, \xi_n) \rightharpoonup
G(\cdot, t, \omega, \xi)$
 in $\hone \times \ltwo$
 uniformly for  $ t$  in compact intervals of  $R$.

(ii) if $\xi_n \to  \xi$ strongly  in $\hone \times \ltwo$
and $\omega_n \to \omega$ with $\omega_n, \omega \in \Omega_m$,
then
$ 
G(\cdot, t, \omega_n, \xi_n) \to
G(\cdot, t, \omega, \xi)$
 strongly in  $  \hone \times \ltwo$
 uniformly for  $ t $  in compact intervals of $\R$.
\end{lem}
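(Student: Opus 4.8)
The plan is to handle $G=(G_1,G_2)$ componentwise and, within $G_2$, term by term, isolating the Nemytskii contribution $f(\cdot,u_n)$ as the only delicate piece. Every remaining term is either the fixed forcing $g$, the fixed scalar multiple $(\lambda+\delta^2-\alpha\delta)u_n$, or a product $a_n(t)w_n$ in which $w_n$ is $u_n$ or $v_n$ and $a_n(t)$ is one of the scalars $\varepsilon y(\theta_t\omega_n)-\delta$, $\varepsilon y(\theta_t\omega_n)$, or $\varepsilon(\varepsilon y(\theta_t\omega_n)-2\delta)y(\theta_t\omega_n)$. For such a product I would write $a_n(t)w_n-a(t)w=a_n(t)(w_n-w)+(a_n(t)-a(t))w$, where $a(t)$ is the corresponding limiting scalar. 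By Lemma \ref{omegam}(i), $a_n(t)\to a(t)$ uniformly on every compact $t$-interval, so the $a_n(t)$ are uniformly bounded there; pairing against a fixed test element and using $w_n\rightharpoonup w$ for part (i), or estimating the norm and using $w_n\to w$ for part (ii), shows each such term converges in the required sense, uniformly in $t$ on compact intervals, because all the $t$-dependence sits in the uniformly convergent scalars $a_n(t)$. This settles $G_1$ and all of $G_2$ except the $f$-term.

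The crux is the convergence of $f(\cdot,u_n)$ in $\ltwo$, and here only the growth bound \eqref{f1} is used. Since $|f(x,u_n)|\le c_1|u_n|^\gamma+\phi_1(x)$ with $\phi_1\in\ltwo$, and since $\hone\hookrightarrow L^{2\gamma}(\R^n)$ in every admissible case --- this is exactly where $\gamma\le 3$ enters for $n=3$, via $\hone\hookrightarrow L^6(\R^3)$ --- the sequence $\{f(\cdot,u_n)\}$ is bounded in $\ltwo$ whenever $\{u_n\}$ is bounded in $\hone$. For part (ii), where $u_n\to u$ strongly in $\hone$ and hence in $L^{2\gamma}(\R^n)$ by the Sobolev embedding, I would invoke the continuity of the Nemytskii operator: the bound \eqref{f1} with $\phi_1\in\ltwo$ guarantees that $u\mapsto f(\cdot,u)$ maps $L^{2\gamma}(\R^n)$ continuously into $\ltwo$, so $f(\cdot,u_n)\to f(\cdot,u)$ in $\ltwo$. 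As this term is independent of $t$, the convergence is trivially uniform in $t$, which together with the scalar-coefficient estimates above completes (ii).

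For part (i) only $u_n\rightharpoonup u$ in $\hone$ is available, the Nemytskii operator is no longer continuous, and a compactness-and-identification argument replaces it. Along any subsequence, $\ltwo$-boundedness yields a further subsequence with $f(\cdot,u_{n_k})\rightharpoonup\chi$ in $\ltwo$; meanwhile $u_n\rightharpoonup u$ in $\hone$ gives $u_n\to u$ in $L^2_{loc}(\R^n)$ by the local Rellich--Kondrachov theorem, and a diagonal argument over an exhaustion of $\R^n$ by balls produces a sub-subsequence with $u_{n_k}\to u$ almost everywhere, whence $f(\cdot,u_{n_k})\to f(\cdot,u)$ a.e.\ by continuity of $f$. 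The standard fact that an $\ltwo$-bounded sequence converging a.e.\ converges weakly to its a.e.\ limit (proved by Egorov on a large ball together with the smallness of the $\ltwo$-tail of the test function) forces $\chi=f(\cdot,u)$; since this limit does not depend on the subsequence, the whole sequence satisfies $f(\cdot,u_n)\rightharpoonup f(\cdot,u)$ in $\ltwo$, again with no $t$-dependence. Combined with the scalar-coefficient terms handled above, this gives the uniform weak convergence asserted in (i). I expect this last step to be the main obstacle: on the unbounded domain no global compact embedding is available, so one must recover a.e.\ convergence from the local compactness of $\hone\hookrightarrow L^2_{loc}(\R^n)$ while drawing the $\ltwo$-bound from the critical embedding $\hone\hookrightarrow L^6(\R^3)$.
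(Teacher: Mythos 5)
Your proposal is correct and follows essentially the same route as the paper: isolate the Nemytskii term $f(\cdot,u_n)$, handle all scalar-coefficient terms via the uniform convergence $y(\theta_t\omega_n)\to y(\theta_t\omega)$ from Lemma \ref{omegam}(i), prove weak convergence of $f(\cdot,u_n)$ by $\ltwo$-boundedness plus a diagonal almost-everywhere argument on balls with identification of the weak limit, and upgrade to strong convergence in part (ii) using the strong $\hone$ (hence $L^{2\gamma}$) convergence of $u_n$. The only cosmetic differences are that the paper identifies the weak limit via Mazur's theorem where you use the Egorov-based fact, and in part (ii) the paper proves norm convergence $\|f(\cdot,u_n)\|\to\|f(\cdot,u)\|$ by dominated convergence and combines it with weak convergence, where you cite the continuity of the Nemytskii operator from $L^{2\gamma}(\R^n)$ to $\ltwo$.
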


\begin{proof}
(i). Suppose  $\xi_n = \left (
\begin{array}{c}
u_n\\
v_n
\end{array}
\right )$, 
$\xi = \left (
\begin{array}{c}
u\\
v
\end{array}
\right )$
and $\xi_n \rightharpoonup \xi$
in $\hone \times \ltwo$.
By \eqref{f1}  we find that
$f(\cdot, u_n)$ is bounded in
$\ltwo$ and hence there is
$\phi \in \ltwo$ such that, up to a subsequence,
\be\label{Gpro_p1}
f(\cdot, u_n)\rightharpoonup \phi
\  \mbox{in} \ \ltwo.
\ee
Let $B_k = \{ x \in \R^n: \| x\| < k\}$
for each positive integer $k$.
By the compactness of embedding $H^1(B_k) 
\hookrightarrow L^2(B_k)$,  for every fixed $k\in \N$,
there exists a subsequence (depending on $k$)
of $\{u_n\}_{n=1}^\infty$ 
that is convergent in $L^2(B_k)$ and almost everywhere  on $B_k$.
Then  by a diagonal process, we can extract a further subsequence
(which is still denoted by  $\{u_n\}_{n=1}^\infty$)
 such that 
\be\label{Gpro_p2}
u_n \to u \ \mbox{ a.e.  on  } \R^n,
\ee
which implies that
\be\label{Gpro_p3}
f(x, u_n (x)) \to  f(u(x))
 \ \mbox{ for  a.e.    }  x\in \R^n.
 \ee
 By \eqref{Gpro_p1},  \eqref{Gpro_p3} and the Mazur\rq{}s theorem,
 we get $\phi = f(u) $ and thus
 the entire sequence 
 $f(\cdot, u_n)\rightharpoonup \phi $ in $\ltwo$.
 This together with \eqref{G1}  and  Lemma \ref{omegam} (i) implies
 $G(\cdot, t, \omega_n, \xi_n)
 \rightharpoonup G(\cdot, t, \omega , \xi )$
 in $\hone \times \ltwo$
 uniformly for $t$ in a compact interval.

 (ii). By \eqref{f1} we have
 \be\label{Gpro_p4}
 f^2(x, u_n(x)) \le g_n(x)
 \quad  \mbox{for all } x\in \R^n,
 \ee
 where  $g_n(x) =2c_1^2 |u_n(x)|^{2\gamma}
 +\phi_1^2(x)$. If $u_n \to u$ in $\hone$, then
 $g_n \to g$ in $L^1(\R^n)$, which along with 
 \eqref{Gpro_p2}, \eqref{Gpro_p4} and the dominated
 convergence theorem yields
 \be\label{Gpro_p7}
 \ii f^2 (x, u_n(x)) dx \to \ii f ^2(x, u(x)) dx.
 \ee
 By \eqref{Gpro_p7} and the  weak convergence 
 of $f(\cdot, u_n)$ we get
 $f(\cdot, u_n) \to  f (\cdot, u)$ in $\ltwo$.
 Then using 
 \eqref{G1}  and  Lemma \ref{omegam} (i) we infer that
 $G(\cdot, t, \omega_n, \xi_n)
 \to  G(\cdot, t, \omega , \xi )$ in $\hone \times \ltwo$
 uniformly for $t$ in a compact interval.
\end{proof}

   The following lemma provides  estimates of solutions
   of \eqref{pde1}-\eqref{pde3}  which are uniform
   in time  and initial data on a bounded set.
     
    \begin{lem}
    \label{estf}
    Suppose \eqref{f1}-\eqref{f3} hold. Let
    $\varepsilon \ge 0$,  $\tau \in \R$, $T>0$, $M>0$ and
    $\omega_n \to \omega$ with
    $\omega_n, \omega \in \Omega_m$.
    Then there exists $C= C(\varepsilon, \tau,  T, M,  m, \omega)>0$
    such that the solutions of \eqref{pde1}-\eqref{pde3} satisfy
    $$
    \| u(t, \tau,  \omega_n, u_0)\|_{H^1}
    +
      \| v(t, \tau,  \omega_n, v_0)\|_{L^2}
      \le C
      $$
      for all  $n\in \N$, $t\in [\tau,  \tau+T]$  and $(u_0, v_0)\in \hone \times \ltwo$
      with $\|(u_0, v_0)\|_{H^1\times L^2} \le M$.
    \end{lem}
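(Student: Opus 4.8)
The plan is to run a Gronwall argument on the energy functional
\[
E(t) = \|v\|^2 + (\lambda + \delta^2 - \alpha\delta)\|u\|^2 + \|\nabla u\|^2 + 2\ii F(x,u)\,dx
\]
evaluated along the solution $(u,v)=(u(\cdot,\tau,\omega_n,u_0),v(\cdot,\tau,\omega_n,v_0))$, using the energy equation \eqref{ener1} written for the parameter $\omega_n$ (so that every $y(\theta_t\omega)$ is replaced by $y(\theta_t\omega_n)$). The one ingredient that forces uniformity in $n$ is the following: since $\omega_n \to \omega$ with all $\omega_n,\omega \in \Omega_m$, Lemma \ref{omegam}(i) gives $y(\theta_t\omega_n) \to y(\theta_t\omega)$ uniformly on the compact interval $[\tau,\tau+T]$, and as $y(\theta_t\omega)$ is continuous there this produces a finite bound $L := \sup_{n\in\N}\sup_{t\in[\tau,\tau+T]} |y(\theta_t\omega_n)|<\infty$ depending only on $\tau,T,m,\omega$. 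Thus every occurrence of $y(\theta_t\omega_n)$ in \eqref{ener1} is controlled by the single constant $L$, uniformly in $n$.

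Next I would estimate the right-hand side of \eqref{ener1} term by term. The terms $2(g,v)$, $-2\varepsilon y(\theta_t\omega_n)\|v\|^2$, the cross term $-2\varepsilon(\varepsilon y(\theta_t\omega_n)-2\delta)y(\theta_t\omega_n)(u,v)$, and the terms $2\varepsilon(\lambda+\delta^2-\alpha\delta)y(\theta_t\omega_n)\|u\|^2$ and $2\varepsilon y(\theta_t\omega_n)\|\nabla u\|^2$ are all bounded, via $|y(\theta_t\omega_n)|\le L$ and Young's inequality, by $C(\|u\|^2+\|v\|^2+\|\nabla u\|^2)+\|g(t)\|^2$ with $C=C(\alpha,\lambda,\delta,\varepsilon,L)$. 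The delicate term is the nonlinear contribution $2\varepsilon y(\theta_t\omega_n)(f(x,u),u)$ on the right, paired with the dissipative $2\delta(f(x,u),u)$ on the left, since $f$ is of critical growth and $(f(x,u),u)$ cannot be discarded. Here I would invoke the structure conditions: \eqref{f2} gives $(f(x,u),u)\ge c_2\ii F(x,u)\,dx+\ii\phi_2\,dx$, while \eqref{f3} gives $c_3\|u\|_{L^{\gamma+1}}^{\gamma+1}\le \ii F(x,u)\,dx+\|\phi_3\|_{L^1}$, so that together with \eqref{f1},
\[
|(f(x,u),u)| \le c_1\|u\|_{L^{\gamma+1}}^{\gamma+1}+\|\phi_1\|\,\|u\| \le \frac{c_1}{c_3}\Big(\ii F(x,u)\,dx+\|\phi_3\|_{L^1}\Big)+\|\phi_1\|\,\|u\|.
\]
This traps $(f(x,u),u)$ both below (for the left-hand dissipation) and above in terms of $\ii F\,dx$, the quantity already sitting inside $E$. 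Hence the nonlinear terms fold into $C_1 E(t)$ plus constants, and using the coercivity $E(t)+2\|\phi_3\|_{L^1}\ge c_0(\|u\|_{\hone}^2+\|v\|^2)$ — valid once $\delta$ is fixed so that $\lambda+\delta^2-\alpha\delta>0$ — to re-express the norm terms through $E$, I arrive at a differential inequality $\tfrac{d}{dt}E(t)\le C_2 E(t)+\|g(t)\|^2+C_3$.

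Finally I would bound the initial energy: \eqref{f1} yields $|F(x,s)|\le \tfrac{c_1}{\gamma+1}|s|^{\gamma+1}+\phi_1(x)|s|$, so the Sobolev embedding $\hone\hookrightarrow L^{\gamma+1}(\R^n)$ (which holds for $\gamma\le 3$ when $n=3$, and for all admissible $\gamma$ when $n\le 2$) gives $E(\tau)\le C(M)$ uniformly over $\|(u_0,v_0)\|_{\hone\times\ltwo}\le M$. Applying Gronwall's inequality on $[\tau,\tau+T]$, together with $\int_\tau^{\tau+T}\|g(s)\|^2\,ds<\infty$ since $g\in L^2_{loc}(\R,\ltwo)$, produces $E(t)\le e^{C_2 T}\big(C(M)+\int_\tau^{\tau+T}\|g\|^2\,ds+C_3 T\big)$ for all $t\in[\tau,\tau+T]$ and all $n$, and the coercivity bound then converts this into $\|u(t)\|_{\hone}+\|v(t)\|_{\ltwo}\le C$ with $C=C(\varepsilon,\tau,T,M,m,\omega)$. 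The genuinely non-routine point is not the finite-time bound on the critical term itself (that is handled cleanly by \eqref{f2}--\eqref{f3}), but guaranteeing the constant does not deteriorate as $n\to\infty$; this is secured precisely by the uniform bound $L$ on $y(\theta_t\omega_n)$ coming from the uniform convergence in Lemma \ref{omegam}(i), without which one would only obtain $n$-dependent estimates.
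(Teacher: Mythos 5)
Your proposal is correct and takes essentially the same route as the paper: both run a Gronwall argument on the energy functional from \eqref{ener1}, control the nonlinear term through $\ii F(x,u)\,dx$ via \eqref{f1} and \eqref{f3}, bound the initial energy by Sobolev embedding, and secure uniformity in $n$ exactly as you do, through the uniform convergence $y(\theta_t\omega_n)\to y(\theta_t\omega)$ on compact intervals given by Lemma \ref{omegam}(i). The only cosmetic difference is that the paper works with \eqref{ener1} at $\delta=0$ (so the left-hand dissipation is nonnegative and \eqref{f2} is never invoked), whereas you keep $\delta$ general and use \eqref{f2} to handle the term $2\delta(f(x,u),u)$; both are sound.
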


           \begin{proof}
           By \eqref{ener1} with $\delta =0$ we get 
 $$
   {\frac d{dt}}  \left (
  \| v \|^2 + \lambda  \| u \|^2
  + \| \nabla u \|^2 + 2 \int_{\mathbb{R}^n} F(x, u)dx
 \right )
 $$
 $$
  \le   2(g, v) -2  \varepsilon y(\theta_t \omega) \|v\|^2
 -2\varepsilon^2  
 |y(\theta_t \omega)|^2 (u,v)
 $$
 \begin{equation}
 \label{estf_p1}
 +2 \varepsilon  \lambda
  y(\theta_t \omega) \| u \|^2
  + 2\varepsilon y(\theta_t \omega) \| \nabla u \|^2
  +2 \varepsilon y(\theta_t \omega) (f(x,u), u).
 \end{equation}
 By  \eqref{f1} and \eqref{f3}, we  have
 $$
 2 \varepsilon y(\theta_t \omega) (f(x,u), u)
 \le 2 \varepsilon c_1   | y(\theta_t \omega)|
 \int_{\R^n} |u|^{\gamma+1} dx
 + \varepsilon | y(\theta_t \omega)|
 \|\phi_1 \|^2 
 +  \varepsilon | y(\theta_t \omega)| \| u \|^2
 $$
\begin{equation}
\label{estf_p2}
 \le  \varepsilon c   | y(\theta_t \omega)|
 \int_{\R^n}   F(x,u)  dx
 + \varepsilon c | y(\theta_t \omega)|
 +  \varepsilon | y(\theta_t \omega)| \| u \|^2.
\end{equation}
It follows   from \eqref{estf_p1}-\eqref{estf_p2} that
 $$
   {\frac d{dt}}  \left (
  \| v \|^2 + \lambda  \| u \|^2
  + \| \nabla u \|^2 + 2 \int_{\mathbb{R}^n} F(x, u)dx
 \right )
 $$
 $$
 \le
 \| g\|^2 + (1+ 2 \varepsilon | y(\theta_t \omega)| 
 + \varepsilon^2 | y(\theta_t \omega)| ^2 ) \| v\|^2
 +   \varepsilon | y(\theta_t \omega)| ( 1+ 2\lambda
 + \varepsilon | y(\theta_t \omega)|  ) \| u\|^2
 $$
 $$
 + 2 \varepsilon | y(\theta_t \omega)|  \|\nabla u \|^2
 +\varepsilon c | y(\theta_t \omega)| \ii F(x, u) dx
 +\varepsilon c | y(\theta_t \omega)| .
 $$
 Integrating the above over $(\tau , t)$ 
 and replacing 
  $\omega$ by $ \omega_n$ we obtain
  $$
  \| v(t, \tau,  \omega_n, v_0) \|^2 
  + \lambda  \| u (t, \tau,  \omega_n, u_0)\|^2
  $$
  $$
  + \| \nabla u(t, \tau,  \omega_n, u_0) \|^2 
  + 2 \ii
  F(x, u(t, \tau,  \omega_n, u_0))dx
  $$
   $$
   \le 
   \| v_0 \|^2 
  + \lambda  \| u _0\|^2
  + \| \nabla u _0 \|^2 
  + 2  \ii  F(x, u_0)dx 
  $$
  $$
 +
  \int_{\tau }^t \| g(s)\|^2 ds
   +\int_{\tau }^t  (1+ 2 \varepsilon | y(\theta_{s} \omega_n)| 
 + \varepsilon^2 | y(\theta_{s} \omega_n)| ^2 )
  \| v(s, \tau, \omega_n, v_0)\|^2ds
 $$
 $$
 + \varepsilon \int_{\tau }^t   | y(\theta_{s} \omega_n)| ( 1+ 2\lambda
 + \varepsilon | y(\theta_{s} \omega_n)|  )
  \| u (s, \tau,  \omega_n, u_0) \|^2 ds
 $$
 $$
 + 2 \varepsilon \int_{\tau }^t | y(\theta_{s} \omega_n)| 
  \|\nabla u(s, \tau, \omega_n, u_0) \|^2 ds
  $$
  \be\label{estf_p4}
 +\varepsilon c  \int_{\tau }^t \ii  | y(\theta_{s} \omega_n)|  F(x, u) dxds
 +\varepsilon c \int_{\tau }^t  | y(\theta_{s} \omega_n)| ds .
 \ee
 Since $\omega_n \to \omega$ and $\omega_n, \omega\in \Omega_m$, 
 by Lemma \ref{omegam} we know that
 $y(\theta_r \omega_n) \to y(\theta_r \omega)$ uniformly
 for $r\in [\tau, \tau+T]$ as $n \to \infty$. 
 Therefore, there exists $N=N(T, \tau,  \omega)\ge 1$
 such that for all $n\ge N$ and $r\in [\tau, \tau+T]$,
 $$
 |y(\theta_r \omega_n)| \le 1+  |y(\theta_r \omega)|,
 $$
which together with the continuity of 
$y(\theta_r \omega)$ in $r$ implies that   there exists $C_1=C_1(T, \tau, \omega)>0$
such that for all  $n\ge N$ and $r\in  [\tau, \tau+T]$,
\be\label{estf_p6}
 |y(\theta_r \omega)| \le C_1
 \quad  \mbox{and} \quad |y(\theta_r \omega_n)| \le 1+ C_1 .
 \ee
   It follows from \eqref{estf_p4}-\eqref{estf_p6}  that
   there exists      $C_2=C_2(T, \tau,  \omega)>0$
such that for all  $n\ge N$ and
 $t \in  [\tau, \tau+T]$,
   $$
  \| v(t, \tau,  \omega_n, v_0) \|^2 
  + \lambda  \| u (t, \tau,  \omega_n, u_0)\|^2
  $$
  $$
  + \| \nabla u(t, \tau,  \omega_n, u_0) \|^2 
  + 2\ii
  F(x, u(t, \tau,   \omega_n, u_0))dx
  $$
   $$
   \le 
   \| v_0 \|^2 
  + \lambda  \| u _0\|^2
  + \| \nabla u _0 \|^2 
  + 2  \ii  F(x, u_0)dx 
  $$
  $$
 +
  \int_{\tau }^t \| g(s)\|^2 ds
   +(1 +\varepsilon + \varepsilon^2)C_2
   \int_{\tau }^t  
  \| v(s, \tau, \omega_n, v_0)\|^2ds
 $$
 $$
 + \varepsilon( 1+ \varepsilon)C_2 
 \int_{\tau }^t   
  \| u (s, \tau, \omega_n, u_0) \|^2 ds
 +   \varepsilon C_2  \int_{\tau }^t
  \|\nabla u(s, \tau,  \omega_n, u_0) \|^2 ds
  $$
  \be\label{estf_p10}
 +\varepsilon c  \int_{\tau }^t \ii  | y(\theta_{s} \omega_n)|  F(x, u) dxds
 +\varepsilon C_2 .
 \ee
 By \eqref{f3} we have $F(x,u) +\phi_3(x) \ge 0$
 for all $x\in \R^n$, from which and \eqref{estf_p6} we find that
 there exists 
   $C_3=C_3(T, \tau, \omega)>0$
such that for all  $n\ge N$ and   $t \in  [\tau, \tau+T]$,
 \be\label{estf_p11}
 \varepsilon c \int_{\tau }^t \ii  | y(\theta_{s} \omega_n)|  F(x, u) dxds
  \le \varepsilon C_3\int_{\tau }^t \ii F(x, u(s)) dxds + \varepsilon C_3.
  \ee
  Let $C_4 =\max \{ (1+\varepsilon +\varepsilon^2)C_2,
  \varepsilon(1+\varepsilon)C_2\lambda^{-1},
  \varepsilon C_2,  {\frac 12} \varepsilon C_3 \}$.
  By  \eqref{estf_p10} and \eqref{estf_p11}  we get, 
             for all  $n\ge N$
             and   $t \in  [\tau, \tau+T]$,
   $$
  \| v(t, \tau,  \omega_n, v_0) \|^2 
  + \lambda  \| u (t, \tau,  \omega_n, u_0)\|^2
  $$
  $$
  + \| \nabla u(t, \tau,   \omega_n, u_0) \|^2 
  + 2\ii
  F(x, u(t, \tau,  \omega_n, u_0))dx
  $$
   $$
   \le 
   \| v_0 \|^2 
  + \lambda  \| u _0\|^2
  + \| \nabla u _0 \|^2 
  + 2  \ii  F(x, u_0)dx 
  $$
  $$
 +
  \int_{\tau }^t \| g(s)\|^2 ds
   +C_4
   \int_{\tau }^t  
  \| v(s, \tau,  \omega_n, v_0)\|^2ds
 $$
 $$
 + C_4 \lambda
 \int_{\tau }^t  
  \| u (s, \tau, \omega_n, u_0) \|^2 ds
 +    C_4  \int_{\tau }^t
  \|\nabla u(s, \tau,  \omega_n, u_0) \|^2 ds
  $$
  \be\label{estf_p12}
 +\varepsilon C_3  \int_{\tau }^t \ii    F(x, u(s)) dx ds
 +\varepsilon (C_2+C_3) .
 \ee
 Since $C_4 \ge {\frac 12} \varepsilon C_3$, by  \eqref{f3} we see that
 there exists $C_5>0$ such that
 \be\label{estf_p15}
 \varepsilon C_3  \int_{\tau }^t \ii    F(x, u(s) ) dx ds
 \le 2C_4 \int_{\tau }^t \ii    F(x, u(s)) dxds  +C_5 (1 +\varepsilon).
 \ee
 It follows from \eqref{estf_p12}-\eqref{estf_p15}
 that
   for all  $n\ge N$ and  $t \in  [\tau, \tau+T]$,
   $$
  \| v(t, \tau,  \omega_n, v_0) \|^2 
  + \lambda  \| u (t, \tau,  \omega_n, u_0)\|^2
  $$
  $$
  + \| \nabla u(t, \tau,   \omega_n, u_0) \|^2 
  + 2\ii
  F(x, u(t, \tau,   \omega_n, u_0))dx
  $$
   $$
   \le 
   \| v_0 \|^2 
  + \lambda  \| u _0\|^2
  + \| \nabla u _0 \|^2 
  + 2  \ii  F(x, u_0)dx 
   +C_4
   \int_{\tau }^t
  \| v(s, \tau,  \omega_n, v_0)\|^2ds
 $$
 $$
 + C_4\lambda
 \int_{\tau }^t  
  \| u (s, \tau,  \omega_n, u_0) \|^2 ds
 +    C_4  \int_{\tau }^t
  \|\nabla u(s, \tau,  \omega_n, u_0) \|^2 ds
  $$
  \be\label{estf_p20}
 + 2 C_4 \int_{\tau }^t  \ii    F(x, u(s)) dx ds
 + (1+ \varepsilon)C_6    +
  \int_{\tau }^t \| g(s)\|^2 ds.
 \ee
 By Gronwall\rq{}s inequality, we get from \eqref{estf_p20}
 that   for all  $n\ge N$  and  $t \in  [\tau, \tau+T]$,
  $$
  \| v(t, \tau,   \omega_n, v_0) \|^2 
  + \lambda  \| u (t, \tau,  \omega_n, u_0)\|^2
  $$
  $$
  + \| \nabla u(t, \tau,  \omega_n, u_0) \|^2 
  + 2\ii
  F(x, u(t, \tau,  \omega_n, u_0))dx
  $$
   $$
   \le  e^{C_4 T}
  \left ( \| v_0 \|^2 
  + \lambda  \| u _0\|^2
  + \| \nabla u _0 \|^2 
  + 2  \ii  F(x, u_0)dx 
  +(1+\varepsilon) C_6
   \right )
  +\int_{\tau }^t
  e^{-C_4(s-t)} g(s) ds,
  $$
  which along with \eqref{f1} and \eqref{f3}
  yields the desired estimates.
  \end{proof}

Next,  we prove the solutions of \eqref{pde1}-\eqref{pde3}
are uniformly small outside a large bounded domain
under  certain conditions.

\begin{lem}
\label{tai1}

 Suppose \eqref{f1}-\eqref{f3} hold,  $\varepsilon\ge 0$,   $\tau \in \R$,
 $T>0$,   and   $\omega_n\to \omega$
    with $\omega_n, \omega \in \Omega_m$.
    Let  $  (u(\cdot,\tau, \omega_n, u_{0,n}),
    v(\cdot,\tau, \omega_n, v_{0,n}) )$
     be a solution of \eqref{pde1}-\eqref{pde3}
    with initial data   
    $(u_{0,n}, v_{0,n})$ at initial time $\tau$.
    If  $(u_{0,n}, v_{0,n}) \to (u_0,v_0)$
    in $\hone \times \ltwo$, then for every
       $\eta>0$,  
     there exist    
       $N=N(\varepsilon,  \tau, T, m, \omega, \eta)  \ge 1$ 
       and $K=K(\varepsilon, \tau, T,  m,  \omega, \eta) \ge 1$
       such that  for all 
 $ n \ge  N$   and $t\in [\tau, \tau +T]$,
\be\label{tai1_1}
\int_{|x| \ge K} 
\left (
| u(t, \tau ,  \omega_n, u_{0,n}  ) |^2
+| \nabla u(t, \tau ,   \omega_n, u_{0,n}  ) |^2
+
| v(t, \tau ,   \omega_n, v_{0,n}  ) |^2 
\right ) dx \le \eta.
\ee
 \end{lem}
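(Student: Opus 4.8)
The plan is to establish the uniform tail estimate \eqref{tai1_1} by the standard cut-off function technique of \cite{wan1, wan9}, adapted to the present random non-autonomous setting. The key device is a smooth cut-off $\rho: \R^+ \to [0,1]$ with $\rho(s)=0$ for $0\le s\le 1$ and $\rho(s)=1$ for $s\ge 2$, so that $\rho(|x|^2/k^2)$ vanishes on the ball $|x|\le k$ and equals one outside $|x|\le \sqrt{2}\,k$. First I would multiply the $u$-equation \eqref{pde1} by $\rho(|x|^2/k^2)$ times an appropriate combination and the $v$-equation \eqref{pde2} by $\rho(|x|^2/k^2)\,v$, integrate over $\R^n$, and derive a weighted energy identity analogous to \eqref{ener1} but carrying the weight $\rho(|x|^2/k^2)$ throughout. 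The goal is a differential inequality of the form
\be\label{plan_ei}
\frac{d}{dt}\, E_k(t) \le C\, E_k(t) + R_k(t),
\ee
where $E_k(t)$ is the weighted energy
$\ii \rho(|x|^2/k^2)\bigl(|v|^2 + (\lambda+\delta^2-\alpha\delta)|u|^2 + |\nabla u|^2 + 2F(x,u)\bigr)\,dx$
and $R_k(t)$ collects all the remainder terms that must be shown to be small for large $k$.

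\textbf{The main steps.} Next I would track each term produced by the cut-off. The term in which the Laplacian is integrated against $\rho(|x|^2/k^2)\,v$ produces, after using \eqref{pde1} to substitute for $v$ in the style of the energy equation, an extra commutator contribution involving $\rhp$ supported on the annulus $k\le |x|\le \sqrt2\,k$; since $|\rhp|$ is bounded and $|\nabla(|x|^2/k^2)|\le C/k$ there, this whole term is bounded by $C k^{-1}\bigl(\|\nabla u\|^2+\|u\|^2+\|v\|^2\bigr)$, which is small uniformly in $t\in[\tau,\tau+T]$ thanks to the bound supplied by Lemma \ref{estf}. The forcing term $g$ contributes $\int_{|x|\ge k}|g(s,x)|^2\,dx$, which tends to zero as $k\to\infty$ by absolute continuity of the integral since $g\in L^2_{loc}(\R,\ltwo)$. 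The nonlinear terms are controlled exactly as in \eqref{estf_p2} using \eqref{f1} and \eqref{f3} so that $\rho(|x|^2/k^2)F(x,u)$ can be absorbed into the left side, the leftover pieces contributing $\int_{|x|\ge k}(\phi_1^2+\phi_3)\,dx$, which is small for large $k$ since $\phi_1\in\ltwo$ and $\phi_3\in L^1$. The random coefficients $y(\theta_s\omega_n)$ are handled by \eqref{estf_p6}: because $\omega_n\to\omega$ in $\Omega_m$, Lemma \ref{omegam}(i) gives a uniform bound $|y(\theta_s\omega_n)|\le 1+C_1$ for $s\in[\tau,\tau+T]$ and all $n\ge N$, so these coefficients never destroy the estimate.

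\textbf{Concluding step.} Having assembled \eqref{plan_ei} with $R_k(t)$ small, I would apply Gronwall's inequality on $[\tau,t]\subseteq[\tau,\tau+T]$. The initial value $E_k(\tau)$ involves $\ii \rho(|x|^2/k^2)\bigl(|v_{0,n}|^2+\dots+2F(x,u_{0,n})\bigr)\,dx$; since $(u_{0,n},v_{0,n})\to(u_0,v_0)$ in $\hone\times\ltwo$, the convergence of the initial data together with the absolute continuity of integrals lets me choose $N$ and $K$ so that this initial tail energy is below $\eta$ for all $n\ge N$, uniformly. Combining the small initial energy, the small remainder integral, and the factor $e^{C_4 T}$ yields a bound on $E_k(t)$ of size $O(\eta)$, from which \eqref{tai1_1} follows after discarding the sign-definite nonlinear contribution via \eqref{f3}. \emph{The hard part} will be bookkeeping the commutator term from the Laplacian correctly and verifying that the $n$-dependence enters only through the uniformly-bounded quantities $y(\theta_s\omega_n)$ and the convergent initial data, so that a single choice of $N$ and $K$ works simultaneously for all large $n$; once the uniform energy bound from Lemma \ref{estf} and the uniform convergence of $y(\theta_s\omega_n)$ are in hand, the rest is a routine adaptation of the deterministic tail argument.
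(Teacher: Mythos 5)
Your proposal is correct and follows essentially the same route as the paper's own proof: the same cut-off function $\rho(|x|^2/k^2)$, the same weighted energy inequality with the commutator term bounded by $Ck^{-1}(\|\nabla u\|^2+\|v\|^2)$ and made uniformly small via Lemma \ref{estf}, the same uniform control of $y(\theta_t\omega_n)$ via Lemma \ref{omegam}(i), the same tail-smallness of $\phi_1$, $\phi_3$, $g$ and of the convergent initial data, and the same Gronwall step followed by discarding the $F$-term using \eqref{f3}. No gaps.
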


 \begin{proof}
 We here  formally derive  the estimate of \eqref{tai1_1} 
 which  
   can be justified  by a  limiting method  as in \cite{bal1}.
   Let  $\rho: \R \to \R$ be a smooth function such that
  $0 \leq \rho \leq 1$ and
\be\label{rho}
\rho (s)  = 
    0 \ \mbox{    for  }     |s| < 1   
    \quad \mbox{and} \quad 
  \rho (s)  =  1 \ \mbox{   for  }   |s| > 2.
  \ee

 Setting $\delta =0$ and taking
  the inner product of \eqref{pde2}
 with $\rho\left ({\frac {|x|^2}{k^2}}  \right ) v$ in $L^2(\mathbb{R}^n)$, we get
 $$
  {\frac 12} {\frac d{dt}}
 \int_{\mathbb{R}^n}
  \rho \left ({\frac {|x|^2}{k^2}}  \right )|v|^2 dx
  + \alpha \int_{\mathbb{R}^n}
  \rho\left ({\frac {|x|^2}{k^2}}  \right ) |v|^2 dx
  $$
  $$
  + \lambda \int_{\mathbb{R}^n}
  \rho\left ({\frac {|x|^2}{k^2}}  \right ) uv dx
  -\int_{\mathbb{R}^n}
  \rho\left ({\frac {|x|^2}{k^2}}  \right )v
   \Delta u  dx
   $$
  $$
  + \int_{\mathbb{R}^n}
  \rho\left ({\frac {|x|^2}{k^2}}  \right )  f(x,u) v dx
  = \int_{\mathbb{R}^n}
  \rho\left ({\frac {|x|^2}{k^2}}  \right )
   g v dx 
   $$
 $$
   - \varepsilon y(\theta_t \omega)  \int_{\mathbb{R}^n}
  \rho \left ({\frac {|x|^2}{k^2}}  \right )|v|^2 dx
  -\varepsilon^2   y^2 (\theta_t \omega)  
    \int_{\mathbb{R}^n}
  \rho\left ({\frac {|x|^2}{k^2}}  \right ) uv dx,
  $$
  which along with \eqref{pde1} implies
  $$
{\frac d{dt}}   \int_{\mathbb{R}^n}
  \rho\left ({\frac {|x|^2}{k^2}}  \right )
  \left (|v|^2 +
  \lambda   | u  |^2
  +  | \nabla u  |^2 + 2   F(x, u)
 \right ) dx
  +2 \alpha\int_{\mathbb{R}^n}
  \rho\left ({\frac {|x|^2}{k^2}}  \right )
     | v |^2 dx
 $$
 $$
= 2  \int_{\mathbb{R}^n}
  \rho\left ({\frac {|x|^2}{k^2}}  \right )
   g v dx 
   -  2\varepsilon y(\theta_t \omega)  \int_{\mathbb{R}^n}
  \rho \left ({\frac {|x|^2}{k^2}}  \right )|v|^2 dx
  $$
  $$
  - 2\varepsilon^2  y^2 (\theta_t \omega)   \int_{\mathbb{R}^n}
  \rho\left ({\frac {|x|^2}{k^2}}  \right ) uv dx
  +2 \varepsilon  \lambda 
  y(\theta_t \omega) 
   \int_{\mathbb{R}^n}
  \rho \left ({\frac {|x|^2}{k^2}}  \right )|u|^2 dx
  $$
  $$
  + 2\varepsilon y(\theta_t \omega)
    \int_{\mathbb{R}^n}
  \rho \left ({\frac {|x|^2}{k^2}}  \right )|\nabla u|^2 dx 
 +2 \varepsilon y(\theta_t \omega) 
  \int_{\mathbb{R}^n}
  \rho \left ({\frac {|x|^2}{k^2}}  \right )
 f(x,u)  u dx
 $$
  \begin{equation}
 \label{tai1_p1}
  - 4  \int_{k\le |x| \le \sqrt{2} k}
  \rho^\prime\left ({\frac {|x|^2}{k^2}}  \right )
  v \nabla u {\frac x{k^2}} dx.
  \ee
   By \eqref{f1} and \eqref{f3} we  have
 $$
 2 \varepsilon |y(\theta_t \omega) |
 \ii
  \rho \left ({\frac {|x|^2}{k^2}}  \right )
 f(x,u)  u dx
 \le  \varepsilon C  |y(\theta_t \omega)| 
 \ii
  \rho\left ({\frac {|x|^2}{k^2}}  \right )
  F(x,u) dx
 $$
 \be\label{tai1_p3}
 +\varepsilon C | y(\theta_t \omega) |
 \ii
    \rho\left ({\frac {|x|^2}{k^2}}  \right ) |u|^2dx
     +\varepsilon C  |y(\theta_t \omega) |
 \ii
    \rho\left ({\frac {|x|^2}{k^2}}  \right ) 
    ( |\phi_1|^2 + |\phi_3| )  dx.
\ee
 By \eqref{tai1_p1}-\eqref{tai1_p3} and 
   Young\rq{}s inequality,   we obtain
   (after replacing $\omega$
   by $\omega_n$),
     $$
{\frac d{dt}}   \int_{\mathbb{R}^n}
  \rho\left ({\frac {|x|^2}{k^2}}  \right )
  \left (|v(t, \tau, \omega_n, v_{0,n})|^2 +
  \lambda   | u  (t, \tau, \omega_n,  u_{0,n}) |^2
  +  | \nabla u  |^2 + 2   F(x, u)
 \right ) dx 
 $$
 $$
\le 
  \ii 
  \rho \left ({\frac {|x|^2}{k^2}}  \right )|v(t, \tau, \omega_n, v_{0,n})|^2 
  \left (
  1+ 2\varepsilon |y(\theta_t \omega_n) |
  +\varepsilon^2|y(\theta_t \omega_n) |^2
  \right ) dx
  $$
  $$
   +  \ii 
  \rho \left ({\frac {|x|^2}{k^2}}  \right )| u(t, \tau, \omega_n, u_{0,n})|^2 
  \left ( 
   \varepsilon^2|y(\theta_t \omega_n) |^2
   +  2\varepsilon  \lambda |y(\theta_t \omega_n) |
   +  \varepsilon C   |y(\theta_t \omega_n) |
  \right ) dx
  $$
  $$
  + 2\varepsilon |y(\theta_t \omega_n)|
  \ii
  \rho \left ({\frac {|x|^2}{k^2}}  \right )|\nabla u|^2 dx 
+
\varepsilon C  |y(\theta_t \omega_n) |
  \int_{\R^n}
  \rho\left ({\frac {|x|^2}{k^2}}  \right )
  F(x,u) dx
 $$
 $$
     +\varepsilon C  | y(\theta_t \omega_n) |
  \int_{\R^n}
    \rho\left ({\frac {|x|^2}{k^2}}  \right ) 
    ( |\phi_1|^2 + |\phi_3| )  dx
    +    \int_{\R^n}
    \rho\left ({\frac {|x|^2}{k^2}}  \right ) 
    |g|^2  dx
 $$
\be\label{tai1_p7}
 +{\frac Ck} \left (\|\nabla  u(t, \tau, \omega_n, u_{0,n}) \|^2 
 + \|  v(t, \tau, \omega_n, v_{0,n})\|^2
  \right ).
  \ee
  Since $(u_{0,n}, v_{0,n}) \to (u_0,v_0)$ in $\hone \times \ltwo$
  and $\omega_n \to \omega$,  by Lemma \ref{estf} we find that
  $(u(t, \tau,  \omega_n, u_{0,n}),  v(t, \tau,  \omega_n, v_{0,n}) )$
  is uniformly bounded in $[\tau, \tau+T]$ for $n\in \N$, Therefore,
  there exists $K_1\ge 1$  such that   for    all $k \ge K_1$,
   $ t \in [\tau, \tau+T]$  and  $n\in \N$,
  \be
  \label{tai1_p9}
  {\frac Ck} \left (\|\nabla  u(t, \tau, \omega_n, u_{0,n}) \|^2 
 + \|  v(t, \tau, \omega_n, v_{0,n})\|^2
  \right )
  \le \eta.
  \ee
  On the other hand, by Lemma \ref{omegam} (i) we know
  $y(\theta_t \omega_n) \to  y(\theta_t \omega)$
  uniformly  for  $t\in [\tau, \tau +T]$ as $n \to \infty$, and thus
  there exists $C_1>0$ and $N_1 \ge 1$  such that
  for all $n \ge N_1$  and $t\in [\tau, \tau +T]$,
  \be\label{tai1_p20}
  |y(\theta_t \omega)| \le C_1
  \quad  \mbox{and} \quad
  |y(\theta_t \omega_n)|\le C_1.
  \ee
  By \eqref{f3}  and \eqref{tai1_p20} we get,
  for $n \ge N_1$,
  \be\label{tai1_p22}
  \varepsilon C  |y(\theta_t \omega_n) |
  \int_{\R^n}
  \rho\left ({\frac {|x|^2}{k^2}}  \right )
  F(x,u) dx
  $$
  $$
  \le
  \varepsilon C C_1 
  \int_{\R^n}
  \rho\left ({\frac {|x|^2}{k^2}}  \right )
  F(x,u) dx
  + 2 \varepsilon C C_1 
    \int_{\R^n}
  \rho\left ({\frac {|x|^2}{k^2}}  \right )
  |\phi_3 (x)| dx.
  \ee
  If follows from  \eqref{f3} and \eqref{tai1_p7}-\eqref{tai1_p22} that
  there exists $C_2>0$  such that  for all $n\ge N_1$,
  $k\ge K_1$  and
  $t \in [\tau, \tau+T]$,
    $$
{\frac d{dt}}   \ii
  \rho\left ({\frac {|x|^2}{k^2}}  \right )
  \left (|v(t, \tau, \omega_n, v_{0,n})|^2 +
  \lambda   | u  (t, \tau, \omega_n,  u_{0,n}) |^2
  +  | \nabla u  |^2 + 2   F(x, u)
 \right ) dx 
 $$
 $$
\le 
C_2
 \ii
  \rho\left ({\frac {|x|^2}{k^2}}  \right )
  \left (|v(t, \tau, \omega_n, v_{0,n})|^2 +
  \lambda   | u  (t, \tau, \omega_n,  u_{0,n}) |^2
  +  | \nabla u  |^2 + 2   F(x, u)
 \right ) dx 
$$
  \be\label{tai1_p30}
     +C_2  
\int_{|x| \ge k}
    \rho\left ({\frac {|x|^2}{k^2}}  \right ) 
    ( |\phi_1|^2 + |g|^2 + |\phi_3| )  dx
    +  \eta.
  \ee
  Note that  there exists  
  $K_2 \ge  K_1  $ such that   for all $k \ge K_2$,
  \be\label{tai1_p32}
    C_2   \int_{|x| \ge k}
    \rho\left ({\frac {|x|^2}{k^2}}  \right ) 
   (   |\phi_1  (x)|^2 + |g  (x)|^2 + |\phi_3  (x)|  ) dx
   \le \eta.
 \ee
 By \eqref{tai1_p30}-\eqref{tai1_p32} and Gronwall\rq{}s lemma, we
 obtain,  for all $n\ge N_1$,
  $k\ge K_2$  and
  $t \in [\tau, \tau+T]$,
  $$
  \ii
  \rho\left ({\frac {|x|^2}{k^2}}  \right )
  \left (|v(t, \tau, \omega_n, v_{0,n})|^2 +
  \lambda   | u  (t, \tau, \omega_n,  u_{0,n}) |^2
  +  | \nabla u  |^2 + 2   F(x, u)
 \right ) dx 
 $$
\be\label{tai1_p34}
 \le
  e^{C_2T} \ii
  \rho\left ({\frac {|x|^2}{k^2}}  \right )
  \left (| v_{0,n}|^2 +
  \lambda   |   u_{0,n} |^2
  +  | \nabla u_{0,n}  |^2 + 2   F(x, u_{0,n} )
 \right ) dx 
 + 2\eta C_2^{-1} e^{C_2T}.
\ee
Since $(u_0, v_0) \in \hone \times \ltwo$, there exists
$K_3\ge K_2$ such that for all $k \ge K_3$,
\be\label{tai1_p40}
\int_{|x| \ge k}
( |u_0(x)|^2
+|\nabla u_0(x)|^2
+|v_0(x)|^2 ) dx \le {\frac \eta{4}}.
\ee
Since $(u_{0,n}, v_{0,n})
\to (u_0,v_0)$ in $\hone \times \ltwo$,
there exists $N_2\ge N_1$  such that
for all $n\ge N_2$,
$$
\int_{|x| \ge k}
 ( |u_{0,n} (x) - u_0(x)|^2
+|\nabla u_{0,n} (x) -  \nabla u_0(x)|^2
+|v_{0,n} (x) - v_0(x)|^2 ) dx  
$$
$$
\le \ii 
 ( |u_{0,n} (x) - u_0(x)|^2
+|\nabla u_{0,n} (x) -  \nabla u_0(x)|^2
+|v_{0,n} (x) - v_0(x)|^2 ) dx \le {\frac \eta{4}}
$$
which together with \eqref{tai1_p40} 
implies that for all $n\ge N_2$
and $k\ge K_3$,
\be\label{tai1_p42}
\int_{|x| \ge k}
 ( |u_{0,n} (x)  |^2
+|\nabla u_{0,n} (x) |^2
+|v_{0,n} (x)  |^2 ) dx  
\le \eta.
\ee
By \eqref{f1}  and \eqref{tai1_p42} we find that
there exists $C_3>0$ such that
 for all $n\ge N_2$
and $k\ge K_3$,
$$
   e^{C_2T} \ii
  \rho\left ({\frac {|x|^2}{k^2}}  \right )
  \left (| v_{0,n}|^2 +
  \lambda   |   u_{0,n} |^2
  +  | \nabla u_{0,n}  |^2 + 2   F(x, u_{0,n} )
 \right ) dx 
 $$
 $$
 \le
   e^{C_2T} \int_{|x| \ge k} 
   (| v_{0,n}|^2 +
  \lambda   |   u_{0,n} |^2
  +  | \nabla u_{0,n}  |^2 ) dx
  $$
  \be\label{tai1_p50}
  +
  2 e^{C_2T} \int_{|x| \ge k}
  \rho\left ({\frac {|x|^2}{k^2}}  \right )
     F(x, u_{0,n}  ) dx 
     \le C_3 \eta.
    \ee
 By \eqref{f3}, \eqref{tai1_p34} and \eqref{tai1_p50}
 we get, 
  for all $n\ge N_2$,
  $k\ge K_3$  and
  $t \in [\tau, \tau+T]$,
   $$
  \int_{|x| \ge \sqrt{2} k}
  \left (|v(t, \tau, \omega_n, v_{0,n})|^2 +
  \lambda   | u  (t, \tau, \omega_n,  u_{0,n}) |^2
  +  | \nabla u  |^2  
 \right ) dx 
 $$
  $$
 \le  \ii
  \rho\left ({\frac {|x|^2}{k^2}}  \right )
  \left (|v(t, \tau, \omega_n, v_{0,n})|^2 +
  \lambda   | u  (t, \tau, \omega_n,  u_{0,n}) |^2
  +  | \nabla u  |^2  
 \right ) dx \le C_4\eta.
 $$
 This completes the proof.
 \end{proof}

    Next, we discuss the weak and strong continuity 
    of solutions of system \eqref{pde1}-\eqref{pde3}.
    
    \begin{lem}
    \label{cprv}
    Suppose \eqref{f1}-\eqref{f3} hold,  $\tau \in \R$,
    $t_n \to t$ with $t_n \ge \tau$  and   $\omega_n\to \omega$
    with $\omega_n, \omega \in \Omega_m$.
    Let $ (u(\cdot,\tau, \omega_n, u_{0,n}),
    v(\cdot,\tau, \omega_n, v_{0,n}) )$
     be a solution of \eqref{pde1}-\eqref{pde3}
    with initial data   
    $(u_{0,n}, v_{0,n})$ at initial time $\tau$.
    
    (i)  If  $(u_{0,n}, v_{0,n}) \rightharpoonup 
     (u_0, v_0)$  in $\hone \times \ltwo$, 
     then system \eqref{pde1}-\eqref{pde3} has a solution
     $(u,v) = (u(\cdot, \tau, \omega, u_0),  v(\cdot, \tau, \omega, v_0))$
     with initial condition $(u_0,v_0)$ at initial time $\tau$ such that,
     up to  a subsequence,
     $$ 
     u(t_n, \tau, \omega_n, u_{0,n})
     \rightharpoonup   u(t, \tau, \omega, u_{0})
       \ \mbox{in} \ \hone  
     $$
     and
     $$
     v(t_n, \tau, \omega_n, v_{0,n})
     \rightharpoonup   v(t, \tau, \omega, v_{0})
       \ \mbox{in} \   \ltwo.
     $$
    
     (ii)  If  $(u_{0,n}, v_{0,n})  \to  
     (u_0, v_0)$  in $\hone \times \ltwo$, 
     then system \eqref{pde1}-\eqref{pde3} has a solution
     $(u,v) = (u(\cdot, \tau, \omega, u_0),  v(\cdot, \tau, \omega, v_0))$
     with initial condition $(u_0,v_0)$ at initial time $\tau$ such that,
      up to  a subsequence,
     $$ 
     u(t_n, \tau, \omega_n, u_{0,n})
     \to  u(t, \tau, \omega, u_{0})
       \ \mbox{in} \ \hone  
     $$
     and
     $$
     v(t_n, \tau, \omega_n, v_{0,n})
     \to   v(t, \tau, \omega, v_{0})
       \ \mbox{in} \   \ltwo.
     $$
    \end{lem}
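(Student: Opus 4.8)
The plan is to produce, in part (i), a weak limit that is itself a solution, and then in part (ii) to upgrade this weak convergence to strong convergence by means of the energy equation \eqref{ener1}. For part (i) I would begin with the uniform bound of Lemma \ref{estf}: a weakly convergent sequence $(u_{0,n},v_{0,n})$ is bounded, hence the solutions $\xi_n=(u_n,v_n)$ are bounded in $L^\infty(\tau,\tau+T;\hone\times\ltwo)$ for any $T>0$. Writing $\tfrac{d}{ds}\xi_n=A\xi_n+G(\cdot,s,\omega_n,\xi_n)$ as in \eqref{abspde}, the derivatives $\tfrac{d}{ds}\xi_n$ are bounded in $\ltwo\times H^{-1}(\R^n)$, so for every $\phi$ in the dense subspace $\ltwo\times\hone$ the scalar maps $s\mapsto(\xi_n(s),\phi)$ are equi-Lipschitz. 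A weak Arzel\`a--Ascoli argument, together with the separability of $\hone\times\ltwo$ and a diagonal extraction, then yields a subsequence and a weakly continuous $\xi$ with $\xi_n(s)\rightharpoonup\xi(s)$ for every $s\in[\tau,\tau+T]$; the same equicontinuity gives $\xi_n(t_n)\rightharpoonup\xi(t)$ for the moving times $t_n\to t$. To identify $\xi$ as a solution I would pass to the limit in the mild formula \eqref{absol}: the linear part converges because $e^{A(t-\tau)}$ is bounded, while Lemma \ref{Gpro}(i) gives $G(\cdot,s,\omega_n,\xi_n(s))\rightharpoonup G(\cdot,s,\omega,\xi(s))$ for each $s$, so that the uniformly bounded integrands may be passed to the limit by dominated convergence tested against a fixed element. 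The critical exponent causes no difficulty here, since only weak convergence is required.

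For part (ii) the limit $\xi$ from (i) is again a solution, and it remains to promote weak to strong convergence in $\hone\times\ltwo$. As this is a Hilbert space it suffices to prove convergence of norms, which I would organise through the energy $\mathcal{E}_n(s)=\|v_n\|^2+\lambda\|u_n\|^2+\|\nabla u_n\|^2+2\ii F(x,u_n)\,dx$ associated with \eqref{ener1} at $\delta=0$. Two inputs feed the argument. First, the tail estimate of Lemma \ref{tai1}, uniform on $[\tau,\tau+T]$, combined with the compactness of $H^1(B_K)\hookrightarrow L^q(B_K)$ for the subcritical exponents $q=2$ and $q=\gamma+1$, yields $u_n(s)\to u(s)$ strongly in $\ltwo$ and $\ii F(x,u_n)\,dx\to\ii F(x,u)\,dx$, $(f(x,u_n),u_n)\to(f(x,u),u)$; note that only the subcritical integrability $L^{\gamma+1}$ enters, which is precisely why the energy equation, rather than a difference estimate in $\ltwo$, must be used. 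Second, the strong convergence $(u_{0,n},v_{0,n})\to(u_0,v_0)$ gives $\mathcal{E}_n(\tau)\to\mathcal{E}(\tau)$.

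The crux is the passage to the limit in the time integral of the right-hand side of \eqref{ener1}. Every term there converges by the preliminary step except the quadratic expressions $-2\alpha\|v_n\|^2$ and $2\varepsilon y(\theta_s\omega_n)(\|\nabla u_n\|^2-\|v_n\|^2)$, for which only the weak convergence of $v_n$ and $\nabla u_n$ is available. The first is disposed of by weak lower semicontinuity, which drives it the correct way in the $\limsup$. For the second, the essential observation is that although $\|\nabla u_n\|^2$ and $\|v_n\|^2$ do not converge separately, their difference does: testing \eqref{pde2} against $u_n$ and using \eqref{pde1} produces the pathwise identity
\be
\|\nabla u_n\|^2-\|v_n\|^2=-\frac{d}{ds}(v_n,u_n)-\alpha(v_n,u_n)-\lambda\|u_n\|^2-(f(x,u_n),u_n)+(g,u_n)-\varepsilon^2 y^2(\theta_s\omega_n)\|u_n\|^2,
\ee
whose right-hand side, apart from the total derivative, converges by the preliminary step, while $(v_n,u_n)\to(v,u)$ in $L^1(\tau,\tau+T)$ and hence $\tfrac{d}{ds}(v_n,u_n)\to\tfrac{d}{ds}(v,u)$ in $\mathcal{D}'(\tau,\tau+T)$. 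Being bounded in $L^\infty$ and convergent in the sense of distributions, $\|\nabla u_n\|^2-\|v_n\|^2$ converges weakly-$\ast$ in $L^\infty(\tau,\tau+T)$ to $\|\nabla u\|^2-\|v\|^2$; pairing it against the weights $2\varepsilon y(\theta_s\omega_n)\mathbf{1}_{[\tau,t_n]}$, which converge strongly in $L^1$ by Lemma \ref{omegam}(i) and $t_n\to t$, passes this term to the limit as well.

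Combining these facts gives $\limsup_n\mathcal{E}_n(t_n)\le\mathcal{E}(t)$, while weak lower semicontinuity supplies the reverse inequality, so $\mathcal{E}_n(t_n)\to\mathcal{E}(t)$. Subtracting the already convergent parts $\lambda\|u_n\|^2$ and $2\ii F(x,u_n)\,dx$ leaves $\|\nabla u_n(t_n)\|^2+\|v_n(t_n)\|^2\to\|\nabla u(t)\|^2+\|v(t)\|^2$; since each summand is separately weakly lower semicontinuous, both must converge, and weak convergence together with convergence of norms in a Hilbert space yields $u_n(t_n)\to u(t)$ in $\hone$ and $v_n(t_n)\to v(t)$ in $\ltwo$. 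I expect the main obstacle to be exactly the indefinite-sign term $2\varepsilon y(\theta_s\omega_n)\|\nabla u_n\|^2$: the multiplicative noise destroys the purely dissipative structure of Ball's deterministic argument, and the roughness of $s\mapsto y(\theta_s\omega)$ forbids integrating by parts in time, so it is the cancellation of the gradient and velocity defects through the identity above that makes the energy method go through.
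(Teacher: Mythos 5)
Your proposal is correct, and part (i) follows essentially the same path as the paper: uniform bounds from Lemma \ref{estf}, equicontinuity of the scalar functions $s\mapsto \langle \xi_n(s),\psi\rangle$, a weak Arzel\`a--Ascoli argument with diagonal extraction, and passage to the limit in the mild formula \eqref{absol} via Lemma \ref{Gpro}(i); the paper obtains the equicontinuity from Ball's semigroup estimate rather than from bounds on $d\xi_n/ds$ in $\ltwo\times H^{-1}(\R^n)$, but this difference is immaterial. In part (ii), however, your treatment of the indefinite-sign noise terms is genuinely different from the paper's. The paper's device (see \eqref{cprv_p32}) is to add $-2\varepsilon|y(\theta_t\omega)|E_1$ to both sides of the energy identity; since $|y|+y\ge 0$ and $|y|-y\ge 0$, every quadratic coefficient on the right-hand side becomes nonpositive, and after solving with the integrating factor $e^{-2\varepsilon\int|y|}$ the limit superior is controlled term by term using Fatou's theorem and weak lower semicontinuity, as in \eqref{cprv_p76}--\eqref{cprv_p82}. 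You instead keep the energy identity as it stands, group the dangerous terms as $2\varepsilon y(\theta_s\omega_n)\left(\|\nabla u_n\|^2-\|v_n\|^2\right)$, and prove that this \emph{difference} of norms converges weak-$*$ in $L^\infty$ in time by testing \eqref{pde2} against $u_n$: the resulting identity writes $\|\nabla u_n\|^2-\|v_n\|^2$ as $-\tfrac{d}{ds}(v_n,u_n)$ plus terms that converge in $L^1$ in time thanks to the strong $\ltwo$-convergence of $u_n(s)$ (tails from Lemma \ref{tai1} plus compact embeddings, exactly as in the paper); distributional convergence plus $L^\infty$-boundedness upgrades this to weak-$*$ convergence, and pairing against the strongly $L^1$-convergent weights $2\varepsilon y(\theta_s\omega_n)\mathbf{1}_{[\tau,t_n]}$ passes the term to the limit. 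Both arguments rest on the same inputs (Lemmas \ref{estf}, \ref{tai1}, \ref{omegam}, the energy equation, and the limsup/liminf sandwich) and both are sound. What the paper's trick buys is self-containedness: only the single identity \eqref{ener1} needs to be justified for Ball-type weak solutions, and only one-sided (Fatou) estimates are required. What your route buys is more information---actual convergence of $\|\nabla u_n\|^2-\|v_n\|^2$ rather than one-sided bounds, and no $|y|$-weighted exponential at all---at the price of justifying one additional formal identity (multiplication of \eqref{pde2} by $u_n$) by the same approximation argument the paper invokes for \eqref{ener1}, which is an acceptable cost at the paper's level of rigor.
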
    
    
         \begin{proof}
        (i).  Since $t_n \to t$ with $t_n \ge \tau$
         and 
               $(u_{0,n}, v_{0,n}) \rightharpoonup 
     (u_0, v_0)$  in $\hone \times \ltwo$,
     there exist $T>0$  and $M>0$ such that
     for all $n\in \N$,
     \be\label{cprv_p1}
     t_n, t \in [\tau, \tau +T]
     \quad \mbox{and} \quad
     \|(u_{0,n}, v_{0,n})\|_{H^1\times L^2}
     \le M.
     \ee
             Since $\omega_n \to \omega$ with
             $\omega_n, \omega \in \Omega_m$,
             by \eqref{cprv_p1}
             and Lemma \ref{estf} there exists 
             a constant $C>0$ such that
             for all $n\in \N$  and $r\in [\tau, \tau +T]$,
               \be\label{cprv_p2}
               \| u(r,\tau, \omega_n, u_{0,n})\|_{H^1}
               +\|v(r,\tau, \omega_n, v_{0,n})\|_{L^2}
               \le C.
               \ee
Let $\xi_{0,n} = \left (
  \begin{array}{c}
  u_{0,n}\\
  v_{0,n}
  \end{array}
\right )$ and 
 $\xi_{n}(r,\tau, \omega_n, \xi_{0,n}) = \left (
  \begin{array}{c}
  u(r,\tau, \omega_n, u_{0,n}) \\
  v(r,\tau, \omega_n, v_{0,n})
  \end{array}
\right )$. By \eqref{absol} we get
for all $r\ge \tau$  and $\psi \in \hone \times \ltwo$,
\be\label{cprv_p5}
<\xi_n (r, \tau, \omega_n, \xi_{0,n} ), \psi>
= <e^{A(r-\tau)}\xi_{0,n}, \psi>
$$
$$
+\int_\tau^r < e^{A(r-s)} G(\cdot, s,  \omega_n,
 \xi_n(s, \tau, \omega_n, \xi_{0,n})) , \psi> ds,
\ee
where $<\cdot, \cdot>$ is the inner product of
$\hone\times\ltwo$.
Let $G_1(s) = \left (
\begin{array}{c}
0\\
g(s)
\end{array}
\right ) $
and  $G_2(x, s, \omega_n, \xi_n)= G (x, s, \omega_n, \xi_n) -G_1(s)$.
Then by \eqref{G1}, \eqref{cprv_p2} and Lemma \ref{omegam} (i), we
find that there exists $C_1>0$ such that
for all $n\in \N$ and $s\in [\tau, \tau +T]$,
\be\label{cprv_p6}
\|G_2(\cdot, s, \omega_n, \xi_n(s, \tau, \omega_n, \xi_{0,n})
\|_{H^1\times L^2}
\le C_1.
\ee
 We now prove 
 the equicontinuity of 
 $<\xi_n (r, \tau, \omega_n, \xi_{0,n} ), \psi>$
 by the approach of \cite{bal1}.
 By \eqref{cprv_p5} we have for $r, r+p\in
 [\tau, \tau+T]$ with $p\ge 0$,
 \be\label{cprv_p7}
<\xi_n (r +p, \tau, \omega_n, \xi_{0,n} )- \xi_n (r, \tau, \omega_n, \xi_{0,n} ),  \psi>
$$
$$
= <(e^{A(r+p-\tau)}-e^{A(r-\tau)})\xi_{0,n}, \psi>
$$
$$
+\int_\tau^r < (e^{A(r+p-s)}- e^{A(r-s)} )   G(\cdot, s,  \omega_n,
 \xi_n(s, \tau, \omega_n, \xi_{0,n}) ) , \psi> ds
 $$
 $$
 +\int_r^{r+p} <e^{A(r+p-s)} G(\cdot, s,  \omega_n,
 \xi_n(s, \tau, \omega_n, \xi_{0,n}) ) , \psi> ds.
\ee
 Given $\eta_1>0$ and $M>0$, it follows from \cite{bal1}
 that there exists $\eta_2>0$ such that
 \be\label{cprv_p8}
|< (e^{A(r+p)}-e^{Ar} ) \phi,  \psi>|
\le \eta_1
\ee
for all $\|\phi\|_{H^1\times L^2} \le M$
and $r, r+p\in [0, T]$
with $0\le p\le \eta_2$.
By \eqref{cprv_p6}-\eqref{cprv_p8} we obtain 
 the equicontinuity of 
 $<\xi_n (r, \tau, \omega_n, \xi_{0,n} ), \psi>$.
 Therefore,  as in \cite{bal1},   we find that
 there exist a subsequence
 $\xi_{n^\prime} (r, \tau, \omega_{n^\prime}, \xi_{0,n^\prime} )$
 and a weakly continuous mapping $\xi: [\tau, \tau +T]
 \to \hone \times \ltwo$ with
 $\xi(\tau) 
 =\left (
\begin{array}{c}
u_0\\v_0\end{array}
\right )$ 
   such that
  \be\label{cprv_p10}
 \xi_{n^\prime} (r, \tau, \omega_{n^\prime}, \xi_{0,n^\prime} )
 \rightharpoonup
 \xi (r),
 \quad \mbox{uniformly for   }\  r\in[\tau, \tau+T].
 \ee
 Note that the subsequence 
 $ \xi_{n^\prime} (r, \tau, \omega_{n^\prime}, \xi_{0,n^\prime} )$
 satisfies \eqref{cprv_p5}.  By taking the limit as $n^\prime \to \infty$,
 using \eqref{cprv_p5},  \eqref{cprv_p6},
 \eqref{cprv_p10}, the weak continuity of $e^{Ar}$
 and Lemma \ref{Gpro} (i), we can get for $r\in [\tau, \tau+T]$,
 \be\label{cprv_p20}
<\xi(r ), \psi>
= <e^{A(r-\tau)}\xi(\tau), \psi>
+\int_\tau^r < e^{A(r-s)} G(\cdot, s,  \omega,
 \xi(s) ) , \psi> ds.
\ee
Thus  $\xi$  is a weak solution of \eqref{pde1}-\eqref{pde2}
with initial condition $\xi_0 =\xi(\tau) $
at initial time $\tau$, that is,
$\xi =\xi(r, \tau, \omega, \xi_0)$
for $r\in [\tau, \tau+T]$.
By a diagonal process, we can choose a further subsequence (not relabeled)
such that 
 \be\label{cprv_p22}
 \xi_{n^\prime} (r, \tau, \omega_{n^\prime}, \xi_{0,n^\prime} )
 \rightharpoonup
 \xi (r,\tau, \omega, \xi_0),
 \quad \mbox{uniformly for}\  r  \ \mbox{in any compact interval of } \ \R.
 \ee
Since $t_n \to t$, by \eqref{cprv_p22} we obtain
\be\label{cprv_p23}
\xi_{n^\prime} (t_{n^\prime}, \tau, \omega_{n^\prime}, \xi_{0,n^\prime} )
 \rightharpoonup
 \xi (t,\tau, \omega, \xi_0),
 \ee
  as desired.
 
 (ii). We now suppose $(u_{0,n}, v_{0,n}) \to (u_0,v_0)$ strongly
 in $\hone\times \ltwo$ and $t_n \to t$. 
 Next, we show the weak convergence of \eqref{cprv_p23}
 is actually a strong convergence under the current conditions.
 For convenience, in the sequel, we will write the subsequence
  $\xi_{n^\prime} (r, \tau, \omega_{n^\prime}, \xi_{0,n^\prime} )$
  as $\xi_{n} (r, \tau, \omega_n, \xi_{0,n} )$.
 
 Choose a positive number $T$ such that
 $t_n , t\in [\tau, \tau+T]$.
Let $E_1: \hone \times \ltwo \to \R$ be a functional given  by,
for $(u,v) \in \hone \times \ltwo$,
\be\label{cprv_p27}
E_1(u,v)=
 \| v \|^2 +  \lambda   \| u \|^2
  + \| \nabla u \|^2 + 2 \ii  F(x, u) dx.
\ee
It follows    from \eqref{ener1} with $\delta =0$ that
$$
   {\frac {dE_1}{dt}}   
  = - 2  \alpha   \| v\|^2
  +  2(g, v)  -2  \varepsilon y(\theta_t \omega) \|v\|^2
 -2\varepsilon^2  y^2 (\theta_t \omega)  
   (u,v)
 $$
 \begin{equation}
 \label{cprv_p30}
 +2 \varepsilon   \lambda  
  y(\theta_t \omega) \| u \|^2
  + 2\varepsilon y(\theta_t \omega) \| \nabla u \|^2
  +2 \varepsilon y(\theta_t \omega) (f(x,u), u).
 \end{equation}
 Since the sign of $ y(\theta_t \omega)$ is indefinite, so
 are the coefficients of $\|v\|^2$,
 $\|u\|^2$ and $ \|\nabla u \|^2$ in \eqref{cprv_p30}.
 This prevents us from applying  Fatou's Theorem to
 estimate the integrals of these terms in time.
 To solve the problem, we add 
 $-2 \varepsilon |y(\theta_t \omega)| E_1$
 to both sides of \eqref{cprv_p30}  to make
 these terms negative.  By doing so, we obtain  
 $$
   {\frac {dE_1}{dt}}   -2 \varepsilon |y(\theta_t \omega)| E_1 
  = - 2  (\alpha  + \varepsilon (|y(\theta_t \omega)| + y(\theta_t \omega)))  \| v\|^2
  $$
  $$
  -2 \varepsilon \lambda (|y(\theta_t \omega) | - y(\theta_t \omega) ) \| u \|^2
  -2\varepsilon (|y(\theta_t \omega)|- y(\theta_t \omega)) \| \nabla u \|^2
  $$
 \begin{equation}
 \label{cprv_p32}  
  +2 \varepsilon y(\theta_t \omega) (f(x,u), u)
  -4\varepsilon | y(\theta_t \omega)|\ii F(x,u) dx 
  -2 \varepsilon^2 y^2(\theta_t \omega) (u,v)
  +2(g,v).
 \end{equation}
 Solving for $E_1$ from \eqref{cprv_p32} on $[\tau, t]$ we obtain
 $$
 E_1(u(t,\tau, \omega, u_0), v(t, \tau, \omega, v_0) )
 =
 e^{-2\varepsilon \int_t^\tau |y(\theta_r\omega)|dr} E_1(u_0,v_0)
 $$
 $$
-2 \int_\tau^t  e^{-2\varepsilon \int_t^s |y(\theta_r\omega)|dr}
 (\alpha  + \varepsilon (|y(\theta_s \omega)| + y(\theta_s \omega))) 
  \| v(s,\tau, \omega, v_0)\|^2 ds
 $$
 $$
 -2 \varepsilon \lambda
  \int_\tau^t  e^{-2\varepsilon \int_t^s |y(\theta_r\omega)|dr}
   (|y(\theta_s \omega) | - y(\theta_s \omega) ) \| u (s,\tau, \omega, u_0)\|^2ds
 $$
  $$
 -2 \varepsilon  
  \int_\tau^t  e^{-2\varepsilon \int_t^s |y(\theta_r\omega)|dr}
   (|y(\theta_s \omega) | - y(\theta_s \omega) ) \| \nabla u (s,\tau, \omega, u_0)\|^2ds
 $$
 $$
 +2 \varepsilon 
 \int_\tau^t  e^{-2\varepsilon \int_t^s |y(\theta_r\omega)|dr}
 y(\theta_s \omega) (f(x, u(s,\tau,\omega, u_0)), u(s,\tau,\omega, u_0) ) ds
 $$
   $$
 -4  \varepsilon 
 \int_\tau^t  e^{-2\varepsilon \int_t^s |y(\theta_r\omega)|dr}
 |y(\theta_s \omega) | \ii F(x, u(s,\tau, \omega, u_0))dx  ds
 $$
 $$
 -2 \varepsilon ^2
 \int_\tau^t  e^{-2\varepsilon \int_t^s |y(\theta_r\omega)|dr}
 y^2(\theta_s \omega) ( u(s,\tau,\omega, u_0), v(s,\tau,\omega, v_0)) ds
 $$
 \be\label{cprv_p40}
 +2   
 \int_\tau^t  e^{-2\varepsilon \int_t^s |y(\theta_r\omega)|dr}
   (g, v(s,\tau,\omega, v_0)) ds.
\ee
By \eqref{cprv_p27} and \eqref{cprv_p40} we obtain
$$
 \| v(t,\tau, \omega, v_{0}) \|^2 
 +  \lambda   \| u (t,\tau, \omega, u_{0}) \|^2
  + \| \nabla u(t,\tau, \omega, u_{0}) \|^2 
  $$
  $$
 =- 2 \ii  F(x, u(t,\tau, \omega, u_{0}) ) dx
  $$
  $$
+ e^{-2\varepsilon \int_{t}^\tau |y(\theta_r\omega)|dr} 
 (\| v_{0} \|^2 +  \lambda   \| u_{0} \|^2
  + \| \nabla u_{0} \|^2 + 2 \ii  F(x, u_{0}) dx )
 $$
 $$
-2 \int_\tau^{t}  e^{-2\varepsilon \int_{t}^s |y(\theta_r\omega)|dr}
 (\alpha  + \varepsilon (|y(\theta_s \omega)| + y(\theta_s \omega))) 
  \| v(s,\tau, \omega, v_{0})\|^2 ds
 $$
 $$
 -2 \varepsilon \lambda
  \int_\tau^{t}  e^{-2\varepsilon \int_{t}^s |y(\theta_r\omega)|dr}
   (|y(\theta_s \omega) | - y(\theta_s \omega) ) 
   \| u (s,\tau, \omega, u_{0})\|^2ds
 $$
  $$
 -2 \varepsilon  
  \int_\tau^{t}  e^{-2\varepsilon \int_{t}^s |y(\theta_r\omega)|dr}
   (|y(\theta_s \omega) | - y(\theta_s \omega) )
    \| \nabla u (s,\tau, \omega, u_{0})\|^2ds
 $$
 $$
 +2 \varepsilon 
 \int_\tau^{t }  e^{-2\varepsilon \int_{t}^s |y(\theta_r\omega)|dr}
 y(\theta_s \omega) (f(x, u(s,\tau,\omega, u_{0})), 
 u(s,\tau,\omega, u_{0}) ) ds
 $$
   $$
 -4  \varepsilon 
 \int_\tau^{t}  e^{-2\varepsilon \int_{t}^s |y(\theta_r\omega)|dr}
 |y(\theta_s \omega)| \ii F(x, u(s,\tau, \omega, u_{0}))dx  ds
 $$
 $$
 -2 \varepsilon ^2
 \int_\tau^{t}  e^{-2\varepsilon \int_{t}^s |y(\theta_r\omega)|dr}
 y^2(\theta_s \omega) ( u(s,\tau,\omega, u_{0}), v(s,\tau,\omega, v_{0})) ds
 $$
 \be\label{cprv_p41}
 +2   
 \int_\tau^{t}  e^{-2\varepsilon \int_{t}^s |y(\theta_r\omega)|dr}
   (g, v(s,\tau,\omega, v_{0})) ds.
\ee 
Replacing $t$ by $t_{n}$, $\omega$ by $\omega_ {n}$,
$u_{0, {n}}$ by $u_0$, $v_{0, {n}}$ by $v_0$
in  \eqref{cprv_p41},    we get
$$
 \| v(t_ {n},\tau, \omega_ {n}, v_{0, {n}}) \|^2 
 +  \lambda   \| u (t_ {n},\tau, \omega_ {n}, u_{0, {n}}) \|^2
  + \| \nabla u(t_ {n},\tau, \omega_ {n}, u_{0, {n}}) \|^2 
  $$
  $$
 =- 2 \ii  F(x, u(t_ {n} ,\tau, \omega_ {n}, u_{0, {n}}) ) dx
  $$
  $$
+ e^{-2\varepsilon \int_{t_ {n}}^\tau |y(\theta_r\omega_ {n})|dr} 
 (\| v_{0, {n}} \|^2 +  \lambda   \| u_{0, {n}} \|^2
  + \| \nabla u_{0, {n}} \|^2 + 2 \ii  F(x, u_{0, {n}}) dx )
 $$
 $$
-2 \int_\tau^{t_n}  e^{-2\varepsilon \int_{t_n}^s |y(\theta_r\omega_n)|dr}
 (\alpha  + \varepsilon (|y(\theta_s \omega_n)| + y(\theta_s \omega_n))) 
  \| v(s,\tau, \omega_n, v_{0,n})\|^2 ds
 $$
 $$
 -2 \varepsilon \lambda
  \int_\tau^{t_n}  e^{-2\varepsilon \int_{t_n}^s |y(\theta_r\omega_n)|dr}
   (|y(\theta_s \omega_n) | - y(\theta_s \omega_n) ) 
   \| u (s,\tau, \omega_n, u_{0,n})\|^2ds
 $$
  $$
 -2 \varepsilon  
  \int_\tau^{t_n}  e^{-2\varepsilon \int_{t_n}^s |y(\theta_r\omega_n)|dr}
   (|y(\theta_s \omega_n) | - y(\theta_s \omega_n) )
    \| \nabla u (s,\tau, \omega_n, u_{0,n})\|^2ds
 $$
 $$
 +2 \varepsilon 
 \int_\tau^{t _n}  e^{-2\varepsilon \int_{t_n}^s |y(\theta_r\omega_n)|dr}
 y(\theta_s \omega_n) (f(x, u(s,\tau,\omega_n, u_{0,n})), 
 u(s,\tau,\omega_n, u_{0,n}) ) ds
 $$
   $$
 -4  \varepsilon 
 \int_\tau^{t_n}  e^{-2\varepsilon \int_{t_n}^s |y(\theta_r\omega_n)|dr}
 |y(\theta_s \omega_n)| \ii F(x, u(s,\tau, \omega_n, u_{0,n}))dx  ds
 $$
 $$
 -2 \varepsilon ^2
 \int_\tau^{t_n}  e^{-2\varepsilon \int_{t_n}^s |y(\theta_r\omega_n)|dr}
 y^2(\theta_s \omega_n) ( u(s,\tau,\omega_n, u_{0,n}), v(s,\tau,\omega_n, v_{0,n})) ds
 $$
 \be\label{cprv_p45}
 +2   
 \int_\tau^{t_n}  e^{-2\varepsilon \int_{t_n}^s |y(\theta_r\omega_n)|dr}
   (g, v(s,\tau,\omega_n, v_{0,n})) ds.
\ee
 We need to examine the limit of each term in \eqref{cprv_p45}.
 By \eqref{cprv_p22} we have for each $r \in [\tau, \tau +T]$,
 \be\label{cprv_p47}
 u(r,\tau, \omega_n, u_{0,n})
 \rightharpoonup 
 u(r,\tau, \omega, u_{0})
 \quad \mbox{in} \  \hone,
 \ee
 and
 \be\label{cprv_p49}
 v(r,\tau, \omega_n, v_{0,n})
 \rightharpoonup 
 v(r,\tau, \omega, v_{0})
 \quad \mbox{in} \  \ltwo.
 \ee
 We claim that  for every $r\in [ \tau,\tau+T]$,
 \be\label{cprv_p50}
  u(r,\tau, \omega_n, u_{0,n})
 \to 
 u(r,\tau, \omega, u_{0})
 \quad \mbox{ strongly in} \  \ltwo.
 \ee
 Since $(u_{0,n}, v_{0,n})
 \to (u_0,v_0)$ in $\hone \times \ltwo$
 and $\omega_n \to \omega$
 with $\omega_n, \omega\in \Omega_m$, by Lemma \ref{tai1} we
 find that for every $\eta>0$, there exist $N_1 \ge 1$
 and $K\ge 1$ such that for all $n \ge N_1$  and $r\in [\tau, \tau+T]$,
 \be\label{cprv_p52}
 \int_{|x| \ge K}
 |u(r, \tau, \omega_n, u_{0,n})|^2 dx \le  {\frac \eta{8}}
 \quad \mbox{and} \quad
  \int_{|x| \ge K}
 |u(r, \tau, \omega, u_{0})|^2 dx \le  {\frac \eta{8}}.
 \ee
 On the other hand, by the compactness of embedding
 $H^1(B_K) \hookrightarrow L^2(B_K)$ with
 $B_K = \{ x\in \R^n: |x| <  K\}$, 
 we get from \eqref{cprv_p47} that
 \be\label{cprv_p54}
 u(r,\tau, \omega_n, u_{0,n})
\to 
 u(r,\tau, \omega, u_{0})
 \quad \mbox{strongly in} \  L^2(B_K).
 \ee
 By \eqref{cprv_p54}, there exists $N_2\ge N_1$ such that
 for all $n \ge N_2$,
 \be\label{cprv_p56}
 \int_{|x|<K} |u(r,\tau, \omega_n, u_{0,n})-
 u(r,\tau, \omega, u_{0})|^2 dx \le  {\frac \eta{2}}.
 \ee
 It  follows from  \eqref{cprv_p52} and \eqref{cprv_p56} that
 for all  $n \ge N_2$,
 $$
 \ii |u(r,\tau, \omega_n, u_{0,n})-
 u(r,\tau, \omega, u_{0})|^2 dx \le   \eta,
$$
which implies \eqref{cprv_p50}.
Similarly, we also have
\be\label{cprv_p57}
  u(t_n,\tau, \omega_n, u_{0,n})
 \to 
 u(t,\tau, \omega, u_{0})
 \quad \mbox{ strongly in} \  \ltwo,
 \ee
 which follows from \eqref{cprv_p23}, \eqref{cprv_p52}
 and the arguments of \eqref{cprv_p50}.
By \eqref{f1} and \eqref{cprv_p2} we get
 $$
 \ii |F(x, u(t_n, \tau,  \omega_n, u_{0,n})) - 
 F(x, u(t, \tau,  \omega, u_{0}) )| dx
 $$
 $$
 \le 
 C (\|   u(t_n, \tau,  \omega_n, u_{0,n} )\|_{H^1}^\gamma
 + \|   u(t, \tau,  \omega, u_{0} )\|_{H^1}^\gamma
 + \|\phi_1\|^2)\|u(t_n,\tau, \omega_n, u_{0,n})-
 u(t,\tau, \omega, u_{0})\|
 $$
 $$
 \le C_1  \|u(t_n,\tau, \omega_n, u_{0,n})-
 u(t,\tau, \omega, u_{0})\|,
 $$
 which together with \eqref{cprv_p57} yields
 \be\label{cprv_p58}
 \ii  F(x, u(t_n, \tau,  \omega_n, u_{0,n}) )dx
 \to 
 \ii F(x, u(t, \tau,  \omega, u_{0}) )  dx.
 \ee
 Analogously,  by $u_{0,n} \to  u_0$  we can get
  \be\label{cprv_p60}
 \ii  F(x,   u_{0,n} ) dx
 \to 
 \ii F(x,   u_{0})    dx.
 \ee
By  \eqref{cprv_p2},  \eqref{cprv_p50},
Lemma \ref{omegam} (i), the
   arguments of \eqref{cprv_p58}
 and the Lebesgue dominated convergence theorem, we obtain
\be\label{cprv_p62}
 \lim_{n\to \infty}
 \int_\tau^{t_n}  e^{-2\varepsilon \int_{t_n}^s |y(\theta_r\omega_n)|dr}
 y(\theta_s \omega_n) \ii F(x, u(s,\tau, \omega_n, u_{0,n}))dx  ds
 $$
 $$
=
 \int_\tau^{t}  e^{-2\varepsilon \int_{t}^s |y(\theta_r\omega)|dr}
 y(\theta_s \omega) \ii F(x, u(s,\tau, \omega, u_{0}))dx  ds.
 \ee
 By \eqref{f1} we have for all $s\in [\tau, \tau +T]$,
 \be\label{cprv_p64}
 |f(x, u(s,\tau,\omega_n, u_{0,n})) 
 u(s,\tau,\omega_n, u_{0,n})|
 \le c_1  | u(s,\tau,\omega_n, u_{0,n})|^{\gamma +1}
 +|\phi_1| |u(s,\tau,\omega_n, u_{0,n})|.
 \ee
By \eqref{cprv_p2} and \eqref{cprv_p50} we see that
the right-hand side of \eqref{cprv_p64} is convergent
in $L^1(\R^n)$. Thus by a  dominated convergence theorem
in \cite{bal1} we obtain 
\be\label{cprv_p66}
 \lim_{n \to \infty}
 \ii f(x, u(s,\tau,\omega_n, u_{0,n})) 
 u(s,\tau,\omega_n, u_{0,n}) dx
 =
  \ii f(x, u(s,\tau,\omega, u_{0})) 
 u(s,\tau,\omega, u_{0}) dx.
 \ee
 By \eqref{cprv_p2}, Lemma \ref{omegam} (i) and
 the Lebesgue  dominated convergence  theorem,
 we obtain from \eqref{cprv_p66} that
 \be\label{cprv_p68}
\lim_{n \to \infty}
 \int_\tau^{t _n}  e^{-2\varepsilon \int_{t_n}^s |y(\theta_r\omega_n)|dr}
 y(\theta_s \omega_n) (f(x, u(s,\tau,\omega_n, u_{0,n})), 
 u(s,\tau,\omega_n, u_{0,n}) ) ds
 $$
 $$
 =
 \int_\tau^{t }  e^{-2\varepsilon \int_{t}^s |y(\theta_r\omega)|dr}
 y(\theta_s \omega) (f(x, u(s,\tau,\omega, u_{0})), 
 u(s,\tau,\omega, u_{0}) ) ds.
\ee
We now deal with the third term on the right-hand side of
\eqref{cprv_p45}. Note that
 $$
 \int_\tau^{t_n}  e^{-2\varepsilon \int_{t_n}^s |y(\theta_r\omega_n)|dr}
 (\alpha  + \varepsilon (|y(\theta_s \omega_n)| + y(\theta_s \omega_n))) 
  \| v(s,\tau, \omega_n, v_{0,n})\|^2 ds
  $$
  $$
  = 
  e^{-2\varepsilon \int_{t_n}^t |y(\theta_r\omega_n)|dr}
   \int_\tau^t e^{-2\varepsilon \int_{t}^s |y(\theta_r\omega_n)|dr}
   (\alpha  + \varepsilon (|y(\theta_s \omega_n)| + y(\theta_s \omega_n))) 
  \| v(s,\tau, \omega_n, v_{0,n})\|^2 ds
  $$
\be\label{cprv_p70}
  +
  e^{-2\varepsilon \int_{t_n}^t |y(\theta_r\omega_n)|dr}
   \int_t^{t_n} e^{-2\varepsilon \int_{t}^s |y(\theta_r\omega_n)|dr}
   (\alpha  + \varepsilon (|y(\theta_s \omega_n)| + y(\theta_s \omega_n))) 
  \| v(s,\tau, \omega_n, v_{0,n})\|^2 ds.
 \ee
Since $\omega_n \to \omega$
and $t_n \to t$,   by \eqref{estf_p6}  and \eqref{cprv_p2} we get
\be\label{cprv_p72}
\lim_{n\to \infty}
e^{-2\varepsilon \int_{t_n}^t |y(\theta_r\omega_n)|dr} =1,
\ee
and
\be\label{cprv_p74}
 \lim_{n\to \infty} 
   \int_t^{t_n} e^{-2\varepsilon \int_{t}^s |y(\theta_r\omega_n)|dr}
   (\alpha  + \varepsilon (|y(\theta_s \omega_n)| + y(\theta_s \omega_n))) 
  \| v(s,\tau, \omega_n, v_{0,n})\|^2 ds
  =0.
  \ee
On the other hand, by Fatou\rq{}s Theorem and Lemma \ref{omegam} (i),
 we have
$$
\liminf_{n\to \infty}
 \int_\tau^{t}  e^{-2\varepsilon \int_{t}^s |y(\theta_r\omega_n)|dr}
 (\alpha  + \varepsilon (|y(\theta_s \omega_n)| + y(\theta_s \omega_n))) 
  \| v(s,\tau, \omega_n, v_{0,n})\|^2 ds
  $$
  $$
  \ge 
  \int_\tau^{t}\liminf_{n\to \infty} ( e^{-2\varepsilon \int_{t}^s |y(\theta_r\omega_n)|dr}
 (\alpha  + \varepsilon (|y(\theta_s \omega_n)| + y(\theta_s \omega_n))) 
  \| v(s,\tau, \omega_n, v_{0,n})\|^2) ds
  $$
\be\label{cprv_p76}
  \ge 
  \int_\tau^{t}  e^{-2\varepsilon \int_{t}^s |y(\theta_r\omega)|dr}
 (\alpha  + \varepsilon (|y(\theta_s \omega)| + y(\theta_s \omega))) 
  \liminf_{n\to \infty}
  \| v(s,\tau, \omega_n, v_{0,n})\|^2 ds.
 \ee
 Since  
  $v(s,\tau, \omega_n, v_{0,n}) \rightharpoonup
  v(s,\tau, \omega, v_0)$   by \eqref{cprv_p49}, we get
  from \eqref{cprv_p76} that
   $$
\liminf_{n\to \infty}
 \int_\tau^{t}  e^{-2\varepsilon \int_{t}^s |y(\theta_r\omega_n)|dr}
 (\alpha  + \varepsilon (|y(\theta_s \omega_n)| + y(\theta_s \omega_n))) 
  \| v(s,\tau, \omega_n, v_{0,n})\|^2 ds
  $$
  \be\label{cprv_p78}
  \ge 
  \int_\tau^{t}  e^{-2\varepsilon \int_{t}^s |y(\theta_r\omega)|dr}
 (\alpha  + \varepsilon (|y(\theta_s \omega)| + y(\theta_s \omega))) 
  \| v(s,\tau, \omega, v_{0})\|^2 ds.
 \ee
 It follows   from \eqref{cprv_p70}-\eqref{cprv_p74}
 and \eqref{cprv_p78} that
 $$
\liminf_{n\to \infty}
 \int_\tau^{t_n}  e^{-2\varepsilon \int_{t_n}^s |y(\theta_r\omega_n)|dr}
 (\alpha  + \varepsilon (|y(\theta_s \omega_n)| + y(\theta_s \omega_n))) 
  \| v(s,\tau, \omega_n, v_{0,n})\|^2 ds
  $$
 \be\label{cprv_p80}
  \ge 
  \int_\tau^{t}  e^{-2\varepsilon \int_{t}^s |y(\theta_r\omega)|dr}
 (\alpha  + \varepsilon (|y(\theta_s \omega)| + y(\theta_s \omega))) 
  \| v(s,\tau, \omega, v_{0})\|^2 ds.
 \ee
 By similar arguments, we can also obtain
 $$
\liminf_{n\to \infty}
 \int_\tau^{t_n}  e^{-2\varepsilon \int_{t_n}^s |y(\theta_r\omega_n)|dr}
     (|y(\theta_s \omega_n)| + y(\theta_s \omega_n) ) 
  \| \nabla u (s,\tau, \omega_n, u_{0,n})\|^2 ds
  $$
 \be\label{cprv_p82}
  \ge 
  \int_\tau^{t}  e^{-2\varepsilon \int_{t}^s |y(\theta_r\omega)|dr}
    (|y(\theta_s \omega)| + y(\theta_s \omega) ) 
  \| \nabla u (s,\tau, \omega, u_{0})\|^2 ds.
 \ee
 Now taking the limit superior of \eqref{cprv_p45} as $n\to \infty$,
 since $(u_{0,n}, v_{0,n}) \to (u_0, v_0)$ in $\hone\times \ltwo$,
 by Lemma \ref{omegam} (i), \eqref{cprv_p2},
  \eqref{cprv_p49}-\eqref{cprv_p50},
 \eqref{cprv_p58}-\eqref{cprv_p62},
 \eqref{cprv_p68} and
 \eqref{cprv_p80}-\eqref{cprv_p82}, we get
 $$
    \limsup_{n\to \infty}
    \left (
 \| v(t_ {n},\tau, \omega_ {n}, v_{0, {n}}) \|^2 
 +  \lambda   \| u (t_ {n},\tau, \omega_ {n}, u_{0, {n}}) \|^2
  + \| \nabla u(t_ {n},\tau, \omega_ {n}, u_{0, {n}}) \|^2 
  \right )
  $$
  $$
 \le - 2 \ii  F(x, u(t ,\tau, \omega, u_{0}) ) dx
  $$
  $$
+ e^{-2\varepsilon \int_{t}^\tau |y(\theta_r\omega)|dr} 
 (\| v_{0} \|^2 +  \lambda   \| u_{0} \|^2
  + \| \nabla u_{0} \|^2 + 2 \ii  F(x, u_{0}) dx )
 $$
 $$
-2 \int_\tau^{t}  e^{-2\varepsilon \int_{t}^s |y(\theta_r\omega)|dr}
 (\alpha  + \varepsilon (|y(\theta_s \omega)| + y(\theta_s \omega))) 
  \| v(s,\tau, \omega, v_{0})\|^2 ds
 $$
 $$
 -2 \varepsilon \lambda
  \int_\tau^{t}  e^{-2\varepsilon \int_{t}^s |y(\theta_r\omega)|dr}
   (|y(\theta_s \omega) | - y(\theta_s \omega) ) 
   \| u (s,\tau, \omega, u_{0})\|^2ds
 $$
  $$
 -2 \varepsilon  
  \int_\tau^{t}  e^{-2\varepsilon \int_{t}^s |y(\theta_r\omega)|dr}
   (|y(\theta_s \omega) | - y(\theta_s \omega) )
    \| \nabla u (s,\tau, \omega, u_{0})\|^2ds
 $$
 $$
 +2 \varepsilon 
 \int_\tau^{t }  e^{-2\varepsilon \int_{t}^s |y(\theta_r\omega)|dr}
 y(\theta_s \omega) (f(x, u(s,\tau,\omega, u_{0})), 
 u(s,\tau,\omega, u_{0}) ) ds
 $$
   $$
 -4  \varepsilon 
 \int_\tau^{t}  e^{-2\varepsilon \int_{t}^s |y(\theta_r\omega)|dr}
 y(\theta_s \omega) \ii F(x, u(s,\tau, \omega, u_{0}))dx  ds
 $$
 $$
 -2 \varepsilon ^2
 \int_\tau^{t}  e^{-2\varepsilon \int_{t}^s |y(\theta_r\omega)|dr}
 y^2(\theta_s \omega) ( u(s,\tau,\omega, u_{0}), v(s,\tau,\omega, v_{0})) ds
 $$
 \be\label{cprv_p90}
 +2   
 \int_\tau^{t}  e^{-2\varepsilon \int_{t}^s |y(\theta_r\omega)|dr}
   (g, v(s,\tau,\omega, v_{0})) ds.
\ee         
Note that the right-hand side of \eqref{cprv_p90}
is exactly the same as that of \eqref{cprv_p41}. Thus we obtain
   $$
    \limsup_{n\to \infty}
    \left (
 \| v(t_ {n},\tau, \omega_ {n}, v_{0, {n}}) \|^2 
 +  \lambda   \| u (t_ {n},\tau, \omega_ {n}, u_{0, {n}}) \|^2
  + \| \nabla u(t_ {n},\tau, \omega_ {n}, u_{0, {n}}) \|^2 
  \right )
  $$
\be\label{cprv_p92}
  \le 
 \| v(t,\tau, \omega, v_{0}) \|^2 
 +  \lambda   \| u (t,\tau, \omega, u_{0}) \|^2
  + \| \nabla u(t,\tau, \omega, u_{0}) \|^2 .
 \ee
 On the other hand, 
 since the right-hand side of \eqref{cprv_p92} 
 is equivalent to  the  norm 
 of $(u,v)$ in $\hone \times \ltwo$,
   by \eqref{cprv_p47}-\eqref{cprv_p49} we have
  $$
    \liminf_{n\to \infty}
    \left (
 \| v(t_ {n},\tau, \omega_ {n}, v_{0, {n}}) \|^2 
 +  \lambda   \| u (t_ {n},\tau, \omega_ {n}, u_{0, {n}}) \|^2
  + \| \nabla u(t_ {n},\tau, \omega_ {n}, u_{0, {n}}) \|^2 
  \right )
  $$
\be\label{cprv_p94}
\ge
 \| v(t,\tau, \omega, v_{0}) \|^2 
 +  \lambda   \| u (t,\tau, \omega, u_{0}) \|^2
  + \| \nabla u(t,\tau, \omega, u_{0}) \|^2 .
 \ee
 It follows  from \eqref{cprv_p92}-\eqref{cprv_p94} that
  $$
    \lim _{n\to \infty}
    \left (
 \| v(t_ {n},\tau, \omega_ {n}, v_{0, {n}}) \|^2 
 +  \lambda   \| u (t_ {n},\tau, \omega_ {n}, u_{0, {n}}) \|^2
  + \| \nabla u(t_ {n},\tau, \omega_ {n}, u_{0, {n}}) \|^2 
  \right )
  $$
$$
=
 \| v(t,\tau, \omega, v_{0}) \|^2 
 +  \lambda   \| u (t,\tau, \omega, u_{0}) \|^2
  + \| \nabla u(t,\tau, \omega, u_{0}) \|^2 
$$
which along with 
  \eqref{cprv_p47}-\eqref{cprv_p49} implies
 $$
  (   u (t_ {n},\tau, \omega_ {n}, u_{0, {n}}),
    v(t_ {n},\tau, \omega_ {n}, v_{0, {n}}))
    \to 
      (   u (t,\tau, \omega, u_{0}),
    v(t,\tau, \omega, v_{0}))
    \quad \mbox{in } \ \hone \times \ltwo
    $$
     as desired.    \end{proof}

      As an immediate consequence of Lemma \ref{cprv},
      we get the following weak continuity of solutions of
      \eqref{pde1}-\eqref{pde3} which  is  useful for
      proving  the asymptotic compactness of solutions later.
      
        \begin{lem}
    \label{weakc}
    Suppose \eqref{f1}-\eqref{f3} hold,  $\tau \in \R$
      and   $ \omega \in \Omega$.
    Let $ (u(\cdot,\tau, \omega, u_{0,n}),
    v(\cdot,\tau, \omega, v_{0,n}) )$
     be a solution of \eqref{pde1}-\eqref{pde3}
    with initial data   
    $(u_{0,n}, v_{0,n})$ at initial time $\tau$.
      If  $(u_{0,n}, v_{0,n}) \rightharpoonup 
     (u_0, v_0)$  in $\hone \times \ltwo$, 
     then system \eqref{pde1}-\eqref{pde3} has a solution
     $(u,v) = (u(\cdot, \tau, \omega, u_0),  v(\cdot, \tau, \omega, v_0))$
     with initial condition $(u_0,v_0)$ at initial time $\tau$ such that,
     up to  a subsequence, for all $t \ge \tau$,
     $$ 
     u(t, \tau, \omega, u_{0,n})
     \rightharpoonup   u(t, \tau, \omega, u_{0})
       \ \mbox{in} \ \hone  
     $$
     and
     $$
     v(t, \tau, \omega, v_{0,n})
     \rightharpoonup   v(t, \tau, \omega, v_{0})
       \ \mbox{in} \   \ltwo.
     $$
 \end{lem}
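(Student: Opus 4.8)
The plan is to obtain this as a specialization of Lemma \ref{cprv}(i) to the constant sequence $\omega_n \equiv \omega$. First I would dispose of the fact that here $\omega \in \Omega$ is arbitrary, whereas Lemma \ref{cprv} is phrased for $\omega_n, \omega \in \Omega_m$: by Lemma \ref{omegam}(ii) we have $\Omega = \bigcup_{m=1}^\infty \Omega_m$, so $\omega$ belongs to $\Omega_m$ for some $m \in \N$. Setting $\omega_n = \omega$ for all $n$, the (constant) sequence $\{\omega_n\}$ converges to $\omega$ in $\Omega$ and satisfies $\omega_n, \omega \in \Omega_m$, so the hypotheses of Lemma \ref{cprv}(i) hold for any choice of times $t_n \to t$.

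The substantive point is that the present lemma asks for a \emph{single} subsequence along which the weak convergences hold simultaneously for every $t \ge \tau$, whereas the bare statement of Lemma \ref{cprv}(i) only records weak convergence at one limiting time $t$ (with a possibly $t$-dependent subsequence). To get the uniform version I would appeal to the construction carried out \emph{inside} the proof of Lemma \ref{cprv}(i): by the diagonal extraction leading to \eqref{cprv_p22}, one produces a single subsequence $\{n'\}$ along which
$$\xi_{n'}(r, \tau, \omega, \xi_{0,n'}) \rightharpoonup \xi(r, \tau, \omega, \xi_0) \quad \mbox{uniformly for } r \mbox{ in any compact interval of } \R,$$
where $\xi = (u,v)$ solves \eqref{pde1}-\eqref{pde3} with datum $(u_0,v_0)$ at time $\tau$. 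Reading off the two components along this one subsequence gives $u(r,\tau,\omega,u_{0,n'}) \rightharpoonup u(r,\tau,\omega,u_0)$ in $\hone$ and $v(r,\tau,\omega,v_{0,n'}) \rightharpoonup v(r,\tau,\omega,v_0)$ in $\ltwo$, and in particular these hold at every fixed $t \ge \tau$, which is exactly the claim.

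Since no new estimate is required, the only thing I expect to need care with is the bookkeeping just described: recognizing that fixing $\omega_n \equiv \omega$ is a genuine special case of Lemma \ref{cprv}(i), and that it is the uniform-on-compacts weak convergence \eqref{cprv_p22}, rather than merely the endpoint convergence \eqref{cprv_p23}, that upgrades the conclusion to hold for all $t \ge \tau$ along a common subsequence. There is no real obstacle here beyond quoting the right line of the previous proof.
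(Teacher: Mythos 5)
Your proposal is correct and follows essentially the same route as the paper, whose proof likewise uses $\Omega=\bigcup_{m=1}^\infty \Omega_m$ (Lemma \ref{omegam}) to place $\omega$ in some $\Omega_m$ and then invokes Lemma \ref{cprv}(i) with $t_n=t$ and $\omega_n=\omega$ for all $n$. Your additional observation — that the literal statement of Lemma \ref{cprv}(i) only gives a possibly $t$-dependent subsequence, and that the common subsequence valid for all $t\ge\tau$ really comes from the uniform-on-compacts weak convergence \eqref{cprv_p22} established inside that proof — is exactly the right way to justify the step that the paper's terse citation glosses over.
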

 
 \begin{proof}
 Given $\omega \in \Omega$, since 
 $\Omega = \bigcup\limits_{m=1}^\infty \Omega_m$ by Lemma \ref{omegam},
 we find that there exists $m \in \N$ such that
 $\omega \in \Omega_m$.
 Then the result follows from Lemma \ref{cprv} (i) for
 $t_n =t$ and $\omega_n =\omega$ for all $n\in \N$.
  \end{proof}

        In the sequel,  we
        write $(U(t+\tau,\tau, \omega, u_0), Z(t+\tau,
        \tau, \omega, z_0))$  for the collection of all solutions of
        \eqref{spde1}-\eqref{spde3}   at time $t+\tau$  with initial condition
        $(u_0, z_0)$  and initial time $\tau$; that is,
        $$
        (U(t+\tau,\tau, \omega, u_0), Z(t+\tau,
        \tau, \omega, z_0))
        $$
        $$
        = \left \{ ( u(t+\tau , \tau, \omega, u_0),  z(t+\tau, \tau, \omega, z_0) ): 
         (u,z) \mbox{ is a solution of  $\eqref{spde1}$-$\eqref{spde3}$}
 \right \}.
        $$
       For the 
         measurability of $(U,Z)$, we have the following result.
         
         \begin{lem}
         \label{meuz}
         Suppose \eqref{f1}-\eqref{f3} hold and $\tau \in \R$.
         Then the multivalued mapping
         $$
         (U(\cdot, \tau, \cdot, \cdot), Z(\cdot, \tau, \cdot, \cdot)):
          \R^+\times \Omega \times \hone\times \ltwo
         \to  2^{H^1(\mathbb{R}^n) \times L^2(\mathbb{R}^n)}
         $$
         is measurable with respect to $\calb(\R^+) \times
         \calb(\Omega) \times \calb(\hone \times \ltwo)$. 
   \end{lem}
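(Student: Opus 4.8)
The plan is to deduce the measurability of $(U,Z)$ from the abstract criterion of Theorem \ref{wusc}, following the same scheme as in the proof of Lemma \ref{mome2}. Since the codomain $\hone \times \ltwo$ is a separable Banach space, it suffices to show that $(U(\cdot,\tau,\cdot,\cdot),Z(\cdot,\tau,\cdot,\cdot))$ is weakly upper semicontinuous in the sense of Definition \ref{wuc} once its $\omega$-argument is restricted to each $\Omega_m$. Because $\R^+ \times \Omega_m \times \hone \times \ltwo$ is a metric space, Theorem \ref{wusc} then yields measurability of the restricted map with respect to $\calb(\R^+) \times \calb(\Omega_m) \times \calb(\hone \times \ltwo)$, the product Borel $\sigma$-algebra here coinciding with the Borel $\sigma$-algebra of the (separable metric) product space. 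Finally, the decomposition $\Omega = \bigcup_{m=1}^\infty \Omega_m$ of Lemma \ref{omegam}(ii), together with $\Omega_m \in \calb(\Omega)$, upgrades this to measurability on all of $\R^+ \times \Omega \times \hone \times \ltwo$: for any open $O$, the inverse image is the countable union over $m$ of its intersections with $\R^+ \times \Omega_m \times \hone \times \ltwo$, each of which lies in $\calb(\R^+) \times \calb(\Omega) \times \calb(\hone \times \ltwo)$.

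To verify weak upper semicontinuity on $\R^+ \times \Omega_m \times \hone \times \ltwo$, I would take a convergent sequence $(t_n,\omega_n,u_{0,n},z_{0,n}) \to (t_0,\omega,u_0,z_0)$ with $\omega_n,\omega \in \Omega_m$ and an arbitrary element of the image, namely a solution $(u,z)$ of \eqref{spde1}-\eqref{spde3} evaluated at time $t_n+\tau$. The idea is to pass through the change of variables $v = z - \varepsilon y(\theta_t\omega)u$ of \eqref{tranz}, which identifies such solutions with solutions $(u,v)$ of the pathwise system \eqref{pde1}-\eqref{pde3}. Using Lemma \ref{omegam}(i) (the joint continuity of $y(\theta_\tau\omega_n)$ within $\Omega_m$) together with $u_{0,n}\to u_0$ in $\hone$ and $z_{0,n}\to z_0$ in $\ltwo$, the transformed data satisfy $v_{0,n} = z_{0,n} - \varepsilon y(\theta_\tau\omega_n)u_{0,n} \to v_0$ in $\ltwo$. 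Hence $(u_{0,n},v_{0,n}) \to (u_0,v_0)$ strongly, and in particular weakly, so Lemma \ref{cprv}(i) applies and produces a solution $(u,v)$ with data $(u_0,v_0)$ and a subsequence along which $u(t_n+\tau,\tau,\omega_n,u_{0,n}) \rightharpoonup u(t_0+\tau,\tau,\omega,u_0)$ in $\hone$ and $v(t_n+\tau,\tau,\omega_n,v_{0,n}) \rightharpoonup v(t_0+\tau,\tau,\omega,v_0)$ in $\ltwo$.

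It then remains to transform back. Writing $z = v + \varepsilon y(\theta_{\cdot}\omega)u$ and invoking Lemma \ref{omegam}(i) once more, now with both $t_n+\tau \to t_0+\tau$ and $\omega_n\to\omega$ inside $\Omega_m$, gives $y(\theta_{t_n+\tau}\omega_n) \to y(\theta_{t_0+\tau}\omega)$. Since weak convergence of $u$ in $\hone$ entails weak convergence in $\ltwo$, and a convergent scalar times a weakly convergent $\ltwo$-function converges weakly, the second term converges weakly, so that $z(t_n+\tau,\tau,\omega_n,z_{0,n}) \rightharpoonup z(t_0+\tau,\tau,\omega,z_0)$ in $\ltwo$, with the limit pair belonging to the image set $(U(t_0+\tau,\tau,\omega,u_0),Z(t_0+\tau,\tau,\omega,z_0))$ by \eqref{tranz}. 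This is precisely weak upper semicontinuity. The main obstacle is the bookkeeping in this change of variables: the random coefficient $y(\theta_t\omega)$ couples the time and $\omega$ arguments, so the simultaneous limits $t_n\to t_0$ and $\omega_n\to\omega$ must be controlled uniformly, which is exactly what confining $\omega_n,\omega$ to a single closed piece $\Omega_m$ secures via Lemma \ref{omegam}(i); once this is in hand, the passage from weak convergence of $v$ to weak convergence of $z$ is routine.
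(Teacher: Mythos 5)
Your proposal is correct and follows essentially the same route as the paper's own proof: restrict the $\omega$-variable to each $\Omega_m$, use the change of variables \eqref{tranz} together with Lemma \ref{omegam}(i) and Lemma \ref{cprv}(i) to establish weak upper semicontinuity there, invoke Theorem \ref{wusc} (the paper does so via Lemma \ref{mcocy}) for measurability on each piece, and then patch the pieces together using $\Omega = \bigcup_{m=1}^\infty \Omega_m$ with $\Omega_m \in \calb(\Omega)$. Your explicit remark that the product Borel $\sigma$-algebra coincides with the Borel $\sigma$-algebra of the separable metric product space is a detail the paper leaves implicit, but it does not change the argument.
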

   
   \begin{proof}
   We  first show 
   $(U(\cdot, \tau, \cdot, \cdot), Z(\cdot, \tau, \cdot, \cdot))$:
         $\R^+\times \Omega_m \times \hone\times \ltwo
         \to  2^{H^1(\mathbb{R}^n) \times L^2(\mathbb{R}^n)}$
         is  weakly upper semicontinuous  for every fixed $m\in \N$.
         Suppose $t_n \to t$ with $t_n \ge 0$,
         $\omega_n \to \omega$ with $\omega_n, \omega
         \in \Omega_m$, and $(u_{0,n}, z_{0,n}) \to
         (u_0, z_0)$ in $\hone \times \ltwo$.
         Let $(u(t,\tau, \omega_n, u_{0,n}), z(t,\tau, \omega_n, z_{0,n}))$
         be a solution of \eqref{spde1}-\eqref{spde3} with
         initial condition $(u_{0,n}, z_{0,n})$ at $\tau$.
         By \eqref{tranz},
         $(u(t,\tau, \omega_n, u_{0,n}), v(t,\tau, \omega_n, v_{0,n}))$
         is a   solution of \eqref{pde1}-\eqref{pde3} with
         initial condition $(u_{0,n}, v_{0,n})$ at $\tau$,  where
         \be\label{meuz_p1}
         v(t,\tau, \omega_n, v_{0,n})
         =  z(t,\tau, \omega_n, z_{0,n})
         -\varepsilon y(\theta_t \omega_n)
         u(t,\tau, \omega_n, u_{0,n})
         \  \mbox{and} \ 
         v_{0,n} = z_{0,n}  -\varepsilon y(\theta_\tau \omega_n)
           u_{0,n} .
         \ee
         Let $v_0 = z_{0}  -\varepsilon y(\theta_\tau \omega) u_{0}$.
         Since $(u_{0,n}, z_{0,n}) \to (u_0, z_0)$ in
         $\hone \times \ltwo$, we get
         $(u_{0,n}, v_{0,n}) \to (u_0, v_0)$ in
         $\hone \times \ltwo$.
         By  Lemma \ref{cprv} (i),  we find that  system \eqref{pde1}-\eqref{pde3} has
         a solution 
         $(u(\cdot,\tau, \omega, u_{0}), v(\cdot,\tau, \omega, v_{0}) )$
         with initial data $(u_0,v_0)$ at $\tau$ such that, up to a subsequence,
         \be\label{meuz_p2}
         (u(t_n,\tau, \omega_n, u_{0,n}), v(t_n,\tau, \omega_n, v_{0,n} ))
         \rightharpoonup
         (u(t,\tau, \omega, u_{0}), v(t,\tau, \omega, v_{0}) )
         \ee
         in $\hone \times \ltwo$.
         By \eqref{cprv_p2},  \eqref{meuz_p1}-\eqref{meuz_p2} and Lemma \ref{omegam} (i),
          we obtain
         \be\label{meuz_p3}
         z(t_n,\tau, \omega_n, z_{0,n} )
         \rightharpoonup v(t ,\tau, \omega , v_{0 } )
         +  \varepsilon y(\theta_t \omega)
         u(t,\tau, \omega, u_{0})
         =z(t,\tau, \omega, z_{0} )
       \ee
         where the last equality follows from \eqref{tranz}.
         By \eqref{meuz_p2}-\eqref{meuz_p3} we have
           \be\label{meuz_p4}
         (u(t_n,\tau, \omega_n, u_{0,n}), z(t_n,\tau, \omega_n, z_{0,n} ))
            \rightharpoonup
         (u(t,\tau, \omega, u_{0}), z(t,\tau, \omega, z_{0}) )
         \ee
         in $\hone \times \ltwo$.
       Note that \eqref{meuz_p4} implies the weak upper semicontinuity
       of $(U(\cdot, \tau, \cdot, \cdot), Z(\cdot, \tau, \cdot, \cdot))$:
         $\R^+\times \Omega_m \times \hone\times \ltwo
         \to  2^{H^1(\mathbb{R}^n) \times L^2(\mathbb{R}^n)}$, and
         thus by Lemma \ref{mcocy}
           it is measurable as a multi-valued mapping  from 
         $\R^+\times \Omega_m \times \hone\times \ltwo$
         to $2^{H^1(\mathbb{R}^n) \times L^2(\mathbb{R}^n)}$
         with respect to 
         $\calb(\R^+) \times
         \calb(\Omega_m) \times \calb(\hone \times \ltwo)$.
         Since $\Omega =\bigcup\limits_{m=1}^\infty
         \Omega_m$ and $\Omega_m$ is measurable by Lemma \ref{omegam},
         we find that
         the mapping
          $(U(\cdot, \tau, \cdot, \cdot), V(\cdot, \tau, \cdot, \cdot))$
           from
          $\R^+\times \Omega  \times \hone\times \ltwo$
         to $ 2^{H^1(\mathbb{R}^n) \times L^2(\mathbb{R}^n)}$
           is measurable
         with respect to
         $\calb(\R^+) \times
         \calb(\Omega) \times \calb(\hone \times \ltwo)$.
         This completes    the proof.
   \end{proof}

  \begin{lem}\label{close}
    Suppose \eqref{f1}-\eqref{f3} hold.
    Then for every $t\in \R^+$, $\tau \in \R$,
    $\omega \in \Omega$ and $(u_0,v_0) \in \hone \times \ltwo$,
    the set
    $(U(t+\tau,\tau, \omega, u_0), Z(t+\tau,
        \tau, \omega, z_0))$  is closed in $\hone \times \ltwo$. 
  \end{lem}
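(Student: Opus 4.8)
The plan is to prove that the set $(U(t+\tau,\tau,\omega,u_0), Z(t+\tau,\tau,\omega,z_0))$ is sequentially closed, which suffices since $\hone \times \ltwo$ is metrizable. First I would fix $t\in\R^+$, $\tau\in\R$, $\omega\in\Omega$, $(u_0,v_0)\in\hone\times\ltwo$ and take an arbitrary sequence of solutions $(u_n(t+\tau,\tau,\omega,u_0), z_n(t+\tau,\tau,\omega,z_0))$ in this set that converges strongly in $\hone\times\ltwo$ to a limit $(\widetilde u,\widetilde z)$; the goal is to exhibit a single solution of \eqref{spde1}-\eqref{spde3} with initial data $(u_0,z_0)$ at time $\tau$ whose value at $t+\tau$ is $(\widetilde u,\widetilde z)$. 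The preparatory step is to pass to the $(u,v)$-variables of \eqref{pde1}-\eqref{pde3}: setting $v_0 = z_0-\varepsilon y(\theta_\tau\omega)u_0$ and $v_n(s)=z_n(s)-\varepsilon y(\theta_s\omega)u_n(s)$ for $s\ge\tau$, each $(u_n,v_n)$ is a solution of \eqref{pde1}-\eqref{pde3} with the \emph{common} initial datum $(u_0,v_0)$ by \eqref{tranz}. Since $y(\theta_{t+\tau}\omega)$ is a fixed scalar, strong convergence of $(u_n(t+\tau),z_n(t+\tau))$ yields $v_n(t+\tau)\to \widetilde v := \widetilde z-\varepsilon y(\theta_{t+\tau}\omega)\widetilde u$ strongly in $\ltwo$.

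The crucial step is to invoke Lemma \ref{weakc} with the constant sequences $u_{0,n}=u_0$ and $v_{0,n}=v_0$, which converge (strongly, hence weakly) to $(u_0,v_0)$; the $u_n$ here are genuinely distinct solutions sharing one initial datum, precisely the situation that lemma allows. Lemma \ref{weakc} then provides a solution $(u,v)=(u(\cdot,\tau,\omega,u_0),v(\cdot,\tau,\omega,v_0))$ with initial data $(u_0,v_0)$ such that, along a subsequence and for every $s\ge\tau$, $u_n(s,\tau,\omega,u_0)\rightharpoonup u(s,\tau,\omega,u_0)$ in $\hone$ and $v_n(s,\tau,\omega,v_0)\rightharpoonup v(s,\tau,\omega,v_0)$ in $\ltwo$. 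Evaluating at $s=t+\tau$ and comparing with the strong (hence weak) limits $\widetilde u$ and $\widetilde v$ found above, uniqueness of weak limits forces $u(t+\tau,\tau,\omega,u_0)=\widetilde u$ and $v(t+\tau,\tau,\omega,v_0)=\widetilde v$. Defining $z(\cdot,\tau,\omega,z_0)$ through \eqref{tranz} then produces a solution of \eqref{spde1}-\eqref{spde3} with $z(t+\tau,\tau,\omega,z_0)=\widetilde v+\varepsilon y(\theta_{t+\tau}\omega)\widetilde u=\widetilde z$, so $(\widetilde u,\widetilde z)$ lies in the solution set and the set is closed.

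I expect the difficulty here to be organizational rather than analytic: the substantive work is already packaged in the weak continuity Lemma \ref{weakc} (ultimately in the energy-equation argument of Lemma \ref{cprv}), so the main points to handle carefully are the passage between the $z$- and $v$-variables and the recognition that Lemma \ref{weakc} applies verbatim to the constant-initial-data case. The subsequence extraction in Lemma \ref{weakc} is harmless, since any subsequence of the convergent sequence $(u_n(t+\tau),z_n(t+\tau))$ still converges to $(\widetilde u,\widetilde z)$, and a weak limit can match a given strong limit in only one way.
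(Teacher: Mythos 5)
Your proof is correct and takes essentially the same route as the paper: the paper also passes to the $(u,v)$-variables via \eqref{tranz} and applies Lemma \ref{cprv} (i) with the constant sequences $t_n = t+\tau$, $\omega_n = \omega$, $u_{0,n}=u_0$, $v_{0,n}=v_0$ (after choosing $m$ with $\omega \in \Omega_m$), then identifies the resulting weak limit with the given strong limit. Your use of Lemma \ref{weakc} instead is an immaterial difference, since that lemma is exactly Lemma \ref{cprv} (i) specialized to constant $t_n$ and $\omega_n$.
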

  
  \begin{proof}
  Let 
$({\widetilde{u}}_n,  {\widetilde{z}}_n)
\in  (U(t+\tau,\tau, \omega, u_0), Z(t+\tau,
        \tau, \omega, z_0))$
        and  $({\widetilde{u}},  {\widetilde{z}}) \in \hone \times \ltwo$
        such that
        $({\widetilde{u}}_n,  {\widetilde{z}}_n)
        \to ({\widetilde{u}},  {\widetilde{z}})$.
  We    will prove
$({\widetilde{u}},  {\widetilde{z}})
\in
(U(t+\tau,\tau, \omega, u_0), Z(t+\tau,
        \tau, \omega, z_0))$.
     Since    $({\widetilde{u}}_n,  {\widetilde{z}}_n)
\in  (U(t+\tau,\tau, \omega, u_0), Z(t+\tau,
        \tau, \omega, z_0))$,
  there  exists  
$(u_n(\cdot, \tau,  \omega, u_{0}),
z_n(\cdot, \tau,  \omega, z_{0}))$ 
such that
\be\label{closep2}
({\widetilde{u}}_n,  {\widetilde{z}}_n)
= (u_n( t+\tau, \tau,  \omega, u_{0}),
z_n(t+\tau, \tau, \omega, z_{0})) .
\ee
By \eqref{tranz} and \eqref{closep2}  we have
\be\label{closep5}
({\widetilde{u}}_n,  {\widetilde{z}}_n)
= (u_n( t+\tau, \tau,  \omega, u_{0}),
v_n(t+\tau, \tau, \omega, v_{0})
+  \varepsilon y(\theta_{t+\tau} \omega)
u_n(t+\tau, \tau,  \omega, u_{0}) )
\ee
with 
$v_{0} =z_{0}
-\varepsilon y(\theta_{\tau} \omega) u_{0}$.
For given $\omega$, 
by Lemma \ref{omegam}, there exists $m \in \N$
such that $\omega \in \Omega_m$.
Then applying  Lemma \ref{cprv} (i)
to $(u_n(\cdot, \tau,  \omega, u_{0}),
v_n(\cdot, \tau,  \omega, v_{0}))$ 
with $t_n =t+\tau$,
$\omega_n =\omega$,
$u_{0,n} = u_0$
and $v_{0,n} = v_0$ we find that
   system \eqref{pde1}-\eqref{pde3} has a solution
     $(u(\cdot, \tau, \omega, u_0),  v(\cdot, \tau, \omega, v_0))$
     with initial condition $(u_0,v_0)$ at initial time $\tau$ such that,
     up to  a subsequence,
     $$ 
     u_n(t+\tau, \tau, \omega, u_{0})
     \rightharpoonup   u(t+\tau, \tau, \omega, u_{0})
       \ \mbox{in} \ \hone  
     $$
     and
     $$
     v_n(t+\tau, \tau, \omega, v_{0})
     \rightharpoonup   v(t, \tau, \omega, v_{0})
       \ \mbox{in} \   \ltwo,
     $$
which together with \eqref{closep5} and \eqref{tranz} implies
\be\label{closep7}
({\widetilde{u}}_n,  {\widetilde{z}}_n)
 \rightharpoonup
 (  u(t+\tau, \tau, \omega, u_{0}),  z(t+\tau, \tau, \omega, z_{0}) )
 \ \mbox{ in } \ 
 \hone \times \ltwo.
\ee
Since  $({\widetilde{u}}_n,  {\widetilde{z}}_n)
        \to ({\widetilde{u}},  {\widetilde{z}})$
        strongly in $\hone\times \ltwo$, 
        by \eqref{closep7} we get
        $$
          ({\widetilde{u}},  {\widetilde{z}})
        =  (  u(t+\tau, \tau, \omega, u_{0}),  z(t+\tau, \tau, \omega, z_{0}) )
\in
(U(t+\tau,\tau, \omega, u_0), Z(t+\tau,
        \tau, \omega, z_0))
        $$
as desired.
  \end{proof}

We are now ready to define a multi-valued non-autonomous
random dynamical system for problem \eqref{spde1}-\eqref{spde3}. 
    Let  
 $\Phi$: $\mathbb{R}^+ \times \R \times \Omega \times
 H^1(\mathbb{R}^n) \times L^2(\mathbb{R}^n)$
 $\to 2^{H^1(\mathbb{R}^n) \times L^2(\mathbb{R}^n)}$
 be a multi-valued mapping  given   by
 \begin{equation}
 \label{rdsw1}
 \Phi(t,  \tau, \omega, (u_0, z_0))
 =  ( U(t +\tau, \tau, \theta_{-\tau}
 \omega, u_0),  Z(t+\tau, \tau, \theta_{-\tau}
 \omega, z_0) )
 $$
 $$
 = \left \{ ( u(t +\tau, \tau, \theta_{-\tau}
 \omega, u_0),  z(t+\tau, \tau, \theta_{-\tau}
 \omega, z_0) ): \  (u,z) \mbox{ is a solution of  $\eqref{spde1}$-$\eqref{spde3}$ }
 \right \}
  \end{equation}
  for every
  $(t, \tau, \omega, (u_0, z_0)) \in \mathbb{R}^+
  \times \R \times \Omega
  \times H^1(\mathbb{R}^n) \times L^2(\mathbb{R}^n)$.

  Since $\theta_{-\tau}: \Omega \to \Omega$ is measurable, by Lemma
  \ref{meuz} we infer that
  for every $\tau \in \R$,
  $\Phi(\cdot, \tau, \cdot, \cdot) $:
         $\R^+\times \Omega \times \hone\times \ltwo
         \to  2^{H^1(\mathbb{R}^n) \times L^2(\mathbb{R}^n)}$
         is measurable. 
         It is clear that $\Phi$ satisfies
         (ii) and (iii) of Definition \ref{cocy}. Therefore,   
 by  Lemma \ref{close},  
           $\Phi$ is   
  a multivalued non-autonomous
cocycle   in  $\hone \times \ltwo$.
In addition,  by   Lemma \ref{cprv} (ii)
we find  the following upper semicontinuity of $\Phi$ 
which is needed for construction of random attractors.

  \begin{lem}\label{uscphi}
    Suppose \eqref{f1}-\eqref{f3} hold.
    Then for every $t\in \R^+$, $\tau \in \R$  and 
    $\omega \in \Omega$,
    the mapping 
    $\Phi(t, \tau, \omega, \cdot)$:
     $\hone \times \ltwo \to$
     $2^{\hone \times \ltwo}$
     is upper semicontinuous. 
  \end{lem}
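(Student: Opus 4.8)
The plan is to deduce upper semicontinuity from the strong continuity established in Lemma \ref{cprv} (ii) by way of the sufficient criterion of Lemma \ref{uscc}. Since $\Phi(t,\tau,\omega,\cdot)$ maps $\hone\times\ltwo$ into closed subsets of $\hone\times\ltwo$, by Lemma \ref{uscc} it is enough to verify the following: whenever $(u_{0,n},z_{0,n})\to(u_0,z_0)$ strongly in $\hone\times\ltwo$ and $({\widetilde{u}}_n,{\widetilde{z}}_n)\in\Phi(t,\tau,\omega,(u_{0,n},z_{0,n}))$, there exist an element of $\Phi(t,\tau,\omega,(u_0,z_0))$ and a subsequence of $\{({\widetilde{u}}_n,{\widetilde{z}}_n)\}$ converging strongly to it.

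First I would unwind the definition \eqref{rdsw1}: each $({\widetilde{u}}_n,{\widetilde{z}}_n)$ has the form $(u_n(t+\tau,\tau,\theta_{-\tau}\omega,u_{0,n}),\,z_n(t+\tau,\tau,\theta_{-\tau}\omega,z_{0,n}))$ for some solution $(u_n,z_n)$ of \eqref{spde1}-\eqref{spde3} with initial data $(u_{0,n},z_{0,n})$ at initial time $\tau$ and noise $\theta_{-\tau}\omega$. Writing $\widetilde{\omega}=\theta_{-\tau}\omega$, a fixed point of $\Omega$, I would use Lemma \ref{omegam} (ii) to pick $m\in\N$ with $\widetilde{\omega}\in\Omega_m$. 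Via the change of variable \eqref{tranz}, set $v_{0,n}=z_{0,n}-\varepsilon y(\theta_\tau\widetilde{\omega})u_{0,n}$, so that $(u_n,v_n)$ solves the pathwise system \eqref{pde1}-\eqref{pde3} and ${\widetilde{z}}_n=v_n(t+\tau,\tau,\widetilde{\omega},v_{0,n})+\varepsilon y(\theta_{t+\tau}\widetilde{\omega})\,u_n(t+\tau,\tau,\widetilde{\omega},u_{0,n})$. Since $(u_{0,n},z_{0,n})\to(u_0,z_0)$ strongly, we also have $v_{0,n}\to v_0:=z_0-\varepsilon y(\theta_\tau\widetilde{\omega})u_0$ strongly in $\ltwo$.

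The key step is then to apply Lemma \ref{cprv} (ii) with the \emph{constant} sequences $t_n\equiv t+\tau$ and $\omega_n\equiv\widetilde{\omega}\in\Omega_m$ (which trivially satisfy $t_n\to t+\tau$ and $\omega_n\to\widetilde{\omega}$ with $\omega_n,\widetilde{\omega}\in\Omega_m$), together with $(u_{0,n},v_{0,n})\to(u_0,v_0)$ strongly. This furnishes a solution $(u,v)=(u(\cdot,\tau,\widetilde{\omega},u_0),v(\cdot,\tau,\widetilde{\omega},v_0))$ of \eqref{pde1}-\eqref{pde3} and a subsequence along which $u_n(t+\tau,\tau,\widetilde{\omega},u_{0,n})\to u(t+\tau,\tau,\widetilde{\omega},u_0)$ in $\hone$ and $v_n(t+\tau,\tau,\widetilde{\omega},v_{0,n})\to v(t+\tau,\tau,\widetilde{\omega},v_0)$ in $\ltwo$. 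Because $y(\theta_{t+\tau}\widetilde{\omega})$ is a fixed scalar, passing to the limit in the identity for ${\widetilde{z}}_n$ and invoking \eqref{tranz} once more shows ${\widetilde{z}}_n\to z(t+\tau,\tau,\widetilde{\omega},z_0)$ in $\ltwo$ along the same subsequence. Hence $({\widetilde{u}}_n,{\widetilde{z}}_n)\to(u(t+\tau,\tau,\widetilde{\omega},u_0),z(t+\tau,\tau,\widetilde{\omega},z_0))$ strongly, and by \eqref{rdsw1} the limit lies in $\Phi(t,\tau,\omega,(u_0,z_0))$. Lemma \ref{uscc} then yields the claim.

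The step requiring the most care is a bookkeeping one rather than a genuine obstacle: correctly threading the noise shift $\theta_{-\tau}\omega$ through the cocycle definition \eqref{rdsw1} and the $z$--$v$ transformation \eqref{tranz}, and checking that the hypotheses of Lemma \ref{cprv} (ii) are met by the constant sequences $t_n\equiv t+\tau$ and $\omega_n\equiv\theta_{-\tau}\omega$. All the substantive analytic work---the strong convergence of solutions under strong convergence of initial data, which rests on the energy-equation argument---has already been done in Lemma \ref{cprv} (ii), so the present lemma is essentially an immediate corollary.
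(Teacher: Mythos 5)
Your proof is correct and follows exactly the paper's route: the paper's own proof of Lemma \ref{uscphi} is the one-line observation that it follows from Lemma \ref{cprv} (ii) combined with the criterion of Lemma \ref{uscc}. Your write-up simply makes explicit the bookkeeping the paper leaves implicit (unwinding \eqref{rdsw1}, choosing $m$ with $\theta_{-\tau}\omega\in\Omega_m$ via Lemma \ref{omegam}, translating between $z$ and $v$ via \eqref{tranz}, and applying Lemma \ref{cprv} (ii) with constant sequences $t_n\equiv t+\tau$, $\omega_n\equiv\theta_{-\tau}\omega$), all of which is sound.
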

  
  \begin{proof}
  This  follows
  directly
  from Lemma \ref{cprv} (ii)  and
  Lemma \ref{uscc}.
  \end{proof}

     By \eqref{tranz}  we have
    \begin{equation}
 \label{rdsw2}
  z(t+\tau, \tau, \theta_{-\tau}
 \omega, z_0)=
 v(t+\tau, \tau, \theta_{-\tau}
 \omega, v_0)
 +\varepsilon y(\theta_t \omega) 
  u(t +\tau, \tau, \theta_{-\tau}
 \omega, u_0) 
 \end{equation}
 with $   v_0 = z_0 -  \varepsilon y(\omega)u_0$.
 It is worth noticing that  
for each  $t\in \R^+$,
 $\tau \in \R$ and  $\omega \in \Omega$, 
 $$
 \Phi(t, \tau -t,  \theta_{-t} \omega, (u_0, z_0))
 $$
 $$
 =
 \left \{
 (u(\tau ,  \tau -t  , \theta_{-\tau} \omega, u_0),
  z(\tau , \tau -t ,  \theta_{- \tau}\omega, z_0))
   : \  (u,z) \mbox{ is a solution of  $\eqref{spde1}$-$\eqref{spde3}$}
  \right \}
  $$
  {\footnotesize
 \begin{equation}
 \label{shift}
 =
 \left \{
 (u(\tau ,  \tau -t  , \theta_{-\tau} \omega, u_0),
  v(\tau , \tau -t ,  \theta_{- \tau}\omega, v_0)
  + \varepsilon  y(\omega)
 u(\tau ,  \tau -t  , \theta_{-\tau} \omega, u_0)  )
  :   (u,v) \mbox{ is a solution of  $\eqref{pde1}$-$\eqref{pde3}$}
  \right \}.
 \end{equation}
 }

In the rest of the paper, we will investigate
the pullback asymptotic behavior of $\Phi$. 
From now on,   we fix a small  positive  number
$\delta$ such that 
\be
\label{delta}
\alpha -\delta>0 , \quad   \lambda +\delta^2 -\alpha \delta >0,
\ee
and  put 
\begin{equation}
\label{kappa}
\sigma =  \min \left  \{ {\frac {\delta}2}, \  {\frac {\alpha-\delta}4},  \ 
   {\frac {\delta c_2}4} \right \},
  \end{equation}
  where $c_2$ is the positive constant in \eqref{f2}.
  From now on,  we assume  $g$  satisfies
 \be
 \label{g1}
 \int_{-\infty}^0 
 e^{{\frac {1}{2\gamma +2}} \sigma  s }
  \| g(s+\tau,\cdot)\|^2ds
<  \infty ,
\quad \mbox{for all } \ \tau \in \R,
  \ee
   where $\gamma$ is the number     in \eqref{f1}.

As usual, for 
   a  bounded nonempty  subset 
 $B$  of $\hone \times \ltwo$,  the distance between $B$ and the
 origin is written as 
   $  \| B\| = \sup\limits_{ \phi  \in B}
   \| \phi\|_{\hone \times \ltwo }$. 
   In the next section, we will prove $\Phi$  has a $\cald$-pullback
   random attractor where 
   $\cald = \{ 
   D =\{ D(\tau, \omega): \tau \in \R, \omega \in \Omega \} \}$ is
   the  collection of all       families
 of 
  bounded nonempty   subsets of $\hone \times \ltwo $
 such that 
  for every    $\tau \in \R$   and $\omega \in \Omega$, 
$$
 \lim_{s \to   \infty} e^{ -  \sigma s} 
 \| D( \tau  -s, \theta_{ -s} \omega ) \| ^{\gamma +1} =0.
$$

\section{Asymptotic compactness of solutions}
\setcounter{equation}{0}

In this section,  we 
 derive uniform estimates   of   
system  \eqref{pde1}-\eqref{pde3} and establish
the pullback asymptotic compactness   of   solutions.
   We first provide the uniform 
   estimates in $\hone \times \ltwo$.
   
\begin{lem}
\label{lest}
 Assume that  
 \eqref{f1}-\eqref{f3}  and \eqref{g1}
  hold.  Let    $\tau \in \R$, $\omega \in \Omega$   
and $D=\{D(\tau, \omega)
: \tau \in \R,  \omega \in \Omega\}  \in \cald$.
   Then there exist   
    $\varepsilon_0 = \varepsilon_0 (\alpha, \lambda, f) \in (0,1)$
and   $T=T(\tau, \omega,  D) >0$ such that 
 for all  $\varepsilon \in (0, \varepsilon_0]$,
  $ t\ge  T$ and $s\in [-t, 0]$,
the solution $(u,v)$ of problem \eqref{pde1}-\eqref{pde3}
satisfies
$$
  \|    u(\tau +s, \tau -t, \theta_{-\tau}\omega, u_0) \|^2_\hone
  + \|  v(\tau +s, \tau -t, \theta_{-\tau}\omega, v_0 ) \|^2
  $$
  $$
  + \|u(\tau +s, \tau -t, \theta_{-\tau}\omega, u_0 )\|^{\gamma +1}_{L^{\gamma +1}
  (\R^n)}
  $$
  $$
  +\int_{\tau -t}^{\tau +s}
  e^{\int^{\xi}_{\tau+s} 
  (2\sigma -\varepsilon c
-\varepsilon c | y(\theta_{r-\tau} \omega)|^2)dr }
\left (
\|u(\xi, \tau-t, \theta_{-\tau} \omega, u_0 )\|^2_{H^1}
+ 
\|v(\xi, \tau-t, \theta_{-\tau} \omega, v_0 )\|^2
\right ) d\xi
  $$
  $$
   \le M   + 
 e^{ \int_{s}^{ 0}(2\sigma -\varepsilon c
-\varepsilon c | y(\theta_{r} \omega)|^2)dr }
R(\tau, \omega),
$$
 where $(u_0, z_0)\in D(\tau -t, \theta_{ -t} \omega)$
 and $v_0 = z_0 -\varepsilon y(\omega) u_0$,
   $M$ and $c$  are
   positive     constants  independent of $\tau$, $\omega$, $D$ 
  and $\varepsilon$, and $R(\tau, \cdot)$ is 
  a random variable.
  \end{lem}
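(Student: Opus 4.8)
The plan is to run the energy method on the functional
$E(t)=\|v\|^2+(\lambda+\delta^2-\alpha\delta)\|u\|^2+\|\nabla u\|^2+2\ii F(x,u)\,dx$, starting from the energy identity \eqref{ener1} with $\delta$ fixed by \eqref{delta}. First I would bound the dissipative term $2\delta(f(x,u),u)$ from below by $2\delta c_2\ii F\,dx+2\delta\ii\phi_2\,dx$ using \eqref{f2}; the choice of $\sigma$ in \eqref{kappa} is exactly what is needed so that the four nonpositive terms on the left of \eqref{ener1} dominate $2\sigma E(t)$ and still leave a positive multiple $\kappa(\|u\|_{\hone}^2+\|v\|^2)$ of dissipation to spare. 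On the right of \eqref{ener1} I would estimate $2(g,v)$ by Young's inequality (absorbing the $\|v\|^2$ into the spare dissipation) and treat the term $2\varepsilon y(\theta_t\omega)(f(x,u),u)$ exactly as in \eqref{estf_p2} via \eqref{f1} and \eqref{f3}. The remaining, sign-indefinite noise terms all carry a factor $\varepsilon|y(\theta_t\omega)|$ or $\varepsilon^2|y(\theta_t\omega)|^2$; using $\varepsilon|y|\le\tfrac12\varepsilon(1+|y|^2)$ I would move them onto the coefficient of $E$ rather than trying to absorb them, arriving (formally, and justified by the limiting procedure of \cite{bal1}) at a differential inequality of the form $\frac{d}{dt}E\le(-2\sigma+\varepsilon c+\varepsilon c|y(\theta_t\omega)|^2)E-\kappa(\|u\|_{\hone}^2+\|v\|^2)+\|g\|^2+\varepsilon c(1+|y(\theta_t\omega)|^2)+M_0$, with $c,M_0$ depending only on $f$ (through $\phi_1,\phi_2,\phi_3$) and on the embedding constants.

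Next I would integrate this inequality with the integrating factor $\exp(-\int p)$, where $p(t)=-2\sigma+\varepsilon c+\varepsilon c|y(\theta_t\omega)|^2$, over $[\tau-t,\tau+s]$, after replacing $\omega$ by $\theta_{-\tau}\omega$ and substituting $r=\xi-\tau$ (so that $y(\theta_\xi\theta_{-\tau}\omega)=y(\theta_{\xi-\tau}\omega)$). The spared dissipation $\kappa(\ldots)$ integrates into precisely the time-integral term on the left of the assertion, with weight $e^{\int_\xi^{\tau+s}p}=\exp\int^{\xi}_{\tau+s}(2\sigma-\varepsilon c-\varepsilon c|y(\theta_{r-\tau}\omega)|^2)\,dr$. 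Writing $\int_{-t}^s p=\int_{-t}^0 p+\int_0^s p$ lets me pull the common factor $e^{\int_s^0(2\sigma-\varepsilon c-\varepsilon c|y(\theta_r\omega)|^2)dr}$ out of both the initial-data term $e^{\int_{-t}^s p}E(\tau-t)$ and the forcing integral $\int_{-t}^s e^{\int_\rho^s p}\tilde h(\rho)\,d\rho$, where $\tilde h(\rho)=\|g(\rho+\tau)\|^2+\varepsilon c(1+|y(\theta_\rho\omega)|^2)+M_0\ge0$; this is exactly the exponential multiplying $R(\tau,\omega)$ in the statement.

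To finish I would control the two surviving quantities. For $\omega\in\Omega_m$ and $t\ge m$, Lemma \ref{omegam} gives $\int_{-t}^0|y(\theta_r\omega)|^2dr\le t/\alpha$, so the effective rate satisfies $\int_{-t}^0 p\le-(2\sigma-\varepsilon c(1+\alpha^{-1}))t=:-\beta t$; choosing $\varepsilon_0=\varepsilon_0(\alpha,\lambda,f)$ small enough so that $\beta\ge\sigma$ (hence $\beta\ge\tfrac{\sigma}{2\gamma+2}$), using that $\sigma$ in \eqref{kappa} depends only on $\alpha,\lambda,\delta,c_2$, makes the rate genuinely dissipative. The initial energy obeys $E(\tau-t)\le c(1+y(\omega)^2)\|D(\tau-t,\theta_{-t}\omega)\|^{\gamma+1}+c$ by \eqref{f1}, \eqref{f3} and the embedding $\hone\hookrightarrow L^{\gamma+1}(\R^n)$ (valid for the stated range of $\gamma$, including $\gamma=3$ when $n=3$), so $e^{\int_{-t}^0 p}E(\tau-t)\le e^{-\beta t}\,c(1+y(\omega)^2)\|D(\tau-t,\theta_{-t}\omega)\|^{\gamma+1}+ce^{-\beta t}\to0$ because $D\in\cald$; hence it is bounded by a random variable for all $t\ge T(\tau,\omega,D)$. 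The forcing integral factors as $e^{\int_s^0(2\sigma-\varepsilon c-\varepsilon c|y(\theta_r\omega)|^2)dr}\int_{-t}^s e^{\int_\rho^0 p}\tilde h(\rho)\,d\rho$, and since $e^{\int_\rho^0 p}\le e^{\beta\rho}$ for $\rho\le-m$, the integral $\int_{-\infty}^0 e^{\beta\rho}\tilde h(\rho)\,d\rho$ converges: the $\|g\|^2$ part by \eqref{g1} (because $\beta\ge\tfrac{\sigma}{2\gamma+2}$) and the $\varepsilon c(1+|y|^2)$ and $M_0$ parts by the temperedness of $y$ from Lemma \ref{omegam}. Adding this finite $\omega$-dependent integral into $R(\tau,\omega)$ and bounding $E(\tau+s)$ from below by $\|v\|^2+c\|u\|_{\hone}^2+2c_3\|u\|_{L^{\gamma+1}(\R^n)}^{\gamma+1}-2\|\phi_3\|_{L^1}$ via \eqref{f3} converts the estimate on $E$ into the asserted bound, the constant $2\|\phi_3\|_{L^1}$ together with the other $\phi_i$ constants being absorbed into $M$.

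The main obstacle is that the multiplicative noise produces sign-indefinite terms carrying the unbounded random factor $y(\theta_t\omega)$, so they cannot be absorbed into a fixed dissipation rate; the resolution is to retain them inside the exponential decay rate and to tame the accumulated exponent through the pathwise bound $\int_{-t}^0|y|^2\le t/\alpha$ on $\Omega_m$ from Lemma \ref{omegam}, which forces a net exponential decay once $\varepsilon\le\varepsilon_0$. The secondary delicate point is the convergence of the forcing integral, which requires matching the decay rate $\beta$ against the deliberately weak integrability rate $\tfrac{\sigma}{2\gamma+2}$ imposed on $g$ in \eqref{g1}.
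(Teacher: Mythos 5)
Your proposal is correct and follows essentially the same route as the paper: the paper's own proof of Lemma \ref{lest} simply invokes the energy equation \eqref{ener1} and defers the details to Lemma 4.2 of \cite{wan9}, and your argument is exactly that energy-equation computation --- the dissipation split dictated by the choice of $\sigma$ in \eqref{kappa}, the estimate \eqref{f2} on $2\delta(f(x,u),u)$, the sign-indefinite noise terms folded into the exponential rate $2\sigma-\varepsilon c-\varepsilon c|y(\theta_t\omega)|^2$, and the decay secured for $\varepsilon\le\varepsilon_0$ via the pathwise bound $\int_{-t}^0|y(\theta_r\omega)|^2\,dr\le t/\alpha$. Your appeal to $\Omega_m$ from Lemma \ref{omegam} for that last bound is interchangeable with the ergodic-theorem bound \eqref{utai_p11} that the paper uses in the companion Lemma \ref{utai}, and the remaining steps (decay of the initial term from $D\in\cald$ since $\beta\ge\sigma$, convergence of the forcing integral from \eqref{g1}, and the lower bound on the energy via \eqref{f3}) close the argument with no gaps.
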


\begin{proof}
This lemma can be proved by energy equation
\eqref{ener1}. The details are quite similar to
 the proof  Lemma 4.2
  in \cite{wan9} with minor  changes,  and hence 
  are omitted here.
\end{proof}

The next estimate is an improvement of Lemma 4.3
in \cite{wan9} on the tails of solutions outside a
bounded domains when time is sufficiently large.

\begin{lem}
\label{utai}
Assume that  
 \eqref{f1}-\eqref{f3}  and \eqref{g1}
  hold.  Let  $\eta>0$,   $\tau \in \R$, $\omega \in \Omega$   
and $D=\{D(\tau, \omega)
: \tau \in \R,  \omega \in \Omega\}  \in \cald$.
   Then there exist   
    $\varepsilon_0 = \varepsilon_0 (\alpha, \lambda, f)\in (0,1)$,
       $T=T(\tau, \omega,  D, \eta) >0$ 
       and $K=K(\tau, \omega, \eta) \ge 1$
       such that 
 for all  $\varepsilon \in (0, \varepsilon_0]$,
 $ t\ge  T$  and $s\in [-t,0]$,
the solution $(u,v)$ of problem \eqref{pde1}-\eqref{pde3}
satisfies
$$
\int_{|x| \ge K} 
\left (
| u(\tau +s, \tau -t,  \theta_{ -\tau} \omega, u_{0}  ) |^2
+| \nabla u(\tau +s, \tau -t,  \theta_{ -\tau} \omega, u_{0}  ) |^2
+
| v(\tau+s, \tau -t,  \theta_{ -\tau} \omega, v_{0}  ) |^2 
\right ) dx 
$$
$$
\le \eta 
+\eta
  e^{ \int_{s}^{ 0}(2\sigma -\varepsilon c
-\varepsilon c | y(\theta_{r} \omega)|^2)dr },
$$
  where $(u_0, z_0)\in D(\tau -t, \theta_{ -t} \omega)$
 and $v_0 = z_0 -\varepsilon y(\omega) u_0$.
 \end{lem}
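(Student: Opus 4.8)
The plan is to merge the cutoff argument of Lemma~\ref{tai1} with the weighted pullback estimate of Lemma~\ref{lest}, using the dissipation rate $2\sigma$ fixed in \eqref{kappa}. First I would fix $\delta$ as in \eqref{delta} and take the inner product of \eqref{pde2} with $\rh\, v$ in $\ltwo$, substituting for $v$ in $(u,v)$, $(\De u,v)$ and $(f(x,u),v)$ via \eqref{pde1} exactly as in the derivation of \eqref{tai1_p1}. This produces a differential identity for the truncated energy
$$
P_k(t)=\ii \rh\left(|v|^2+(\lambda+\delta^2-\alpha\delta)|u|^2+|\nabla u|^2+2F(x,u)\right)dx,
$$
whose dissipative part contains $2(\alpha-\delta)\int\rh|v|^2\,dx$, $2\delta\int\rh|\nabla u|^2\,dx$ and $2\delta\int\rh f(x,u)u\,dx$; bounding the last term from below by \eqref{f2} and invoking \eqref{kappa} yields a differential inequality of the form
$$
\frac{d}{dt}P_k+2\sigma P_k\le \varepsilon c\left(1+|y(\theta_t\omega)|^2\right)P_k+h_k(t),
$$
where the indefinite random terms carrying $y(\theta_t\omega)$ have been absorbed by passing to absolute values and using \eqref{f1}, \eqref{f3} and Young's inequality (as in \eqref{tai1_p3}), and the source $h_k$ consists of the commutator term $-4\int_{k\le|x|\le\sqrt2 k}\rhp\, v\,\nabla u\,\frac{x}{k^2}\,dx$ together with integrals over $|x|\ge k$ of $|g|^2$, $|\phi_1|^2$ and $|\phi_3|$.

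Next I would set $\beta(r)=2\sigma-\varepsilon c-\varepsilon c|y(\theta_r\omega)|^2$, multiply by the integrating factor $\exp\int\beta(r)\,dr$, integrate from $\tau-t$ to $\tau+s$, and replace $\omega$ by $\theta_{-\tau}\omega$ to match the pullback form. This expresses $P_k(\tau+s)$ as the sum of the weighted initial term $e^{-\int_{-t}^{s}\beta(r)\,dr}\,P_k(\tau-t)$ and a weighted time integral of $h_k$. Writing $e^{-\int_{-t}^{s}\beta(r)\,dr}=e^{-\int_{-t}^{0}\beta(r)\,dr}\,e^{\int_{s}^{0}\beta(r)\,dr}$ exhibits the weight $e^{\int_s^0\beta(r)\,dr}$ appearing in the statement, so the whole scheme parallels Lemma~4.3 of \cite{wan9}.

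Two contributions must then be shown small, uniformly for $s\in[-t,0]$. For the initial term it suffices to bound $e^{-\int_{-t}^{0}\beta(r)\,dr}\,P_k(\tau-t)\le\eta$, since then the full term is $\le\eta\,e^{\int_s^0\beta(r)\,dr}$. Here $P_k(\tau-t)\le C\|D(\tau-t,\theta_{-t}\omega)\|^{\gamma+1}+C$ by \eqref{f1} and \eqref{f3}, while for $\varepsilon\le\varepsilon_0$ small enough the growth bound of Lemma~\ref{omegam} (equivalently the ergodic average of $|y|^2$) gives $\int_{-t}^0\beta(r)\,dr\ge\sigma t$ for large $t$; the temperedness built into $\cald$ then forces $e^{-\sigma t}\|D(\tau-t,\theta_{-t}\omega)\|^{\gamma+1}\to0$, so the initial term falls below $\eta$ for $t\ge T$. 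For the source term, the commutator part is controlled by $\frac{C}{k}(\|\nabla u\|^2+\|v\|^2)$, which by the uniform bound of Lemma~\ref{lest} is made small for $k\ge K$; the $|\phi_1|^2$ and $|\phi_3|$ tails vanish as $k\to\infty$ because $\phi_1\in\ltwo$ and $\phi_3\in L^1(\R^n)$; and the weighted time integral of the $|g|^2$ tail is dominated using \eqref{g1}, the rate $2\sigma$ exceeding $\frac{\sigma}{2\gamma+2}$ so that dominated convergence makes it small for $k$ large. Collecting these, discarding the nonnegative $\int\rh|\nabla u|^2\,dx$ term and removing $2\int\rh F\,dx$ by $F\ge-\phi_3$ from \eqref{f3}, yields the asserted bound $\eta+\eta\,e^{\int_s^0\beta(r)\,dr}$.

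I expect the principal obstacle to be the simultaneous handling of the indefinite random coefficients and the pullback decay: one must verify that after taking absolute values of $y(\theta_r\omega)$ the integrating factor still delivers genuine exponential decay of the initial data for \emph{every} $\varepsilon\in(0,\varepsilon_0]$, which pins the smallness threshold $\varepsilon_0$ and the weight to those of Lemma~\ref{lest}, and that all the $k$- and $t$-dependent smallness is uniform in $s\in[-t,0]$.
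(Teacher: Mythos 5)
Your proposal reproduces the paper's argument essentially step for step (truncated energy, dissipation rate $2\sigma-\varepsilon c-\varepsilon c|y(\theta_t\omega)|^2$, integrating factor, pullback substitution, ergodic theorem plus the temperedness built into $\cald$ for the initial term, and \eqref{g1} with dominated convergence for the $g$-tail). The one step that does not survive a literal reading is your treatment of the commutator term: you say it "is controlled by $\frac{C}{k}(\|\nabla u\|^2+\|v\|^2)$, which by the uniform bound of Lemma \ref{lest} is made small for $k\ge K$." In the pullback setting there is no pointwise bound on $\|\nabla u(\xi)\|^2+\|v(\xi)\|^2$ that is uniform over $\xi\in[\tau-t,\tau+s]$ and over $t$: writing $\beta(r)=2\sigma-\varepsilon c-\varepsilon c|y(\theta_r\omega)|^2$, the pointwise estimate of Lemma \ref{lest} at an intermediate time $\xi=\tau+s'$ is of order $M+e^{\int_{s'}^{0}\beta(r)dr}\,R(\tau,\omega)$, which grows like $e^{\sigma|s'|}$ as $s'\downarrow -t$, $t\to\infty$ (unavoidably so, since the initial data range over $D(\tau-t,\theta_{-t}\omega)$, which is unbounded in $t$). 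If you insert this pointwise bound into the weighted time integral, the exponentials combine exactly, $e^{\int_{s}^{s'}\beta}\,e^{\int_{s'}^{0}\beta}=e^{\int_s^0\beta}$, and the contribution becomes $\frac{C}{k}\,R(\tau,\omega)\,e^{\int_s^0\beta(r)dr}\,(t+s)$; the factor $t+s$ is unbounded, so no $K=K(\tau,\omega,\eta)$ independent of $t$ can work. The correct (and the paper's) treatment is to keep $\frac{C}{k}\left(\|\nabla u(\xi)\|^2+\|v(\xi)\|^2\right)$ inside the weighted time integral and invoke the \emph{integral} estimate that is part of Lemma \ref{lest}'s conclusion, namely $\int_{\tau-t}^{\tau+s}e^{\int_{\tau+s}^{\xi}\beta(r-\tau)dr}\left(\|u(\xi)\|_{H^1}^2+\|v(\xi)\|^2\right)d\xi\le M+e^{\int_s^0\beta(r)dr}R(\tau,\omega)$; multiplying by $\frac{C}{k}\le\eta$ then gives a contribution of the admissible form $\eta M+\eta\, e^{\int_s^0\beta(r)dr}R(\tau,\omega)$, which is precisely why the asserted bound carries the second term $\eta\,e^{\int_s^0\beta(r)dr}$. (The pointwise treatment you describe is the one used in Lemma \ref{tai1} via Lemma \ref{estf}, but there the time interval is a fixed $[\tau,\tau+T]$ and the data converge strongly, which is what legitimizes it; it does not transfer to the pullback estimate.)

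Three smaller corrections. First, in the final assembly you should not "discard the nonnegative $\ii\rh|\nabla u|^2\,dx$ term": the gradient appears in the conclusion and must be kept; what is removed is only $2\ii\rh F(x,u)\,dx$, via $F\ge-\phi_3$ from \eqref{f3} together with the smallness of the $\phi_3$-tail, followed by restricting to $|x|\ge\sqrt{2}\,k$ where $\rh=1$. Second, the source $h_k$ also contains the tail of $|\phi_2|$, since \eqref{f2} is what converts $f(x,u)u$ into dissipation of the $F$-term. Third, the decay $\int_{-t}^0\beta(r)dr\ge\sigma t$ for large $t$ and small $\varepsilon$ comes from the ergodic theorem applied to $y$ for the given $\omega\in\Omega$, not from Lemma \ref{omegam} (which concerns the sets $\Omega_m$); your parenthetical already points in the right direction, but it is the ergodic limit that fixes $T=T(\tau,\omega,D,\eta)$ and the threshold $\varepsilon_0$.
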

  
 \begin{proof}
 Let $\rho$ be the smooth cut-off function given by
 \eqref{rho}.
 Taking the inner product of \eqref{pde2}
 with $\rho\left ({\frac {|x|^2}{k^2}}  \right ) 
 v$ in $L^2(\mathbb{R}^n)$, by \eqref{f1}-\eqref{f3}
 and \eqref{pde1}, after some calculations, we obtain the
 following inequality (see   Lemma 4.3
in \cite{wan9} for more details).
 $$
{\frac d{dt}}   \int_{\mathbb{R}^n}
  \rho\left ({\frac {|x|^2}{k^2}}  \right )
  \left (|v|^2 +
  (\lambda +\delta^2 -\alpha \delta)  | u  |^2
  +  | \nabla u  |^2 + 2   F(x, u)
 \right ) dx
 $$
 $$
 +  ( 2\sigma - \varepsilon c -
  \varepsilon c  |y(\theta_t \omega) |^2 )
  \int_{\mathbb{R}^n}
  \rho\left ({\frac {|x|^2}{k^2}}  \right )
  \left (|v|^2 +
  (\lambda +\delta^2 -\alpha \delta)  | u  |^2
  +  | \nabla u  |^2 + 2   F(x, u)
 \right ) dx
$$
 $$
\le 
{\frac ck} \left (\|\nabla u \|^2 + \| v\|^2
  \right )
     + c   \int_{|x| \ge k}
    \rho\left ({\frac {|x|^2}{k^2}}  \right ) 
      |g|^2   dx
   $$
 \be\label{utai_p1}
   +
 \varepsilon c  (1+ |y(\theta_t \omega) |^2)
  \int_{|x| \ge k}
    \rho\left ({\frac {|x|^2}{k^2}}  \right ) 
    ( |\phi_1|^2 + |\phi_2| + |\phi_3| )  dx.
\ee
By \eqref{utai_p1},  for every  $\eta>0$,  there exists
  $K_1 = K_1(\eta) \ge 1$ such that   for all $k \ge K_1$,
   $$
{\frac d{dt}}   \int_{\mathbb{R}^n}
  \rho\left ({\frac {|x|^2}{k^2}}  \right )
  \left (|v|^2 +
  (\lambda +\delta^2 -\alpha \delta)  | u  |^2
  +  | \nabla u  |^2 + 2   F(x, u)
 \right ) dx
 $$
 $$
 +  ( 2\sigma - \varepsilon c -
  \varepsilon c  |y(\theta_t \omega) |^2 )
  \int_{\mathbb{R}^n}
  \rho\left ({\frac {|x|^2}{k^2}}  \right )
  \left (|v|^2 +
  (\lambda +\delta^2 -\alpha \delta)  | u  |^2
  +  | \nabla u  |^2 + 2   F(x, u)
 \right ) dx
$$
   \be\label{utai_p2}
\le 
\eta  \left ( 1+ \|\nabla u \|^2 + \| v\|^2
  \right )
  +
  \varepsilon  \eta   (1+ |y(\theta_t \omega) |^2)
     + c   \int_{|x| \ge k} 
      |g(t,x)|^2  dx.
      \ee
      Multiplying \eqref{utai_p2} by
      $e^{ \int_0^{t}(2\sigma -\varepsilon c
-\varepsilon c | y(\theta_r \omega)|^2)dr }$
and then integrating on $(\tau-t,\tau+s)$ with
$t\ge 0$  and $s\in [-t,0]$,  
      we  get  for all $k \ge K_1$,  
  $$
  \ii
  \rho\left ({\frac {|x|^2}{k^2}}  \right )
  \left (|v (\tau +s, \tau -t, \omega, v_0)  |^2 +
  (\lambda +\delta^2 -\alpha \delta)  | u (\tau+s, \tau -t, \omega, u_0) |^2
  \right ) dx
  $$
  $$
  + 
    \ii
  \rho\left ({\frac {|x|^2}{k^2}}  \right )
 \left ( | \nabla u (\tau+s, \tau -t, \omega, u_0) |^2 
  + 2   F(x, u (\tau+s, \tau -t, \omega, u _0))
 \right ) dx
 $$
  $$
  \le
  e^{ \int_{\tau+s}^{\tau -t}(2\sigma -\varepsilon c
-\varepsilon c | y(\theta_r \omega)|^2)dr }
 \int_{\mathbb{R}^n}
  \rho\left ({\frac {|x|^2}{k^2}}  \right )
 \left (
 | v_0 (x)  |^2 + (\lambda +\delta^2 -\alpha \delta) | u _0 (x) |^2
 \right ) dx 
 $$
 $$+
 e^{ \int_ {\tau+s}^{\tau -t}(2\sigma -\varepsilon c
-\varepsilon c | y(\theta_r \omega)|^2)dr }
 \int_{\mathbb{R}^n}
  \rho\left ({\frac {|x|^2}{k^2}}  \right )
 \left (
    | \nabla u_0  (x) |^2 + 2 F(x, u_0 (x) )
  \right )dx
  $$
  $$
  +\eta  \int^ {\tau+s}_{\tau -t} 
   e^{ \int_ {\tau+s}^{\xi}(2\sigma -\varepsilon c
-\varepsilon c | y(\theta_r \omega)|^2)dr }
\left (
  1+ \|\nabla u (\xi, \tau -t, \omega, u_0) \|^2 + \| v
  (\xi,\tau -t, \omega, v_0) \|^2
\right ) d\xi
  $$
   $$
  + \varepsilon \eta  \int^{\tau+s}_{\tau -t} 
   e^{ \int_{\tau+s}^{\xi}(2\sigma -\varepsilon c
-\varepsilon c | y(\theta_r \omega)|^2)dr }
\left (
 1+  |y(\theta_\xi \omega)|^2 
\right ) d\xi
  $$
    \be\label{utai_p5}
  + c \int^ {\tau+s}_{\tau -t} 
   e^{ \int_ {\tau+s}^{\xi}(2\sigma -\varepsilon c
-\varepsilon c | y(\theta_r \omega)|^2)dr }
\int_{|x| \ge k} |g(\xi ,x)|^2 dx d\xi.
  \ee
 Replacing 
  $\omega$  by $\theta_{-\tau} \omega$
  in \eqref{utai_p5} we  get,  
  for all $t\ge 0$,  $s\in [-t,0]$
  and  $k \ge K_1$, 
   $$
  \ii
  \rho\left ({\frac {|x|^2}{k^2}}  \right )
  \left (|v (\tau +s, \tau -t, \theta_{-\tau}\omega, v_0)  |^2 +
  (\lambda +\delta^2 -\alpha \delta)  
  | u (\tau+s, \tau -t, \theta_{-\tau}\omega, u_0) |^2
  \right ) dx
  $$
  $$
  + 
    \ii
  \rho\left ({\frac {|x|^2}{k^2}}  \right )
 \left ( | \nabla u (\tau+s, \tau -t, \theta_{-\tau}\omega, u_0) |^2 
  + 2   F(x, u (\tau+s, \tau -t, \theta_{-\tau}\omega, u _0))
 \right ) dx
 $$
  $$
  \le
  e^{ \int_{\tau+s}^{\tau -t}(2\sigma -\varepsilon c
-\varepsilon c | y(\theta_{r-\tau} \omega)|^2)dr }
 \int_{\mathbb{R}^n}
  \rho\left ({\frac {|x|^2}{k^2}}  \right )
 \left (
 | v_0 (x)  |^2 + (\lambda +\delta^2 -\alpha \delta) | u _0 (x) |^2
 \right ) dx 
 $$
 $$+
 e^{ \int_ {\tau+s}^{\tau -t}(2\sigma -\varepsilon c
-\varepsilon c | y(\theta_{r-\tau} \omega)|^2)dr }
 \int_{\mathbb{R}^n}
  \rho\left ({\frac {|x|^2}{k^2}}  \right )
 \left (
    | \nabla u_0  (x) |^2 + 2 F(x, u_0 (x) )
  \right )dx
  $$
  $$
  +\eta  \int^ {\tau+s}_{\tau -t} 
   e^{ \int_ {\tau+s}^{\xi}(2\sigma -\varepsilon c
-\varepsilon c | y(\theta_{r-\tau} \omega)|^2)dr }
\left (
  1+ \|\nabla u (\xi, \tau -t,  \theta_{-\tau}\omega, u_0) \|^2 + \| v
  (\xi,\tau -t, \theta_{-\tau}\omega, v_0) \|^2
\right ) d\xi
  $$
   $$
  + \varepsilon \eta  \int^{\tau+s}_{\tau -t} 
   e^{ \int_{\tau+s}^{\xi}(2\sigma -\varepsilon c
-\varepsilon c | y(\theta_{r-\tau} \omega)|^2)dr }
\left (
 1+  |y(\theta_{\xi -\tau} \omega)|^2 
\right ) d\xi
  $$
 $$
  + c \int^ {\tau+s}_{\tau -t} 
   e^{ \int_ {\tau+s}^{\xi}(2\sigma -\varepsilon c
-\varepsilon c | y(\theta_{r-\tau} \omega)|^2)dr }
\int_{|x| \ge k} |g(\xi ,x)|^2 dx d\xi
$$
 $$
  \le
  e^{ \int_{s}^{ -t}(2\sigma -\varepsilon c
-\varepsilon c | y(\theta_{r} \omega)|^2)dr }
\ii
  \rho\left ({\frac {|x|^2}{k^2}}  \right )
 \left (
 | v_0 (x)  |^2 + (\lambda +\delta^2 -\alpha \delta) | u _0 (x) |^2
 \right ) dx 
 $$
 $$+
 e^{ \int_ {s}^{-t}(2\sigma -\varepsilon c
-\varepsilon c | y(\theta_{r} \omega)|^2)dr }
 \int_{\mathbb{R}^n}
  \rho\left ({\frac {|x|^2}{k^2}}  \right )
 \left (
    | \nabla u_0  (x) |^2 + 2 F(x, u_0 (x) )
  \right )dx
  $$
  $$
  +\eta  \int^ {\tau+s}_{\tau -t} 
   e^{ \int_ {\tau+s}^{\xi}(2\sigma -\varepsilon c
-\varepsilon c | y(\theta_{r-\tau} \omega)|^2)dr }
\left (
    \|\nabla u (\xi, \tau -t,  \theta_{-\tau}\omega, u_0) \|^2 + \| v
  (\xi,\tau -t, \theta_{-\tau}\omega, v_0) \|^2
\right ) d\xi
  $$
   $$
  + (1+\varepsilon ) \eta  \int^{s}_{ -t} 
   e^{ \int_{s}^{\xi}(2\sigma -\varepsilon c
-\varepsilon c | y(\theta_{r} \omega)|^2)dr }
\left (
 1+  |y(\theta_{\xi } \omega)|^2 
\right ) d\xi
  $$
 $$
  + c \int^ {s}_{ -t} 
   e^{ \int_ {s}^{\xi}(2\sigma -\varepsilon c
-\varepsilon c | y(\theta_{r} \omega)|^2)dr }
\int_{|x| \ge k} |g(\xi+\tau ,x)|^2 dx d\xi
$$
 $$
  \le
  C e^{ \int_{s}^{ -t}(2\sigma -\varepsilon c
-\varepsilon c | y(\theta_{r} \omega)|^2)dr }
\ii
 \left ( \| v_0  \|^2 + \|  u _0  \|^2_{H^1} + \|u_0\|^{\gamma +1}_{H^1}
 \right ) dx 
 $$
  $$
  +\eta  \int^ {\tau+s}_{\tau -t} 
   e^{ \int_ {\tau+s}^{\xi}(2\sigma -\varepsilon c
-\varepsilon c | y(\theta_{r-\tau} \omega)|^2)dr }
\left (
    \|\nabla u (\xi, \tau -t,  \theta_{-\tau}\omega, u_0) \|^2 + \| v
  (\xi,\tau -t, \theta_{-\tau}\omega, v_0) \|^2
\right ) d\xi
  $$
   $$
  + (1+\varepsilon ) \eta  \int^{0}_{ - \infty} 
   e^{ \int_{s}^{\xi}(2\sigma -\varepsilon c
-\varepsilon c | y(\theta_{r} \omega)|^2)dr }
\left (
 1+  |y(\theta_{\xi } \omega)|^2 
\right ) d\xi
  $$
 $$
  + c \int^ {0}_{ -\infty} 
   e^{ \int_ {s}^{\xi}(2\sigma -\varepsilon c
-\varepsilon c | y(\theta_{r} \omega)|^2)dr }
\int_{|x| \ge k} |g(\xi+\tau ,x)|^2 dx d\xi
$$
 $$
  \le
  C_1 e^{ \int_{s}^{ 0}(2\sigma -\varepsilon c
-\varepsilon c | y(\theta_{r} \omega)|^2)dr }
e^{ \int_{0}^{ -t}(2\sigma -\varepsilon c
-\varepsilon c | y(\theta_{r} \omega)|^2)dr }
(1 + \| D(\tau -t, \theta_{-t} \omega )\|^{\gamma +1})
 $$
  $$
  +\eta 
   \int^ {\tau+s}_{\tau -t} 
   e^{ \int_ {\tau+s}^{\xi}(2\sigma -\varepsilon c
-\varepsilon c | y(\theta_{r-\tau} \omega)|^2)dr }
\left (
    \|\nabla u (\xi, \tau -t,  \theta_{-\tau}\omega, u_0) \|^2 + \| v
  (\xi,\tau -t, \theta_{-\tau}\omega, v_0) \|^2
\right ) d\xi
  $$
   $$
  + (1+\varepsilon ) \eta 
  e^{ \int_{s}^{ 0}(2\sigma -\varepsilon c
-\varepsilon c | y(\theta_{r} \omega)|^2)dr }
    \int^{0}_{ - \infty} 
   e^{ \int_{0}^{\xi}(2\sigma -\varepsilon c
-\varepsilon c | y(\theta_{r} \omega)|^2)dr }
\left (
 1+  |y(\theta_{\xi } \omega)|^2 
\right ) d\xi
  $$
 \be\label{utai_p7}
  + c 
  e^{ \int_{s}^{ 0}(2\sigma -\varepsilon c
-\varepsilon c | y(\theta_{r} \omega)|^2)dr }
  \int^ {0}_{ -\infty} 
   e^{ \int_ {0}^{\xi}(2\sigma -\varepsilon c
-\varepsilon c | y(\theta_{r} \omega)|^2)dr }
\int_{|x| \ge k} |g(\xi+\tau ,x)|^2 dx d\xi.
\ee
Note that
$$
\lim_{t\to \infty} {\frac 1t}
\int_{-t}^0 |y(\theta_r \omega)|^2 dr
=E(y^2) = {\frac 1{2\alpha}}.
$$
Therefore, there  exists $T=T(\omega)>0$ such that
for all $t \ge T$,
\be\label{utai_p11}
{\frac 1t}
\int_{-t}^0 |y(\theta_r \omega)|^2 dr
\le  {\frac 1{\alpha}}.
\ee
Let $\varepsilon_1 ={\frac {\alpha \sigma}{c(1+\alpha)}}$.
Then by  \eqref{utai_p11} we get, for all $\varepsilon \in (0, \varepsilon_1)$
and $t\ge T$,
\be\label{utai_p14}
 e^{ \int_{0}^{ -t}(2\sigma -\varepsilon c
-\varepsilon c | y(\theta_{r} \omega)|^2)dr }
\le e^{- \sigma  t}.
\ee
Since $D\in\cald$,  by \eqref{utai_p14} we have
for all $\varepsilon \in (0, \varepsilon_1)$,
\be\label{utai_p20}
 \lim_{t \to \infty}
  e^{ \int_0^{ -t}(2\sigma -\varepsilon c
-\varepsilon c | y(\theta_{r} \omega)|^2)dr }
  (1 + \| D(\tau -t, \theta_{-t} \omega )\|^{\gamma +1})
=0.
 \ee
 By  \eqref{utai_p14} and  \eqref{g1} we find
  \be\label{utai_p22}
  \int^ {0}_{ -\infty} 
   e^{ \int_ {0}^{\xi}(2\sigma -\varepsilon c
-\varepsilon c | y(\theta_{r} \omega)|^2)dr }
\int_{\R^n} |g(\xi+\tau ,x)|^2 dx d\xi <\infty,
\ee
and
\be\label{utai_p24}
    \int^{0}_{ - \infty} 
   e^{ \int_{0}^{\xi}(2\sigma -\varepsilon c
-\varepsilon c | y(\theta_{r} \omega)|^2)dr }
\left (
 1+  |y(\theta_{\xi } \omega)|^2 
\right ) d\xi <\infty.
 \ee
 Notice that \eqref{utai_p22} implies
 \be\label{utai_p26}
 \lim_{k\to \infty}
  \int^ {0}_{ -\infty} 
   e^{ \int_ {0}^{\xi}(2\sigma -\varepsilon c
-\varepsilon c | y(\theta_{r} \omega)|^2)dr }
\int_{|x| \ge k } |g(\xi+\tau ,x)|^2 dx d\xi =0
\ee
By \eqref{utai_p7}, \eqref{utai_p20},
\eqref{utai_p24}-\eqref{utai_p26}  and 
Lemma \ref{lest},  we 
infer that   there exist  $K_2\ge K_1$
and $T_1\ge T$ such that 
  for all $t\ge T_1$,  $s\in [-t,0]$,
  $\varepsilon \in (0, \varepsilon_1)$,
  and  $k \ge K_1$, 
   $$
  \ii
  \rho\left ({\frac {|x|^2}{k^2}}  \right )
  \left (|v (\tau +s, \tau -t, \theta_{-\tau}\omega, v_0)  |^2 +
  (\lambda +\delta^2 -\alpha \delta)  
  | u (\tau+s, \tau -t, \theta_{-\tau}\omega, u_0) |^2
  \right ) dx
  $$
  $$
  + 
    \ii
  \rho\left ({\frac {|x|^2}{k^2}}  \right )
 \left ( | \nabla u (\tau+s, \tau -t, \theta_{-\tau}\omega, u_0) |^2 
  + 2   F(x, u (\tau+s, \tau -t, \theta_{-\tau}\omega, u _0))
 \right ) dx
 $$
 \be\label{utai_p30}
 \le 
 C_2\eta
 +C_2\eta
   e^{ \int_{s}^{ 0}(2\sigma -\varepsilon c
-\varepsilon c | y(\theta_{r} \omega)|^2)dr }.
\ee
Note that  $ \rho\left ({\frac {|x|^2}{k^2}}  \right ) =1$
for $|x| \ge \sqrt{2} k$, which 
together  with
\eqref{f3} and \eqref{utai_p30} completes the proof.
 \end{proof}
  
    Next, we prove the asymptotic compactness
    of solutions of   \eqref{pde1}-\eqref{pde3}
    by the energy equation \eqref{ener1}.
    To this end, we  set for $(u,v) \in \hone \times \ltwo$,
    \be\label{enerE}
     E(u,v)
    =\| v\|^2 + (\lambda + \delta^2 -\alpha \delta) \| u\|^2
    +\| \nabla u \|^2 +2\ii F(x, u) dx.
    \ee
    By \eqref{ener1} we get
    \be\label{ener}
    {\frac {dE}{dt}}
    + (4 \sigma -2\varepsilon |y(\theta_t \omega)|) E(u,v)
    =G(u,v),
    \ee
    where  $G(u,v)$ is give by
    \be\label{enerH}
    G ( u(t,\tau,\omega, u_0), v (t,\tau,\omega, v_0 ))
    =
    -2  \left (
    \alpha -\delta-2\sigma + \varepsilon (|y(\theta_t \omega)|
    + y(\theta_t \omega) )
    \right ) \| v\|^2
    $$
    $$
    -2 (\lambda +\delta^2 -\alpha \delta)
    (\delta -2\sigma  + \varepsilon (|y(\theta_t \omega)|
    - y(\theta_t \omega)  )) \| u\|^2
    $$
    $$
    -2 (
    \delta -2\sigma  + \varepsilon (|y(\theta_t \omega)|
    - y(\theta_t \omega)  )) \| \nabla u \|^2
    +2 (g,v)
    $$
    $$
    -2 \varepsilon (\varepsilon y(\theta_t \omega) -2 \delta)
    y(\theta_t \omega) (u,v)
    +2 (\varepsilon y(\theta_t \omega) -\delta ) (f(x,u), u)
    $$
    $$
    + 4 (2\sigma -\varepsilon  |y(\theta_t \omega)| )
    \ii F(x, u) dx.
    \ee
    Multiplying \eqref{ener} by
    $e^{\int_0^t  (4 \sigma -2\varepsilon |y(\theta_r \omega)|)  dr }$
    and  then solving for $E(u,v)$,
     we obtain, for
    $\tau \in \R$, $t \ge \tau$ and $\omega \in \Omega$,
    \be\label{ene}
    E(u(t,\tau, \omega, u_0),   v(t,\tau, \omega, v_0) )
    = 
    e^{\int_t^\tau  (4 \sigma -2\varepsilon |y(\theta_r \omega)|)  dr } E(u_0,v_0)
    $$
    $$
    + \int_\tau^t
    e^{\int_t^s  (4 \sigma -2\varepsilon |y(\theta_r \omega)|)  dr }
    G(u(s,\tau, \omega, u_0),   v(s,\tau, \omega, v_0) ) ds.
    \ee

\begin{lem}
\label{asyv}
Assume that  
 \eqref{f1}-\eqref{f3}  and \eqref{g1}
  hold. 
   Then there exists   
    $\varepsilon_0 = \varepsilon_0 (\alpha, \lambda, f)\in (0,1)$
       such that 
 for all  $\varepsilon \in (0, \varepsilon_0]$,
 $\tau \in \R$  and $\omega \in \Omega$, 
 the  sequence   
 $$
 \left \{ (u (\tau,  \tau -t_n, \theta_{-\tau} \omega,  u_{0,n} ),
  v (\tau,  \tau -t_n, \theta_{-\tau} \omega,  v_{0,n}) )
  \right \}_{n=1}^\infty
  $$
  of solutions of 
 \eqref{pde1}-\eqref{pde3}
    has a
 convergent  subsequence in $ H^1(\mathbb{R}^n)
\times L^2(\mathbb{R}^n)$   if 
 $t_n \to \infty$ and $(u_{0,n}, z_{0,n}) \in
  D(\tau -t_n, \theta_{-t_n} \omega )$ with
$D    \in  \cald$
and   $v_{0,n} = z_{0,n} -\varepsilon y(\omega) u_{0,n}$.
 \end{lem}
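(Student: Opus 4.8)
The plan is to follow Ball's energy-equation method \cite{bal1}, adapted to the present pullback stochastic setting, using the uniform estimates of Lemma \ref{lest}, the tail estimate of Lemma \ref{utai}, the weak continuity of Lemma \ref{weakc} and the energy identity \eqref{ene}. The key simplification here is that the parameter $\theta_{-\tau}\omega$ is \emph{fixed} (only $t_n$ and the initial data vary), so every coefficient and exponential weight appearing in $G$ (see \eqref{enerH}) and in \eqref{ene} is the same for all $n$; this is what makes the limiting procedure tractable. Throughout I abbreviate $u_n(s)=u(s,\tau-t_n,\theta_{-\tau}\omega,u_{0,n})$, $v_n(s)=v(s,\tau-t_n,\theta_{-\tau}\omega,v_{0,n})$ and $\xi_n=(u_n(\tau),v_n(\tau))$.

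First I would establish boundedness and extract weak limits. By Lemma \ref{lest} (with $s=0$, $t=t_n$) the sequence $\xi_n$ is bounded in $\hone\times\ltwo$ for all large $n$, so along a subsequence $\xi_n\rightharpoonup\xi=(\widetilde u,\widetilde v)$ weakly. For each fixed integer $k\ge1$, Lemma \ref{lest} (with $s=-k$) also bounds $(u_n(\tau-k),v_n(\tau-k))$; a diagonal argument over $k$ together with the weak continuity of Lemma \ref{weakc} (applied with initial time $\tau-k$ and the fixed parameter $\theta_{-\tau}\omega$) produces a solution $(\widehat u,\widehat v)$ defined for all $s\le\tau$ such that, along the subsequence, $\xi=(\widehat u(\tau),\widehat v(\tau))$ and $(u_n(s),v_n(s))\rightharpoonup(\widehat u(s),\widehat v(s))$ in $\hone\times\ltwo$ for every $s\le\tau$.

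The core step is to prove $\limsup_{n\to\infty}E(u_n(\tau),v_n(\tau))\le E(\widetilde u,\widetilde v)$, where $E$ is the functional \eqref{enerE}. For fixed $k$ I would write \eqref{ene} on $[\tau-k,\tau]$ both for each $\xi_n$ and for the limit solution (legitimate since $(\widehat u,\widehat v)$ is a genuine solution), and estimate $\limsup_n$ termwise. The prefactor multiplying $E(u_n(\tau-k),v_n(\tau-k))$ equals $e^{-\int_{-k}^0(4\sigma-2\varepsilon|y(\theta_r\omega)|)dr}$; multiplying it by the growth bound of Lemma \ref{lest} and invoking the ergodic average $\tfrac1k\int_{-k}^0|y(\theta_r\omega)|^2\,dr\to E(y^2)=\tfrac1{2\alpha}$, this contribution tends to $0$ as $k\to\infty$ once $\varepsilon_0$ is so small that the relevant exponents have strictly negative averages. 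For the time integral of $G$, the choice of $\sigma$ in \eqref{kappa} makes the coefficients of $\|v\|^2$, $\|u\|^2$ and $\|\nabla u\|^2$ nonpositive; since these coefficients and the exponential weight are $n$-independent and nonnegative, Fatou's lemma together with weak lower semicontinuity of the norm (exactly as in \eqref{cprv_p76}--\eqref{cprv_p78}) gives the required one-sided bound for those terms, while the remaining terms $2(g,v_n)$, the $(u_n,v_n)$ term, $(f(x,u_n),u_n)$ and $\ii F(x,u_n)\,dx$ are to be passed to the limit as equalities.

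This last passage is the main obstacle, since $\hone\hookrightarrow\ltwo$ is not compact on $\R^n$. I would resolve it exactly as in Lemma \ref{cprv}: the tail estimate of Lemma \ref{utai} (uniform in $s$) combined with the compact embedding $H^1(B_K)\hookrightarrow L^2(B_K)$ upgrades $u_n(s)\rightharpoonup\widehat u(s)$ to strong convergence in $\ltwo$ for each $s$; then the growth conditions \eqref{f1}, \eqref{f3} and the dominated-convergence arguments of \eqref{cprv_p58} and \eqref{cprv_p66} give convergence of $(f(x,u_n(s)),u_n(s))$ and of $\ii F(x,u_n(s))\,dx$, while strong $L^2$-convergence of $u_n(s)$ against weak convergence of $v_n(s)$ handles $(u_n(s),v_n(s))$ and $(g,v_n(s))$, the uniform bounds of Lemma \ref{lest} furnishing integrable dominating functions for the $s$-integrals. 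Rewriting the integral of $G$ over the limit solution back in terms of $E$ via \eqref{ene} and then letting $k\to\infty$ yields $\limsup_n E(u_n(\tau),v_n(\tau))\le E(\widetilde u,\widetilde v)$. Finally, the same tail-plus-local-compactness argument gives $\ii F(x,u_n(\tau))\,dx\to\ii F(x,\widetilde u)\,dx$, and since the quadratic part of $E$ is equivalent to the square of the $\hone\times\ltwo$ norm by \eqref{delta}, subtracting the $F$-terms shows the norms of $\xi_n$ converge to that of $\xi$; combined with the weak convergence this forces $\xi_n\to\xi$ strongly in $\hone\times\ltwo$, which is the assertion.
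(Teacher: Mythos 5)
Your proposal is correct and follows essentially the same route as the paper's own proof: weak limits at the pullback times $\tau-k$ via Lemma \ref{lest}, Lemma \ref{weakc} and a diagonal argument, the energy identity \eqref{ene} on $[\tau-k,\tau]$ with the prefactor killed by the ergodic bound, Fatou plus weak lower semicontinuity for the dissipative quadratic terms, and Lemma \ref{utai} plus local compactness to pass the nonlinear and forcing terms to the limit, before letting $k\to\infty$ and using norm equivalence. The only cosmetic difference is that you patch the limit solutions into a single solution on $(-\infty,\tau]$, whereas the paper keeps a separate limit solution $(u_m,v_m)$ for each pullback depth $m$; both are legitimate and the estimates are identical.
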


  \begin{proof}
  By  Lemma \ref{lest}
  with  $s=0$  we find that
  there exists $\varepsilon_1 \in (0,1)$
  such that for every $\varepsilon \in (0, \varepsilon_1]$, 
  the sequence 
    $( u(\tau, \tau -t_n,  \theta_{-\tau} \omega, u_{0,n}),
   v(\tau, \tau -t_n,  \theta_{-\tau}\omega, v_{0,n})  )$
   is bounded in $\hone \times \ltwo$.
   Therefore,  
  there exist
  $(\ut, \vt) \in \hone \times \ltwo$ 
  and a subsequence (not relabeled) such that
  \be
  \label{asyv_p1}
   ( u(\tau, \tau -t_n,  \theta_{-\tau} \omega, u_{0,n}),
   v(\tau, \tau -t_n,  \theta_{-\tau}\omega, v_{0,n})  )
  \rightharpoonup  (\ut, \vt) \  \mbox{in} \ \hone \times \ltwo,
  \ee
  which implies 
  \be
  \label{asyv_p2}
  \liminf_{n \to \infty} \| ( u(\tau, \tau -t_n,  \theta_{-\tau} \omega, u_{0,n}),
   v(\tau, \tau -t_n,  \theta_{-\tau}\omega, v_{0,n})  )\|_{H^1 \times L^2}
  \ge \| (\ut, \vt) \|_{H^1 \times L^2}.
  \ee
  The proof will be completed if we can show
   \be
  \label{asyv_p3}
  \limsup_{n \to \infty} \| ( u(\tau, \tau -t_n,  \theta_{-\tau} \omega, u_{0,n}),
   v(\tau, \tau -t_n,  \theta_{-\tau}\omega, v_{0,n})  )\|_{H^1 \times L^2}
  \le \| (\ut, \vt) \|_{H^1 \times L^2},
  \ee
because  \eqref{asyv_p1}-\eqref{asyv_p3} yield
  the strong convergence of 
  $( u(\tau, \tau -t_n,  \theta_{-\tau} \omega, u_{0,n}),
   v(\tau, \tau -t_n,  \theta_{-\tau}\omega, v_{0,n})  )$
   in $\hone \times \ltwo$.
   Next, we prove  
  \eqref{asyv_p3}  by the energy equation
  \eqref{ene}.
  By Lemma \ref{lest}, there exist
  $\varepsilon_2 \in (0, \varepsilon_1)$
  and $N_1=N_1(\tau, \omega, D)\ge 1$
  such that for all 
   $\varepsilon \in (0, \varepsilon_2]$,
   $n\ge N_1$ and 
   $s\in [-t_n, 0]$, 
$$
  \|    u(\tau +s, \tau -t_n, \theta_{-\tau}\omega, u_{0,n}) \|^2_\hone
  + \|  v(\tau +s, \tau -t_n, \theta_{-\tau}\omega, v_{0,n} ) \|^2
  $$
  $$
  + \|  u(\tau +s, \tau -t_n, \theta_{-\tau}\omega, u_{0,n} ) \|^{\gamma +1}
  _{L^{\gamma +1} (\R^n)}
  $$
  \be\label{asyv_p6}
   \le C_1   + 
 e^{ \int_{s}^{ 0}(2\sigma -\varepsilon c
-\varepsilon c | y(\theta_{r} \omega)|^2)dr }
R(\tau, \omega).
\ee
  Let $m$ be a positive integer and $N_2=N_2(\tau, \omega, D, m)
  \ge N_1$ such that $t_n \ge m$ for all $n \ge N_2$.
  By \eqref{asyv_p6}  we have,  for $n\ge N_2$,
  $$
  \|    u(\tau -m, \tau -t_n, \theta_{-\tau}\omega, u_{0,n}) \|^2_\hone
  + \|  v(\tau -m, \tau -t_n, \theta_{-\tau}\omega, v_{0,n} ) \|^2
  $$
  $$
 +  \|    u(\tau -m, \tau -t_n, \theta_{-\tau}\omega, u_{0,n}) \|^{\gamma+1}
  _{L^{\gamma +1} (\R^n)}
  $$
 \be\label{asyv_p7}
   \le C_1   + 
 e^{ \int_{-m}^{ 0}(2\sigma -\varepsilon c
-\varepsilon c | y(\theta_{r} \omega)|^2)dr }
R(\tau, \omega).
\ee
Thus, for every fixed $m$, the sequence
  $ \{(u(\tau -m, \tau -t_n, \theta_{-\tau}\omega, u_{0,n}), 
     v(\tau -m, \tau -t_n, \theta_{-\tau}\omega, v_{0,n}  ))\}_{n=1}^\infty$
     is bounded in $\hone \times \ltwo$, and hence, by
     a diagonal 
  process we infer that    there exist
  a subsequence (not relabeled) and
   $ (\ut_m, \vt_m) \in \hone \times \ltwo$
   for every $m \in \N$ such that,  as $n \to \infty$,
  \be
  \label{asyv_p9}
  (u(\tau-m,  \tau-t_n,  \theta_{-\tau} \omega, u_{0,n}),
    v(\tau-m , \tau-t_n, \theta_{-\tau}  \omega, v_{0,n}) )
   \rightharpoonup 
   (\ut_m, \vt_m )
  \ee
  in $ \hone \times \ltwo$.
Note that for $s \in [-m,0]$,
$$
 (u(\tau +s,  \tau-t_n,  \theta_{-\tau} \omega, u_{0,n}),
    v(\tau+s , \tau-t_n, \theta_{-\tau}  \omega, v_{0,n}) )
$$ 
\be\label{asyv_p11}
  = (u(\tau+s,  \tau-m,  \theta_{-\tau}
   \omega, u(\tau-m,  \tau-t_n,  \theta_{-\tau} \omega, u_{0,n})),
     v(\tau+s,  \tau-m,  \theta_{-\tau} \omega, 
     v(\tau-m,  \tau-t_n,  \theta_{-\tau} \omega, v_{0,n}))).
 \ee
 By \eqref{asyv_p11} 
 and Lemma \ref{weakc}  we find that,  for every  $m\in \N$,
 there exist a subsequence 
 $(t_{n^\prime},  u_{0,n^\prime}  ,  v_{0,n^\prime} )$
and a solution  
 $
 ( u_m(\cdot,  \tau-m,  \theta_{-\tau} \omega, \ut_m  ),
  v_m(\cdot,  \tau-m,  \theta_{-\tau} \omega, \vt_m  ) )
  $  of    \eqref{pde1}-\eqref{pde3} 
   with initial condition $(\ut_m. \vt_m)$
  such that
 for all  $s\in [-m,0]$,  when
  $n^\prime  \to \infty$,
 $$
   u(\tau +s,  \tau-t_ {n^\prime},  \theta_{-\tau} \omega, u_{0, {n^\prime}} )
     \rightharpoonup 
      u_m (\tau+s,  \tau-m,  \theta_{-\tau} \omega, \ut_m  )
   \  \mbox{in} \  \hone ,
 $$
  and
  $$
   v(\tau+s,  \tau-t_ {n^\prime},  \theta_{-\tau} \omega, v_{0, {n^\prime}} )
     \rightharpoonup 
      v_m (\tau+s,  \tau-m,  \theta_{-\tau} \omega, \vt_m  )
   \  \mbox{in} \  \ltwo .
$$
  Note that this subsequence  
  $(t_{n^\prime},  u_{0,n^\prime}  ,  v_{0,n^\prime} )$
  may depend on $m$. However, by an appropriate diagonal process, we
  can choose a common subsequence
  (which is denoted  again by $(t_{n},  u_{0,n}  ,  v_{0,n})$) such that for
  all $m\in \N$ and   $s\in [-m,0]$,  when
  $n \to \infty$,
   \be
  \label{asyv_p13}
   u(\tau +s,  \tau-t_ {n},  \theta_{-\tau} \omega, u_{0, {n}} )
     \rightharpoonup 
      u_m (\tau+s,  \tau-m,  \theta_{-\tau} \omega, \ut_m  )
   \  \mbox{in} \  \hone ,
  \ee
  and
  \be
  \label{asyv_p15}
   v(\tau+s,  \tau-t_ {n},  \theta_{-\tau} \omega, v_{0, {n}} )
     \rightharpoonup 
      v_m (\tau+s,  \tau-m,  \theta_{-\tau} \omega, \vt_m  )
   \  \mbox{in} \  \ltwo .
  \ee
  As a result of \eqref{asyv_p1} and \eqref{asyv_p13}-\eqref{asyv_p15}
  with $s=0$
  we get
  \be
  \label{asyv_p17}
  \ut = u_m(\tau,  \tau-m,  \theta_{-\tau} \omega, \ut_m)
  \quad \mbox{and}
  \quad
 \vt = v_m(\tau,  \tau-m, \theta_{-\tau}  \omega, \vt_m).
 \ee
 For
 $(u(\tau,  \tau-m,  \theta_{-\tau} \omega, 
 u(\tau-m,  \tau-t_n,  \theta_{-\tau} \omega, u_{0,n})),
     v(\tau,  \tau-m,  \theta_{-\tau} \omega, 
     v(\tau-m,  \tau-t_n,  \theta_{-\tau} \omega, v_{0,n})))
 $ with
 initial condition
 $ (u(\tau-m,  \tau-t_n,  \theta_{-\tau} \omega, u_{0,n}),
 v(\tau-m,  \tau-t_n,  \theta_{-\tau} \omega, u_{0,n}))$,
 by \eqref{asyv_p11} and the  energy equation
 \eqref{ene}   we get
$$
 E ( u(\tau, \tau -t_n, \theta_{-\tau}
 \omega, u_{0,n} ), v(\tau, \tau -t_n, \theta_{-\tau}  \omega, v_{0,n} ))
 $$
$$
=
e^{\int_\tau^{\tau -m}
(4\sigma -2\varepsilon |y(\theta_{r-\tau} \omega )| ) dr }
 E(u(\tau-m, \tau-t_n,  \theta_{-\tau}\omega, u_{0,n} ),
 v(\tau-m,\tau-t_n, \theta_{-\tau}\omega, v_{0,n}))
  $$
 $$
  + 
  \int_{\tau -m}^\tau 
e^{\int_\tau^{s}
(4\sigma -2\varepsilon |y(\theta_{r-\tau} \omega )| ) dr }
      G( u(s,\tau-m, \theta_{-\tau}\omega, 
    u(\tau-m,\tau-t_n, \theta_{-\tau}\omega, u_{0,n})),
    v ) ds
  $$
  $$
=
e^{\int_0^{ -m}
(4\sigma -2\varepsilon |y(\theta_{r} \omega )| ) dr }
 E(u(\tau-m, \tau-t_n,  \theta_{-\tau}\omega, u_{0,n} ),
 v(\tau-m,\tau-t_n, \theta_{-\tau}\omega, v_{0,n}))
  $$
 $$
  + 
  \int_{ -m}^0 
e^{\int_0^{s}
(4\sigma -2\varepsilon |y(\theta_{r} \omega )| ) dr }
      G( u(s+\tau,\tau-t_n, \theta_{-\tau}\omega, u_{0,n} ),
      v(s+\tau,\tau-t_n, \theta_{-\tau}\omega, v_{0,n} ) ) ds
  $$
  which along  with \eqref{enerH}  produces
  $$
 E ( u(\tau, \tau -t_n, \theta_{-\tau}
 \omega, u_{0,n} ), v(\tau, \tau -t_n, \theta_{-\tau}  \omega, v_{0,n} ))
 $$
  $$
=
e^{\int_0^{ -m}
(4\sigma -2\varepsilon |y(\theta_{r} \omega )| ) dr }
 E(u(\tau-m, \tau-t_n,  \theta_{-\tau}\omega, u_{0,n} ),
 v(\tau-m,\tau-t_n, \theta_{-\tau}\omega, v_{0,n}))
  $$
  $$
    - 2  \int_{-m}^0 
e^{\int_0^{s}
(4\sigma -2\varepsilon |y(\theta_{r} \omega )| ) dr }
      \left (
    \alpha -\delta-2\sigma + \varepsilon (|y(\theta_{s} \omega)|
    + y(\theta_{s} \omega) )
    \right )  \| v(s+\tau, \tau-t_n, \theta_{-\tau}\omega, v_{0,n} ) \|^2 ds
    $$
    $$
    -2 (\lambda +\delta^2 -\alpha \delta)
     \int_{-m}^0 
e^{\int_0^{s}
(4\sigma -2\varepsilon |y(\theta_{r} \omega )| ) dr }
     (\delta -2\sigma  + \varepsilon (|y(\theta_s\omega)|
    - y(\theta_s \omega)  ) )
     \| u(s+\tau, \tau-t_n, \theta_{-\tau}\omega, u_{0,n} )  \|^2
    $$
    $$
    -2
     \int_{-m}^0 
e^{\int_0^{s}
(4\sigma -2\varepsilon |y(\theta_{r} \omega )| ) dr }
      (
    \delta -2\sigma  + \varepsilon (|y(\theta_s \omega)|
    - y(\theta_s \omega)  )) \| \nabla  
    u(s+\tau, \tau-t_n, \theta_{-\tau}\omega, u_{0,n} ) \|^2ds
    $$
    $$
    +2 
     \int_{-m}^0 
e^{\int_0^{s}
(4\sigma -2\varepsilon |y(\theta_{r} \omega )| ) dr }
    (g(s+\tau),  v(s+\tau, \tau-t_n, \theta_{-\tau}\omega, v_{0,n} ))ds
    $$
    $$
    -2\varepsilon
     \int_{-m}^0 
e^{\int_0^{s}
(4\sigma -2\varepsilon |y(\theta_{r} \omega )| ) dr }
       (\varepsilon y(\theta_s \omega) -2 \delta)
    y(\theta_s \omega) ( u(s+\tau, \tau-t_n, \theta_{-\tau}\omega, u_{0,n} ), v) ds
    $$
    $$
    +2
    \int_{-m}^0 
e^{\int_0^{s}
(4\sigma -2\varepsilon |y(\theta_{r} \omega )| ) dr }
      (\varepsilon y(\theta_s \omega) -\delta ) (f(x,u(s+\tau)), u(s+\tau))ds
    $$
    \be\label{asyv_p20}
    + 4 
    \int_{-m}^0 
e^{\int_0^{s}
(4\sigma -2\varepsilon |y(\theta_{r} \omega )| ) dr }
    (2\sigma -\varepsilon  |y(\theta_s \omega)| )
    \ii F(x, u(s+\tau, \tau-t_n, \theta_{-\tau}\omega, u_{0,n} )) dxds.
\ee
 By \eqref{utai_p11} there exists $M_1 =M_1(\omega)\ge 1$  such 
 that for all $m\ge M_1$
 $$
 \int_{-m}^0  |y(\theta_{r} \omega )|  dr \le {\frac 1\alpha} m,
 $$
 and thus for $\varepsilon \in (0, {\frac 12} \alpha \sigma)$ we get
 \be\label{asyv_p22}
 \int^{-m}_0 (4\sigma -2\varepsilon |y(\theta_{r} \omega )| ) dr
 \le  -3\sigma m.
 \ee
 By \eqref{enerE} and \eqref{asyv_p22} we have
 for $m\ge M_1$   and $t_n \ge m$,
$$
 e^{\int_0^{ -m}
(4\sigma -2\varepsilon |y(\theta_{r} \omega )| ) dr }
 E(u(\tau-m, \tau-t_n,  \theta_{-\tau}\omega, u_{0,n} ),
 v(\tau-m,\tau-t_n, \theta_{-\tau}\omega, v_{0,n}))
 $$
 $$
 \le
  e^{-3 \sigma m}
   \left (\| v(\tau-m, \tau-t_n,  \theta_{-\tau}\omega, v_{0,n} )  \|^2
  + (\lambda + \delta^2 -\alpha \delta)
  \|  u(\tau-m, \tau-t_n,  \theta_{-\tau}\omega, u_{0,n} ) \|^2
  \right )
  $$
  $$
  +  e^{-3 \sigma m}  
  \|\nabla  u(\tau-m, \tau-t_n,  \theta_{-\tau}\omega, u_{0,n} )   \|^2
  $$
 \be\label{asyv_p24}
  +
  2 e^{\int_0^{ -m}
(4\sigma -2\varepsilon |y(\theta_{r} \omega )| ) dr }
  \int_{\R^3} F(x,   u(\tau-m, \tau-t_n,  \theta_{-\tau}\omega, u_{0,n} ) ) dx.
 \ee
 By \eqref{f1}-\eqref{f2} and \eqref{asyv_p7} we get
 for all  $n \ge N_2$,
  $$
    \ii F(x,   u(\tau-m, \tau-t_n,  \theta_{-\tau}\omega, u_{0,n} )   ) dx
   $$
   $$
   \le  C_3 
   \left (
    \|  u(\tau-m, \tau-t_n,  \theta_{-\tau}\omega, u_{0,n} )  \|^2
    + \|  u(\tau-m, \tau-t_n,  \theta_{-\tau}\omega, u_{0,n} )\|^{\gamma +1}_{L^{\gamma +1}}
    +1
   \right )
   $$ 
 \be\label{asyv_p25}
   \le C_4   + 
 C_4 e^{ \int_{-m}^{ 0}(2\sigma -\varepsilon c
-\varepsilon c | y(\theta_{r} \omega)|^2)dr }
 \le C_4   + 
 C_4 e^{ 2\sigma m }. 
\ee
 By \eqref{asyv_p7} and \eqref{asyv_p22}-\eqref{asyv_p25}
  we get
 for all $m \ge M_1$  and $n \ge N_2$,  
    $$
 e^{\int_0^{ -m}
(4\sigma -2\varepsilon |y(\theta_{r} \omega )| ) dr }
 E(u(\tau-m, \tau-t_n,  \theta_{-\tau}\omega, u_{0,n} ),
 v(\tau-m,\tau-t_n, \theta_{-\tau}\omega, v_{0,n}))
 $$
 \be\label{asyv_p27}
\le
 C_5 e^{-3 \sigma m} (1+  e^{2 \sigma m}
 )
 \le 
 2C_5 e^{-  \sigma m}.
 \ee
   Note that
   $  \alpha -\delta-2\sigma + \varepsilon (|y(\theta_{s} \omega)|
    + y(\theta_{s} \omega)  $
    is nonnegative for  all $s\in \R$ by
    \eqref{delta} and \eqref{kappa}.
    Therefore, by \eqref{asyv_p15} and Fatou\rq{}s Theorem we obtain
    the following inequality  for the second term on the right-hand side
    of \eqref{asyv_p20}:
     $$
   \liminf_{n\to \infty}
     \int_{-m}^0 
e^{\int_0^{s}
(4\sigma -2\varepsilon |y(\theta_{r} \omega )| ) dr }
      \left (
    \alpha -\delta-2\sigma + \varepsilon (|y(\theta_{s} \omega)|
    + y(\theta_{s} \omega) )
    \right )  \| v(s+\tau, \tau-t_n, \theta_{-\tau}\omega, v_{0,n} ) \|^2 ds
    $$
    $$
    \ge 
     \int_{-m}^0 
e^{\int_0^{s}
(4\sigma -2\varepsilon |y(\theta_{r} \omega )| ) dr }
      \left (
    \alpha -\delta-2\sigma + \varepsilon (|y(\theta_{s} \omega)|
    + y(\theta_{s} \omega) )
    \right ) 
   \liminf_{n\to \infty}
    \| v(s+\tau, \tau-t_n, \theta_{-\tau}\omega, v_{0,n} ) \|^2 ds
    $$
    \be\label{asyv_p30}
    \ge 
     \int_{-m}^0 
e^{\int_0^{s}
(4\sigma -2\varepsilon |y(\theta_{r} \omega )| ) dr }
      \left (
    \alpha -\delta-2\sigma + \varepsilon (|y(\theta_{s} \omega)|
    + y(\theta_{s} \omega) )
    \right )  
    \| v_m (s+\tau, \tau- m, \theta_{-\tau}\omega, \vt_m ) \|^2 ds.
  \ee
   Similarly, we can also prove
    $$
   \liminf_{n\to \infty}
     \int_{-m}^0 
e^{\int_0^{s}
(4\sigma -2\varepsilon |y(\theta_{r} \omega )| ) dr }
     (\delta -2\sigma  + \varepsilon (|y(\theta_s\omega)|
    - y(\theta_s \omega)  ) )
     \| u(s+\tau, \tau-t_n, \theta_{-\tau}\omega, u_{0,n} )  \|^2
    $$
    \be\label{asyv_p31}
    \ge
      \int_{-m}^0 
e^{\int_0^{s}
(4\sigma -2\varepsilon |y(\theta_{r} \omega )| ) dr }
     (\delta -2\sigma  + \varepsilon (|y(\theta_s\omega)|
    - y(\theta_s \omega)  ) )
     \| u_m (s+\tau, \tau-m, \theta_{-\tau}\omega, \ut_m )  \|^2
   \ee
   and
    $$
    \liminf_{n\to \infty}
      \int_{-m}^0 
e^{\int_0^{s}
(4\sigma -2\varepsilon |y(\theta_{r} \omega )| ) dr }
      (
    \delta -2\sigma  + \varepsilon (|y(\theta_s \omega)|
    - y(\theta_s \omega)  )) \| \nabla  
    u(s+\tau, \tau-t_n, \theta_{-\tau}\omega, u_{0,n} ) \|^2 
    $$
    \be\label{asyv_p32}
    \ge
       \int_{-m}^0 
e^{\int_0^{s}
(4\sigma -2\varepsilon |y(\theta_{r} \omega )| ) dr }
      (
    \delta -2\sigma  + \varepsilon (|y(\theta_s \omega)|
    - y(\theta_s \omega)  )) \| \nabla  
    u_m (s+\tau, \tau-m, \theta_{-\tau}\omega, \ut_m ) \|^2.
    \ee
   We now deal with the nonlinear terms
   in \eqref{asyv_p20},  for which we first prove the
   following convergence in $\ltwo$:  for every $s\in [-m, 0]$,
  \be\label{asyv_p34}
  \lim_{n\to \infty}
  u(\tau+s, \tau-t_n, \theta_{-\tau}\omega, u_{0,n} )
  =  u _m (\tau+s, \tau -m, \theta_{-\tau}\omega, \ut_m )
  \ \mbox{in} \ \ltwo.
   \ee
   Given $\eta>0$, by Lemma \ref{utai} we find that
   there exist $\varepsilon_3 \in (0, \varepsilon_2)$,
   $N_3=N_3(\tau, \omega, D, \eta,m)\ge 1$ and $K
   =K(\tau, \omega, \eta,m) \ge 1$ such that
   for all $\varepsilon\in (0, \varepsilon_3)$,
   $s\in [-m, 0]$  and $n\ge N_3$,
  \be\label{asyv_p35}
   \int_{|x| \ge K}
   | u(s+\tau, \tau-t_n, \theta_{-\tau}\omega, u_{0,n} ) | ^2dx
   \le  \eta.
   \ee
  Thus  \eqref{asyv_p34} follows from 
     \eqref{asyv_p13} and
   the compact   embedding $H^1  \hookrightarrow
   L^2$ in bounded domains.
   By \eqref{f1}-\eqref{f2},
   \eqref{asyv_p6}, \eqref{asyv_p34}
   and the arguments of \eqref{cprv_p62}
   and \eqref{cprv_p68} we obtain
    \be\label{asyv_p37}
   \lim_{n\to \infty}
    \int_{-m}^0 
e^{\int_0^{s}
(4\sigma -2\varepsilon |y(\theta_{r} \omega )| ) dr }
    (2\sigma -\varepsilon  |y(\theta_s \omega)| )
    \ii F(x, u(s+\tau, \tau-t_n, \theta_{-\tau}\omega, u_{0,n} )) dxds
    $$
    $$
    = 
       \int_{-m}^0 
e^{\int_0^{s}
(4\sigma -2\varepsilon |y(\theta_{r} \omega )| ) dr }
    (2\sigma -\varepsilon  |y(\theta_s \omega)| )
    \ii F(x, u_m(s+\tau, \tau-m, \theta_{-\tau}\omega, \ut_m )) dxds
\ee
   and 
   \be\label{asyv_p38}
   \lim_{n\to \infty}
    \int_{-m}^0 
e^{\int_0^{s}
(4\sigma -2\varepsilon |y(\theta_{r} \omega )| ) dr }
      (\varepsilon y(\theta_s \omega) -\delta ) (f(x,u(s+\tau)), u(s+\tau,
       \tau-t_n, \theta_{-\tau}\omega, u_{0,n} ))ds
    $$
    $$
      =
      \int_{-m}^0 
e^{\int_0^{s}
(4\sigma -2\varepsilon |y(\theta_{r} \omega )| ) dr }
      (\varepsilon y(\theta_s \omega) -\delta ) (f(x,u_m(s+\tau)), u_m(s+\tau,
       \tau-m, \theta_{-\tau}\omega, \ut_m ))ds.
      \ee
      By  \eqref{asyv_p6}, \eqref{asyv_p34}
      and the Lebesgue dominated convergence
      theorem, we also obtain    
     $$ 
    \lim_{n\to \infty}
     \int_{-m}^0 
e^{\int_0^{s}
(4\sigma -2\varepsilon |y(\theta_{r} \omega )| ) dr }
    (g(s+\tau),  v(s+\tau, \tau-t_n, \theta_{-\tau}\omega, v_{0,n} ))ds
    $$
     \be\label{asyv_p40}
     = \int_{-m}^0 
e^{\int_0^{s}
(4\sigma -2\varepsilon |y(\theta_{r} \omega )| ) dr }
    (g(s+\tau),  v_m(s+\tau, \tau-m, \theta_{-\tau}\omega, \vt_m ))ds
 \ee
 and
    $$
  \lim_{n\to \infty}
     \int_{-m}^0 
e^{\int_0^{s}
(4\sigma -2\varepsilon |y(\theta_{r} \omega )| ) dr }
       (\varepsilon y(\theta_s \omega) -2 \delta)
    y(\theta_s \omega) ( u(s+\tau, \tau-t_n, \theta_{-\tau}\omega, u_{0,n} ), v) ds
    $$  
    \be\label{asyv_p41}
    =   \int_{-m}^0 
e^{\int_0^{s}
(4\sigma -2\varepsilon |y(\theta_{r} \omega )| ) dr }
       (\varepsilon y(\theta_s \omega) -2 \delta)
    y(\theta_s \omega) ( u_m(s+\tau, \tau-m, \theta_{-\tau}\omega, \ut_m, v_m) ds.
    \ee
    It follows from \eqref{asyv_p20}, \eqref{asyv_p30}-\eqref{asyv_p32}
    and \eqref{asyv_p37}-\eqref{asyv_p41}
    that for all $m \ge M_1$,
      $$
\limsup_{n\to \infty}
 E ( u(\tau, \tau -t_n, \theta_{-\tau}
 \omega, u_{0,n} ), v(\tau, \tau -t_n, \theta_{-\tau}  \omega, v_{0,n} ))
 $$
  $$
\le 2C_2 
e^{-\sigma m}  
    - 2  \int_{-m}^0 
e^{\int_0^{s}
(4\sigma -2\varepsilon |y(\theta_{r} \omega )| ) dr }
      \left (
    \alpha -\delta-2\sigma + \varepsilon (|y(\theta_{s} \omega)|
    + y(\theta_{s} \omega) )
    \right )  \| v_m(s+\tau, \tau-m, \theta_{-\tau}\omega, \vt_m  ) \|^2 ds
    $$
    $$
    -2 (\lambda +\delta^2 -\alpha \delta)
     \int_{-m}^0 
e^{\int_0^{s}
(4\sigma -2\varepsilon |y(\theta_{r} \omega )| ) dr }
     (\delta -2\sigma  + \varepsilon (|y(\theta_s\omega)|
    - y(\theta_s \omega)  ) )
     \| u_m(s+\tau, \tau-m, \theta_{-\tau}\omega, \ut_m )  \|^2
    $$
    $$
    -2
     \int_{-m}^0 
e^{\int_0^{s}
(4\sigma -2\varepsilon |y(\theta_{r} \omega )| ) dr }
      (
    \delta -2\sigma  + \varepsilon (|y(\theta_s \omega)|
    - y(\theta_s \omega)  )) \| \nabla  
    u_m(s+\tau, \tau-m, \theta_{-\tau}\omega,\ut_m ) \|^2ds
    $$
    $$
    +2 
     \int_{-m}^0 
e^{\int_0^{s}
(4\sigma -2\varepsilon |y(\theta_{r} \omega )| ) dr }
    (g(s+\tau),  v_m(s+\tau, \tau-m, \theta_{-\tau}\omega, \vt_m ))ds
    $$
    $$
    -2\varepsilon
     \int_{-m}^0 
e^{\int_0^{s}
(4\sigma -2\varepsilon |y(\theta_{r} \omega )| ) dr }
       (\varepsilon y(\theta_s \omega) -2 \delta)
    y(\theta_s \omega) ( u_m(s+\tau, \tau-m, \theta_{-\tau}\omega, \ut_m ), v_m) ds
    $$
    $$
    +2
    \int_{-m}^0 
e^{\int_0^{s}
(4\sigma -2\varepsilon |y(\theta_{r} \omega )| ) dr }
      (\varepsilon y(\theta_s \omega) -\delta ) (f(x,u_m(s+\tau)),
       u_m(s+\tau,\tau-m, \theta_{-\tau}\omega,
      \ut_m  ) )ds
    $$
    \be\label{asyv_p44}
    + 4 
    \int_{-m}^0 
e^{\int_0^{s}
(4\sigma -2\varepsilon |y(\theta_{r} \omega )| ) dr }
    (2\sigma -\varepsilon  |y(\theta_s \omega)| )
    \ii F(x, u_m(s+\tau, \tau-m, \theta_{-\tau}\omega,\ut_m )) dxds.
\ee
   On the other hand,    by \eqref{ene} and \eqref{asyv_p17}
   we have
      $$ 
      E(\ut, \vt) = 
 E ( u_m(\tau, \tau -m, \theta_{-\tau}
 \omega, \ut_m  ), v_m(\tau, \tau -m, \theta_{-\tau}  \omega, \vt_m ))
 $$
  $$
= e^{  \int_0^{-m} 
(4\sigma -2\varepsilon |y(\theta_{r} \omega )| ) dr } E(\ut_m, \vt_m)
$$
$$
   - 2  \int_{-m}^0 
e^{\int_0^{s}
(4\sigma -2\varepsilon |y(\theta_{r} \omega )| ) dr }
      \left (
    \alpha -\delta-2\sigma + \varepsilon (|y(\theta_{s} \omega)|
    + y(\theta_{s} \omega) )
    \right )  \| v_m(s+\tau, \tau-m, \theta_{-\tau}\omega, \vt_m  ) \|^2 ds
    $$
    $$
    -2 (\lambda +\delta^2 -\alpha \delta)
     \int_{-m}^0 
e^{\int_0^{s}
(4\sigma -2\varepsilon |y(\theta_{r} \omega )| ) dr }
     (\delta -2\sigma  + \varepsilon (|y(\theta_s\omega)|
    - y(\theta_s \omega)  ) )
     \| u_m(s+\tau, \tau-m, \theta_{-\tau}\omega, \ut_m )  \|^2
    $$
    $$
    -2
     \int_{-m}^0 
e^{\int_0^{s}
(4\sigma -2\varepsilon |y(\theta_{r} \omega )| ) dr }
      (
    \delta -2\sigma  + \varepsilon (|y(\theta_s \omega)|
    - y(\theta_s \omega)  )) \| \nabla  
    u_m(s+\tau, \tau-m, \theta_{-\tau}\omega,\ut_m ) \|^2ds
    $$
    $$
    +2 
     \int_{-m}^0 
e^{\int_0^{s}
(4\sigma -2\varepsilon |y(\theta_{r} \omega )| ) dr }
    (g(s+\tau),  v_m(s+\tau, \tau-m, \theta_{-\tau}\omega, \vt_m ))ds
    $$
    $$
    -2\varepsilon
     \int_{-m}^0 
e^{\int_0^{s}
(4\sigma -2\varepsilon |y(\theta_{r} \omega )| ) dr }
       (\varepsilon y(\theta_s \omega) -2 \delta)
    y(\theta_s \omega) ( u_m(s+\tau, \tau-m, \theta_{-\tau}\omega, \ut_m ), v_m) ds
    $$
    $$
    +2
    \int_{-m}^0 
e^{\int_0^{s}
(4\sigma -2\varepsilon |y(\theta_{r} \omega )| ) dr }
      (\varepsilon y(\theta_s \omega) -\delta ) (f(x,u_m(s+\tau)),
       u_m(s+\tau,\tau-m, \theta_{-\tau}\omega,
      \ut_m  ) )ds
    $$
    \be\label{asyv_p46}
    + 4 
    \int_{-m}^0 
e^{\int_0^{s}
(4\sigma -2\varepsilon |y(\theta_{r} \omega )| ) dr }
    (2\sigma -\varepsilon  |y(\theta_s \omega)| )
    \ii F(x, u_m(s+\tau, \tau-m, \theta_{-\tau}\omega,\ut_m )) dxds.
\ee
By \eqref{asyv_p44}-\eqref{asyv_p46},
\eqref{enerE}, \eqref{f3} and \eqref{asyv_p22}  we get
  for all $m \ge M_1$,
      $$
\limsup_{n\to \infty}
 E ( u(\tau, \tau -t_n, \theta_{-\tau}
 \omega, u_{0,n} ), v(\tau, \tau -t_n, \theta_{-\tau}  \omega, v_{0,n} ))
 $$
 \be\label{asyv_p50}
\le 2C_2 
e^{-\sigma m}   
-    e^{  \int_0^{-m} 
(4\sigma -2\varepsilon |y(\theta_{r} \omega )| ) dr } E(\ut_m, \vt_m)
     +  E(\ut, \vt)  
     $$
     $$
     \le 2C_2 
e^{-\sigma m}   
-   2 e^{  \int_0^{-m} 
(4\sigma -2\varepsilon |y(\theta_{r} \omega )| ) dr }
\ii F(x, \ut_m) dx   
     +  E(\ut, \vt)  
     $$
     $$
      \le 2C_2 
e^{-\sigma m}   
+  2 e^{  \int_0^{-m} 
(4\sigma -2\varepsilon |y(\theta_{r} \omega )| ) dr }
\| \phi_3\|_{L^1(\R^n)}  
     +  E(\ut, \vt)  
      $$
     $$
      \le 2C_2 
e^{-\sigma m}   
+  2 e^{   -3\sigma m} 
\| \phi_3\|_{L^1(\R^n)}  
     +  E(\ut, \vt)  .
 \ee
  By \eqref{asyv_p34} with $s=0$ and the arguments
  of \eqref{cprv_p58} we infer that   
  $$
  \lim_{n\to \infty}
  \ii F(x,  u(\tau, \tau -t_n, \theta_{-\tau}
 \omega, u_{0,n} )) dx
 = \ii F(x, u_m(\tau, \tau -m, \theta_{-\tau}
 \omega, \ut_m )dx
$$
 which together with   \eqref{asyv_p17}   yields
 \be\label{asyv_p52}
  \lim_{n\to \infty}
  \ii F(x,  u(\tau, \tau -t_n, \theta_{-\tau}
 \omega, u_{0,n} )) dx
  =\ii F(x, \ut)dx.
 \ee
 By \eqref{enerE} and \eqref{asyv_p50}-\eqref{asyv_p52} we get 
  $$
   \limsup_{n\to \infty}
  ( \|v(\tau, \tau -t_n, \theta_{-\tau}  \omega, v_{0,n} )\|^2
     + (\lambda + \delta^2 -\alpha \delta) 
     \| u(\tau, \tau -t_n, \theta_{-\tau}  \omega, u_{0,n} )\|^2
    +\| \nabla u  (\tau, \tau -t_n, \theta_{-\tau}  \omega,  _{0,n} ) \|^2 )
     $$
     \be\label{asyv_p60}
     \le 
      \| \vt \|^2
     + (\lambda + \delta^2 -\alpha \delta) 
     \|  \ut \|^2
    +\| \nabla \ut  \|^2 
    + 2C_2 
e^{-\sigma m}   
+  2 e^{   -3\sigma m} 
\| \phi_3\|_{L^1(\R^n)}  .
   \ee
   Taking the limit of \eqref{asyv_p60} as 
   $m \to \infty$ we get
   $$
   \limsup_{n\to \infty}
  ( \|v(\tau, \tau -t_n, \theta_{-\tau}  \omega, v_{0,n} )\|^2
     + (\lambda + \delta^2 -\alpha \delta) 
     \| u(\tau, \tau -t_n, \theta_{-\tau}  \omega, u_{0,n} )\|^2
    +\| \nabla u  (\tau, \tau -t_n, \theta_{-\tau}  \omega,  _{0,n} ) \|^2 )
     $$
     \be\label{asyv_p62}
     \le 
      \| \vt \|^2
     + (\lambda + \delta^2 -\alpha \delta) 
     \|  \ut \|^2
    +\| \nabla \ut  \|^2   .
   \ee  Note that
    $\left ( \| v \|^2
     + (\lambda + \delta^2 -\alpha \delta) 
     \|  u \|^2
    +\| \nabla u  \|^2
    \right )^{\frac 12} $
    is an equivalent norm  for $(u,v) \in \hone \times \ltwo$.
    Therefore,
    from \eqref{asyv_p62} we obtain
  $$
     \limsup_{n\to \infty}
  (  \| u(\tau, \tau -t_n, \theta_{-\tau}  \omega, u_{0,n} )\|^2_{H^1}
  +
  \|v(\tau, \tau -t_n, \theta_{-\tau}  \omega, v_{0,n} )\|^2  )
     \le 
    \| \ut\|_{H^1}^2 +   \| \vt \|^2.
   $$     
     This  yields \eqref{asyv_p3} and thus completes the proof.
    \end{proof}

\section{Random attractors for wave  equations}
\setcounter{equation}{0}

This  section is devoted to  existence
and uniqueness  of 
$\cald$-pullback attractors
for   system
\eqref{spde1}-\eqref{spde3}.
We first recall the existence of $\cald$-pullback absorbing sets
from \cite{wan9}.

\begin{lem}
\label{lem51}
Under conditions 
 \eqref{f1}-\eqref{f3}  and \eqref{g1},
   there exists  
    $\varepsilon_0 = \varepsilon_0 (\alpha, \lambda, f)\in
    (0, 1)$
  such that 
 for all  $\varepsilon \in (0, \varepsilon_0]$, 
 the multivalued non-autonomous 
  cocycle $\Phi$ has a closed  
 $\cald$-pullback absorbing set
 $K  \in \cald$:
 \be\label{lem51_1}
 K (\tau, \omega)
 =\{ (u,z) \in \hone \times \ltwo:
 \|u\|_\hone^2 + \| z\|^2 \le L(\tau, \omega) \}
 \ee
 for   $\tau \in \R$
 and $\omega\in \Omega$,
where  $L(\tau, \omega)$ is given by 
\be\label{lem51_2}
L(\tau, \omega)
 =   
 M(1+ \varepsilon y^2 (\omega) )
 \left (1 + 
    \int^0_{ - \infty} 
   e^{ \int_0^{s}(2\sigma -\varepsilon c
-\varepsilon c | y(\theta_{r} \omega)|^2)dr }
\left (
 1+ \| g(s+\tau,  \cdot) \|^2 + \varepsilon|y(\theta_{s} \omega)|^2 
\right ) ds \right )
\ee
with    $M$ and $c$  being 
   positive     constants  independent of $\tau$, $\omega$, $D$ 
  and $\varepsilon$.
  \end{lem}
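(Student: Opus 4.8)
The plan is to verify in turn the three requirements in the definition of a closed $\cald$-pullback absorbing set: that each $K(\tau,\omega)$ is closed, that $K$ absorbs every member of $\cald$, and that $K$ itself belongs to $\cald$. The closedness is immediate, since by \eqref{lem51_1} the set $K(\tau,\omega)$ is a closed ball in $\hone\times\ltwo$. All of the real content sits in the absorption property and in the temperedness needed for $K\in\cald$, and both are obtained from the uniform estimate of Lemma \ref{lest}. I would take $\varepsilon_0$ to be the $\varepsilon_0$ furnished by Lemma \ref{lest}, possibly shrunk further so that the exponential weights below decay at the right rate.

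For the absorption property, I would fix $\tau\in\R$, $\omega\in\Omega$ and $D\in\cald$ and pick an arbitrary element of $\Phi(t,\tau-t,\theta_{-t}\omega,D(\tau-t,\theta_{-t}\omega))$. By the shift formula \eqref{shift} it has the form $(u(\tau,\tau-t,\theta_{-\tau}\omega,u_0),z(\tau,\tau-t,\theta_{-\tau}\omega,z_0))$ for some solution $(u,z)$ and some $(u_0,z_0)\in D(\tau-t,\theta_{-t}\omega)$, and by \eqref{tranz} its second component equals $v+\varepsilon y(\omega)u$ with $v_0=z_0-\varepsilon y(\omega)u_0$. Applying Lemma \ref{lest} with $s=0$ to the pair $(u,v)$ gives, for all $t\ge T(\tau,\omega,D)$, a bound $\|u(\tau,\cdot)\|_\hone^2+\|v(\tau,\cdot)\|^2\le M+R(\tau,\omega)$ which is independent of $t$ and of the particular solution. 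Combining this with $\|z\|^2\le 2\|v\|^2+2\varepsilon^2 y^2(\omega)\|u\|_\hone^2$ and with $\varepsilon\le 1$ yields $\|u(\tau,\cdot)\|_\hone^2+\|z(\tau,\cdot)\|^2\le L(\tau,\omega)$, once $R(\tau,\omega)$ is identified (by tracing the proof of Lemma \ref{lest}, as in \cite{wan9}) with the integral appearing in \eqref{lem51_2} and absolute constants are absorbed into $M$. Hence every such element lies in $K(\tau,\omega)$, which is the absorption property.

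To establish $K\in\cald$ I would observe that, $K(\tau,\omega)$ being the ball of radius $L(\tau,\omega)^{1/2}$, one has $\|K(\tau,\omega)\|^{\gamma+1}=L(\tau,\omega)^{(\gamma+1)/2}$, so the defining condition of $\cald$ reduces to $\lim_{s\to\infty}e^{-\frac{2\sigma}{\gamma+1}s}L(\tau-s,\theta_{-s}\omega)=0$. Substituting $(\tau-s,\theta_{-s}\omega)$ into \eqref{lem51_2} and using $\theta_r\theta_{-s}\omega=\theta_{r-s}\omega$, the prefactor $1+\varepsilon y^2(\theta_{-s}\omega)$ is controlled by the temperedness of $y$, so that $e^{-\frac{2\sigma}{\gamma+1}s}(1+\varepsilon y^2(\theta_{-s}\omega))\to 0$; the integral term is treated by combining the decay of the weight $e^{\int_0^{s'}(2\sigma-\varepsilon c-\varepsilon c|y(\theta_{r-s}\omega)|^2)\,dr}$ with the growth assumption \eqref{g1} on $g$ and, once more, the temperedness of $y$.

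I expect the temperedness verification of this integral to be the main obstacle. One must choose $\varepsilon_0$ small enough that the ergodic average $\frac1t\int_{-t}^0|y(\theta_r\omega)|^2\,dr\to\frac1{2\alpha}$, used as in \eqref{utai_p11}, forces the exponential weight to decay fast enough that after multiplication by $e^{-\frac{2\sigma}{\gamma+1}s}$ and the induced shift in the argument of $g$ the entire expression tends to $0$; the competition between the $\cald$-rate $e^{-\frac{2\sigma}{\gamma+1}s}$ and the subexponential growth of the shifted integral is exactly where the smallness of $\varepsilon$ and condition \eqref{g1} are needed. Since these are precisely the estimates already carried out in \cite{wan9}, they may be imported with only the notational change from $(u,v)$ to $(u,z)$ recorded above.
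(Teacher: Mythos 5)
Your proposal is correct and takes essentially the same route as the paper: the paper's own proof is a one-line citation of the energy equation \eqref{ener1} and \cite{wan9}, and your argument simply organizes that material through Lemma \ref{lest} (itself proved in the paper by the same citation), using $s=0$ for the absorption bound and the ergodicity/temperedness estimates of \cite{wan9} to verify $K\in\cald$. The two points requiring care --- identifying $R(\tau,\omega)$ with the integral in \eqref{lem51_2} up to constants absorbed into $M$, and converting the $(u,v)$ estimate into a $(u,z)$ estimate via $z=v+\varepsilon y(\omega)u$ with $\varepsilon\le 1$ --- are both handled correctly in your write-up.
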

  
  \begin{proof}
  This lemma can be derived from \eqref{ener1} and 
  the details can be found in  \cite{wan9}.
  \end{proof}

\begin{lem}
\label{lem52}
Suppose 
 \eqref{f1}-\eqref{f3} and \eqref{g1}
  hold and $K$ is the closed
 $\cald$-pullback absorbing set given by
 \eqref{lem51_1}-\eqref{lem51_2}.
 Then
  there exists  
    $\varepsilon_0 = \varepsilon_0 (\alpha, \lambda, f)\in
    (0, 1)$
  such that 
 for all  $\varepsilon \in (0, \varepsilon_0]$, 
    the mapping
 $\Phi (t, \tau, \cdot, K(\tau, \cdot)):
 \Omega_m \to 2^{H^1\times L^2}$
 is weakly upper semicontinuous for all
 $m \in \N$, $t\in \R^+$  and $\tau \in \R$.
 \end{lem}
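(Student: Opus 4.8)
The plan is to verify the defining condition of weak upper semicontinuity (Definition \ref{wuc}) head-on. Fix $m \in \N$, $t \in \R^+$ and $\tau \in \R$, let $\omega_n \to \omega$ in $\Omega_m$, and take an arbitrary $(\ut_n, \widetilde{z}_n) \in \Phi(t, \tau, \omega_n, K(\tau, \omega_n))$. By \eqref{rdsw1} there exist initial data $(u_{0,n}, z_{0,n}) \in K(\tau, \omega_n)$ and solutions of \eqref{spde1}-\eqref{spde3} with $(\ut_n, \widetilde{z}_n) = (u(t+\tau, \tau, \theta_{-\tau}\omega_n, u_{0,n}),\, z(t+\tau, \tau, \theta_{-\tau}\omega_n, z_{0,n}))$. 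I must exhibit a point of $\Phi(t, \tau, \omega, K(\tau, \omega))$ to which a subsequence of $(\ut_n, \widetilde{z}_n)$ converges weakly in $\hone \times \ltwo$.

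First I would bound the initial data. By \eqref{lem51_1} we have $\|u_{0,n}\|^2_{\hone} + \|z_{0,n}\|^2 \le L(\tau, \omega_n)$, so it suffices to control $L(\tau, \omega_n)$ along the sequence. Using Lemma \ref{omegam}(i) (which gives $y(\theta_r \omega_n) \to y(\theta_r \omega)$ uniformly on compact $r$-intervals, and in particular $y(\omega_n) \to y(\omega)$) together with the integral bound defining $\Omega_m$, I would show by dominated convergence, exactly as in the derivation of Lemma \ref{lem51}, that the integrand in \eqref{lem51_2} admits a uniform integrable majorant on $(-\infty,0]$ and that $L(\tau, \omega_n) \to L(\tau, \omega)$; i.e.\ $L(\tau, \cdot)$ is continuous on $\Omega_m$. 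Consequently $\sup_n L(\tau, \omega_n) < \infty$, so $\{(u_{0,n}, z_{0,n})\}$ is bounded and, along a subsequence, $(u_{0,n}, z_{0,n}) \rightharpoonup (u_0, z_0)$ weakly in $\hone \times \ltwo$. Since $K(\tau,\omega)$ is a closed ball, weak lower semicontinuity of the norm yields $\|u_0\|^2_{\hone} + \|z_0\|^2 \le \liminf_n(\|u_{0,n}\|^2_{\hone} + \|z_{0,n}\|^2) \le \liminf_n L(\tau, \omega_n) = L(\tau, \omega)$, so $(u_0, z_0) \in K(\tau, \omega)$.

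Next I would transfer weak convergence from the data to the solutions via Lemma \ref{cprv}(i). Setting $v_{0,n} = z_{0,n} - \varepsilon y(\omega_n) u_{0,n}$ and $v_0 = z_0 - \varepsilon y(\omega) u_0$ as in \eqref{rdsw2}, the scalar convergence $y(\omega_n) \to y(\omega)$ and the weak convergence $u_{0,n} \rightharpoonup u_0$ in $\hone$ give $(u_{0,n}, v_{0,n}) \rightharpoonup (u_0, v_0)$ in $\hone \times \ltwo$. Applying Lemma \ref{cprv}(i) with the constant times $t_n \equiv t+\tau$ and base points $\theta_{-\tau}\omega_n \to \theta_{-\tau}\omega$ then produces, along a further subsequence, a solution with initial datum $(u_0,v_0)$ such that $u(t+\tau, \tau, \theta_{-\tau}\omega_n, u_{0,n}) \rightharpoonup u(t+\tau, \tau, \theta_{-\tau}\omega, u_0)$ in $\hone$ and $v(t+\tau, \tau, \theta_{-\tau}\omega_n, v_{0,n}) \rightharpoonup v(t+\tau, \tau, \theta_{-\tau}\omega, v_0)$ in $\ltwo$. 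Recombining through \eqref{rdsw2} and using $y(\theta_t \omega_n) \to y(\theta_t \omega)$ gives $\widetilde{z}_n \rightharpoonup z(t+\tau, \tau, \theta_{-\tau}\omega, z_0)$ in $\ltwo$; hence $(\ut_n, \widetilde{z}_n) \rightharpoonup (u(t+\tau, \tau, \theta_{-\tau}\omega, u_0),\, z(t+\tau, \tau, \theta_{-\tau}\omega, z_0)) \in \Phi(t, \tau, \omega, K(\tau, \omega))$, which is exactly what Definition \ref{wuc} requires.

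The main obstacle is the invocation of Lemma \ref{cprv}(i) at the \emph{shifted} base points $\theta_{-\tau}\omega_n$: although $\omega_n, \omega \in \Omega_m$, the translates $\theta_{-\tau}\omega_n$ need not all lie in a single $\Omega_{m'}$, since the bounds defining $\Omega_m$ are preserved under $\theta_{-\tau}$ only up to an additive constant arising from $|\omega_n(-\tau)|$ and $|\tau|$. I would resolve this by observing that the proofs of Lemma \ref{omegam}(i), Lemma \ref{estf} and Lemma \ref{tai1}, on which Lemma \ref{cprv}(i) rests, use the $\Omega_m$-bounds only to produce integrable tail majorants and the uniform convergence of $y(\theta_\cdot \omega_n)$, both of which are unaffected by such harmless additive perturbations; thus Lemma \ref{cprv}(i) carries over verbatim to $\theta_{-\tau}\omega_n \to \theta_{-\tau}\omega$. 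The remaining technical point, the continuity of $L(\tau, \cdot)$ on $\Omega_m$ used to place $(u_0,z_0)$ in $K(\tau,\omega)$, is settled by the dominated-convergence estimate indicated above.
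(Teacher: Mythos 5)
Your proposal is correct and follows essentially the same route as the paper's proof: bound the initial data through $K(\tau,\omega_n)$ using the $\Omega_m$-bounds and dominated convergence, extract a weak subsequential limit $(u_0,z_0)\in K(\tau,\omega)$ by weak lower semicontinuity of the norm, and transfer weak convergence to the solutions via the argument of Lemma \ref{cprv}(i) together with \eqref{rdsw2}. Your explicit treatment of the shifted base points $\theta_{-\tau}\omega_n$ (re-running the proof of Lemma \ref{cprv}(i) rather than citing it, since the translates need not lie in a common $\Omega_{m'}$) is precisely what the paper glosses as ``following the proof of Lemma \ref{cprv} (i)''.
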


\begin{proof}
Let $\omega_n \to \omega$ in $\Omega_m$ and
$({\widetilde{u}}_n,  {\widetilde{z}}_n)
\in \Phi (t, \tau, \omega_n,
K(\tau, \omega_n))$. We need to find
$({\widetilde{u}},  {\widetilde{z}})
\in \Phi (t, \tau, \omega, K(\tau, \omega))$
such that, up to  a subsequence,
\be\label{lem52p1}
({\widetilde{u}}_n, {\widetilde{z}}_n)
\rightharpoonup
({\widetilde{u}}, {\widetilde{z}})
\ \mbox{in } \hone \times \ltwo.
\ee
Since 
$({\widetilde{u}}_n,  {\widetilde{z}}_n)
\in \Phi (t, \tau, \omega_n,
K(\tau, \omega_n))$,
for every $n\in \N$, 
there  exist
$(u_{0,n}, z_{0,n})
\in K(\tau, \omega_n) $
and  
$(u_n(\cdot, \tau, \theta_{-\tau} \omega_n, u_{0,n}),
z_n(\cdot, \tau, \theta_{-\tau}\omega_n, z_{0,n}))$
   with
initial condition $(u_{0,n}, z_{0,n})$ at $\tau$
such that
\be\label{lem52p2}
({\widetilde{u}}_n,  {\widetilde{z}}_n)
= (u_n( t+\tau, \tau, \theta_{-\tau} \omega_n, u_{0,n}),
z_n(t+\tau, \tau, \theta_{-\tau}\omega_n, z_{0,n})) .
\ee
By \eqref{rdsw2} we have
\be\label{lem52p3}
z_n(t, \tau, \theta_{-\tau}\omega_n, z_{0,n})
=
v_n(t, \tau, \theta_{-\tau}\omega_n, v_{0,n})
+\varepsilon y(\theta_{t-\tau} \omega_n)
u_n(t, \tau, \theta_{-\tau}\omega_n, u_{0,n})
\ee
with
\be\label{lem52p3a}
v_{0,n} =z_{0,n}
-\varepsilon y(\omega_n) u_{0,n}.
\ee
Since $(u_{0,n}, z_{0,n})
\in K(\tau, \omega_n) $,  by \eqref{lem51_1}-\eqref{lem51_2} we get
for all $n\in \N$,
$$
\|u_{0,n}\|_{H^1}^2 +  \| z_{0,n}\|^2
$$
\be\label{lem52p4}
\le
 M(1+ \varepsilon y^2 (\omega_n) )
 \left (1 + 
    \int^0_{ - \infty} 
   e^{ \int_0^{s}(2\sigma -\varepsilon c
-\varepsilon c | y(\theta_{r} \omega_n)|^2)dr }
\left (
 1+ \| g(s+\tau,  \cdot) \|^2 + \varepsilon|y(\theta_{s} \omega_n)|^2 
\right ) ds \right ).
\ee
Since $\omega_n \in \Omega_m$, by \eqref{defom} we find that
for all $n \in \N$ and $s\le -m$,
\be\label{lem52p6}
\int_s^0 | y(\theta_{r} \omega_n)|^2dr \le -{\frac s\alpha}.
\ee
Let $\varepsilon_1 = {\frac {\alpha \sigma}{c(1+ \alpha)}}$.
By \eqref{lem52p6} and \eqref{omegam1}
we get for all $\varepsilon \in (0, \varepsilon_1)$,
$n \in \N$ and $s\le -m$,
\be\label{lem52p7}
  e^{ \int_0^{s}(2\sigma -\varepsilon c
-\varepsilon c | y(\theta_{r} \omega_n)|^2)dr }
\le e^{\sigma s}
\quad \mbox{and} \ 
|y(\theta_{s} \omega_n)|^2 
\le 4 s^2 +C_1.
\ee
By \eqref{lem52p7}, Lemma \ref{omegam} (i) and the dominated
convergence theorem we obtain
$$\lim_{n \to \infty}
\int_{-\infty}^{-m}
 e^{ \int_0^{s}(2\sigma -\varepsilon c
-\varepsilon c | y(\theta_{r} \omega_n)|^2)dr }
\left (
 1+ \| g(s+\tau,  \cdot) \|^2 + \varepsilon|y(\theta_{s} \omega_n)|^2 
\right ) ds
$$
$$
 =\int_{-\infty}^{-m}
  e^{ \int_0^{s}(2\sigma -\varepsilon c
-\varepsilon c | y(\theta_{r} \omega)|^2)dr }
\left (
 1+ \| g(s+\tau,  \cdot) \|^2 + \varepsilon|y(\theta_{s} \omega)|^2 
\right ) ds
$$
which along with Lemma \ref{omegam} (i) again implies
$$\lim_{n \to \infty}
\int_{-\infty}^{0}
 e^{ \int_0^{s}(2\sigma -\varepsilon c
-\varepsilon c | y(\theta_{r} \omega_n)|^2)dr }
\left (
 1+ \| g(s+\tau,  \cdot) \|^2 + \varepsilon|y(\theta_{s} \omega_n)|^2 
\right ) ds
$$
\be\label{lem52p9}
 =\int_{-\infty}^0
  e^{ \int_0^{s}(2\sigma -\varepsilon c
-\varepsilon c | y(\theta_{r} \omega)|^2)dr }
\left (
 1+ \| g(s+\tau,  \cdot) \|^2 + \varepsilon|y(\theta_{s} \omega)|^2 
\right ) ds.
\ee
By \eqref{lem52p9} and Lemma \ref{omegam} (i), we get from
\eqref{lem52p4}
\be\label{lem52p11}
\limsup_{n\to \infty}
(\|u_{0,n}\|_{H^1}^2 +  \| z_{0,n}\|^2)
$$
$$
\le
 M(1+ \varepsilon y^2 (\omega) )
 \left (1 + 
    \int^0_{ - \infty} 
   e^{ \int_0^{s}(2\sigma -\varepsilon c
-\varepsilon c | y(\theta_{r} \omega)|^2)dr }
\left (
 1+ \| g(s+\tau,  \cdot) \|^2 + \varepsilon|y(\theta_{s} \omega)|^2 
\right ) ds \right ).
\ee
This indicates  the  sequence $\{(u_{0,n}, z_{0,n})\}_{n=1}^\infty$ 
is bounded in $\hone \times \ltwo$ and hence there exists
$(u_0, z_0) \in \hone \times \ltwo$ such that, up to a subsequence,
\be\label{lem52p12}
 (u_{0,n}, z_{0,n}) \rightharpoonup (u_0,z_0)
 \ \mbox{ in  } \   \hone \times \ltwo.
 \ee
 By \eqref{lem52p11}-\eqref{lem52p12} we get
 \be\label{lem52p14}
 \|u_{0}\|_{H^1}^2 +  \| z_{0}\|^2
$$
$$
\le
 M(1+ \varepsilon y^2 (\omega) )
 \left (1 + 
    \int^0_{ - \infty} 
   e^{ \int_0^{s}(2\sigma -\varepsilon c
-\varepsilon c | y(\theta_{r} \omega)|^2)dr }
\left (
 1+ \| g(s+\tau,  \cdot) \|^2 + \varepsilon|y(\theta_{s} \omega)|^2 
\right ) ds \right ).
\ee
By \eqref{lem52p3} with $t=\tau$,
\eqref{lem52p3a},   \eqref{lem52p12}
 and Lemma \ref{omegam} we have
\be\label{lem52p13}
 (u_{0,n}, v_{0,n}) \rightharpoonup (u_0,v_0)
 \ \mbox{ in } \  \hone \times \ltwo
 \ \mbox{with} \  
 v_0 = z_0 -\varepsilon y(\omega) u_0.
 \ee
 Using \eqref{lem52p13}, following the proof
 of Lemma \ref{cprv} (i), we infer that
 system \eqref{pde1}-\eqref{pde3} has a solution
 $(u,v) = (u(\cdot, \tau, \theta_{-\tau} \omega, u_0), 
  v(\cdot, \tau,  \theta_{-\tau} \omega, v_0))$
  with initial condition $(u_0,v_0)$ at $\tau$ 
 such that, up to a  subsequence,  for all $r\ge \tau$,
  $$(u_n( r, \tau, \theta_{-\tau} \omega_n, u_{0,n}),
v_n(r, \tau, \theta_{-\tau}\omega_n, v_{0,n}) )
\rightharpoonup
(u(r, \tau, \theta_{-\tau} \omega, u_0), 
  v(r, \tau,  \theta_{-\tau} \omega, v_0))
  $$
  in $\hone \times \ltwo$, which together with
  \eqref{lem52p3}-\eqref{lem52p3a}  and Lemma \ref{omegam}
   implies for all $r\ge \tau$,
   \be\label{lem52p20} 
  (u_n( r, \tau, \theta_{-\tau} \omega_n, u_{0,n}),
z_n(r, \tau, \theta_{-\tau}\omega_n, z_{0,n}))  
\rightharpoonup
(u(r, \tau, \theta_{-\tau} \omega, u_0), 
  z(r, \tau,  \theta_{-\tau} \omega, z_0))
\ee
in  $\hone \times \ltwo$ where
$$
z(r, \tau,  \theta_{-\tau} \omega, z_0)
=  
v (r, \tau, \theta_{-\tau}\omega , v_{0})
+\varepsilon y(\theta_{r-\tau} \omega)
u(r, \tau, \theta_{-\tau}\omega_n, u_{0})
$$
with $
v_{0} =z_{0}
-\varepsilon y(\omega) u_{0} $.
By \eqref{lem52p2} and \eqref{lem52p20} we get
for $t\in \R^+$,
\be\label{lem52p30}
({\widetilde{u}}_n , {\widetilde{z}}_n )  
\rightharpoonup
(u(t+\tau, \tau, \theta_{-\tau} \omega, u_0), 
  z(t+\tau, \tau,  \theta_{-\tau} \omega, z_0))
  \ \mbox{ in } \ 
  \hone \times \ltwo.
 \ee
  By \eqref{lem51_1}-\eqref{lem51_2}  and \eqref{lem52p14} we
  have
  $(u_0,v_0) \in K(\tau, \omega)$,
  which along with \eqref{lem52p30}
  implies \eqref{lem52p1}
  with 
  $\widetilde{u}
  = u(t+\tau, \tau, \theta_{-\tau} \omega, u_0)$
  and 
  $\widetilde{z}
  = z(t+\tau, \tau, \theta_{-\tau} \omega, z_0)$,
  and thus completes the proof.
\end{proof}

Next, we prove the   $\cald$-pullback
asymptotic compactness of $\Phi$.

\begin{lem}
\label{lem53}
Under conditions 
 \eqref{f1}-\eqref{f3}  and \eqref{g1},
  there exists   
    $\varepsilon_0 = \varepsilon_0 (\alpha, \lambda, f)
    \in (0,1)$
       such that 
 for all  $\varepsilon \in (0, \varepsilon_0]$,
   $\Phi$ is $\cald$-pullback asymptotically compact
 in $\hone \times \ltwo$.  
 \end{lem}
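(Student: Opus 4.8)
The plan is to reduce this lemma directly to the asymptotic compactness of the transformed system already established in Lemma \ref{asyv}, the only additional work being the change of variables between the $(u,z)$ and $(u,v)$ coordinates. By Definition \ref{asycomp}, I would fix $\tau \in \R$, $\omega \in \Omega$ and $D \in \cald$, and take an arbitrary sequence
$$
(\ut_n, {\widetilde{z}}_n) \in \Phi(t_n, \tau - t_n, \theta_{-t_n}\omega, D(\tau - t_n, \theta_{-t_n}\omega)), \qquad t_n \to \infty,
$$
and produce a convergent subsequence in $\hone \times \ltwo$. By the definition \eqref{rdsw1} of $\Phi$ together with the explicit description \eqref{shift}, for each $n$ there exist initial data $(u_{0,n}, z_{0,n}) \in D(\tau - t_n, \theta_{-t_n}\omega)$ and a solution $(u_n, z_n)$ of \eqref{spde1}-\eqref{spde3} such that
$$
\ut_n = u_n(\tau, \tau - t_n, \theta_{-\tau}\omega, u_{0,n}), \qquad
{\widetilde{z}}_n = z_n(\tau, \tau - t_n, \theta_{-\tau}\omega, z_{0,n}).
$$

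Next I would pass to the transformed variable by setting $v_{0,n} = z_{0,n} - \varepsilon y(\omega) u_{0,n}$, so that $(u_{0,n}, v_{0,n})$ serve as initial data for \eqref{pde1}-\eqref{pde3}; the corresponding solution $v_n$ satisfies the transformation identity \eqref{tranz}, which at the terminal time $\tau$ under the pullback shift $\theta_{-\tau}$ reads, as in \eqref{shift},
$$
{\widetilde{z}}_n = v_n(\tau, \tau - t_n, \theta_{-\tau}\omega, v_{0,n}) + \varepsilon y(\omega)\, \ut_n.
$$
Since $(u_{0,n}, z_{0,n}) \in D(\tau - t_n, \theta_{-t_n}\omega)$ with $D \in \cald$, the hypotheses of Lemma \ref{asyv} are met, so there exists $\varepsilon_0 = \varepsilon_0(\alpha, \lambda, f) \in (0,1)$ such that for every $\varepsilon \in (0, \varepsilon_0]$ the sequence $\bigl(\ut_n,\, v_n(\tau, \tau - t_n, \theta_{-\tau}\omega, v_{0,n})\bigr)$ has a subsequence (not relabeled) converging strongly to some $(\ut, \vt)$ in $\hone \times \ltwo$.

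Finally I would transfer this convergence back to the original coordinates: because $\varepsilon y(\omega)$ is a fixed scalar, the map $(u,v) \mapsto (u,\, v + \varepsilon y(\omega) u)$ is a bounded linear isomorphism of $\hone \times \ltwo$. Strong convergence of $\ut_n \to \ut$ in $\hone$ (hence in $\ltwo$) and of the $v$-component to $\vt$ in $\ltwo$ therefore gives
$$
({\widetilde{u}}_n, {\widetilde{z}}_n) \to \bigl(\ut,\; \vt + \varepsilon y(\omega)\ut\bigr)
\quad \text{in } \hone \times \ltwo
$$
along the subsequence, which is exactly the desired precompactness and establishes the $\cald$-pullback asymptotic compactness of $\Phi$. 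The substantive analytic content — the energy-equation argument controlling the $\limsup$ of the norms in the critical, non-Lipschitz setting on $\R^n$ — lives entirely in Lemma \ref{asyv}; the main point to check carefully here is simply that the time arguments of $y(\theta_t\omega)$ match so that the terminal $z$-to-$v$ conversion produces precisely the coefficient $\varepsilon y(\omega)$ recorded in \eqref{shift}, after which the coordinate change is routine.
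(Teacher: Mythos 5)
Your proposal is correct and follows essentially the same route as the paper's own proof: unpack $\Phi$ via \eqref{rdsw1}/\eqref{shift}, pass to the $(u,v)$ variables with $v_{0,n}=z_{0,n}-\varepsilon y(\omega)u_{0,n}$, apply Lemma \ref{asyv} to get a strongly convergent subsequence, and transfer it back through the affine relation $\widetilde{z}_n = v_n(\tau,\tau-t_n,\theta_{-\tau}\omega,v_{0,n}) + \varepsilon y(\omega)\widetilde{u}_n$. Your explicit remark that $(u,v)\mapsto(u, v+\varepsilon y(\omega)u)$ is a bounded linear isomorphism just makes precise the final step the paper leaves implicit.
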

 
 \begin{proof}
 Given $\tau \in \R$,
 $\omega \in \Omega$,
  $D \in \cald$
  and 
$({\widetilde{u}}_n,  {\widetilde{z}}_n)
\in \Phi (t_n, \tau-t_n, \theta_{-t_n} \omega,
D(\tau-t_n,  \theta_{-t_n} \omega))$
with $t_n \to \infty$.
 We want  to prove $({\widetilde{u}}_n,  {\widetilde{z}}_n)$
 has a convergent subsequence.
 By assumption,    
there  exist
$(u_{0,n}, z_{0,n})
\in D(\tau-t_n,  \theta_{-t_n}\omega) $
and  
$(u_n(\cdot, \tau-t_n, \theta_{-\tau} \omega, u_{0,n}),
z_n(\cdot, \tau- t_n, \theta_{-\tau}\omega, z_{0,n}))$
   with
initial condition $(u_{0,n}, z_{0,n})$ at $\tau - t_n$
such that
\be\label{lem53p2}
({\widetilde{u}}_n,  {\widetilde{z}}_n)
=(u_n(\tau, \tau-t_n, \theta_{-\tau} \omega, u_{0,n}),
z_n(\tau, \tau- t_n, \theta_{-\tau}\omega, z_{0,n})).
\ee
By \eqref{shift} and \eqref{lem53p2} we have
\be\label{lem53p4}
({\widetilde{u}}_n,  {\widetilde{z}}_n)
=(u_n(\tau, \tau-t_n, \theta_{-\tau} \omega, u_{0,n}),
v_n(\tau, \tau- t_n, \theta_{-\tau}\omega, v_{0,n})
+\varepsilon y(\omega)
u_n(\tau, \tau- t_n, \theta_{-\tau}\omega, u_{0,n})).
\ee
By Lemma \ref{asyv},
the sequence $\{
(u_n(\tau, \tau-t_n, \theta_{-\tau} \omega, u_{0,n}),
v_n(\tau, \tau- t_n, \theta_{-\tau}\omega, v_{0,n}) )\}_{n=1}^\infty$
has a convergent subsequence in $\hone \times \ltwo$,
and so is  the sequence $\{({\widetilde{u}}_n,  {\widetilde{z}}_n ) \}_{n=1}^\infty$
by \eqref{lem53p4}. This completes  the proof.
    \end{proof}

Finally,  we prove the existence and uniqueness of 
  random attractors of $\Phi$.

 \begin{thm}
Suppose 
 \eqref{f1}-\eqref{f3}  and \eqref{g1}
  hold. 
   Then there exists   
    $\varepsilon_0 = \varepsilon_0 (\alpha, \lambda, f)
    \in (0,1)$
       such that 
 for all  $\varepsilon \in (0, \varepsilon_0]$,
  system  \eqref{spde1}-\eqref{spde3}
  has    a unique $\cald$-pullback attractor 
  $\cala  \in \cald$
 in $\hone \times \ltwo$.
 Furthermore, if $g: \R \to \ltwo$ is $T$-periodic for some $T>0$,
 then  so is  the attractor $\cala$; that is,
 $\cala(\tau+T, \omega) =\cala(\tau, \omega)$
 for all $\tau \in \R$  and $\omega \in \Omega$.
\end{thm}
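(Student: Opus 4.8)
The plan is to obtain this theorem as a direct application of the abstract existence-and-uniqueness result Theorem~\ref{att} to the multivalued non-autonomous cocycle $\Phi$ defined in \eqref{rdsw1}, taking $X=\hone\times\ltwo$ and working over the metric dynamical system $(\Omega,\calf,P,\{\theta_t\}_{t\in\R})$. First I would record that the phase space $\hone\times\ltwo$ is a separable Banach space, that $\Omega$ is a metric space, and that by Lemma~\ref{omegam}(ii) we have $\Omega=\bigcup_{m=1}^\infty\Omega_m$ with each $\Omega_m$ closed and hence $\Omega_m\in\calb(\Omega)$; this verifies the structural hypotheses of Theorem~\ref{att}. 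It was already established (via Lemmas~\ref{meuz} and \ref{close}, together with properties (ii)-(iii) of Definition~\ref{cocy}) that $\Phi$ is indeed a multivalued non-autonomous cocycle on $\hone\times\ltwo$, so this setup is legitimate.

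Next I would verify the four dynamical hypotheses (i)-(iv) of Theorem~\ref{att}, each of which has been proved in a preceding lemma. I would fix $\varepsilon_0$ to be the minimum of the threshold constants produced by Lemmas~\ref{lem51}, \ref{lem52} and \ref{lem53}; since each depends only on $\alpha,\lambda,f$, so does the resulting $\varepsilon_0\in(0,1)$. For every $\varepsilon\in(0,\varepsilon_0]$: hypothesis (i), the $\cald$-pullback asymptotic compactness of $\Phi$, is exactly Lemma~\ref{lem53}; hypothesis (ii), the upper semicontinuity of $\Phi(t,\tau,\omega,\cdot)$, is Lemma~\ref{uscphi}; hypothesis (iii), the existence of a closed $\cald$-pullback absorbing set $K\in\cald$ of the form \eqref{lem51_1}-\eqref{lem51_2}, is Lemma~\ref{lem51}; and hypothesis (iv), the weak upper semicontinuity of $\Phi(t,\tau,\cdot,K(\tau,\cdot))\colon\Omega_m\to 2^{\hone\times\ltwo}$ for every $m\in\N$, $t\in\R^+$ and $\tau\in\R$, is Lemma~\ref{lem52}. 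Applying Theorem~\ref{att} then yields that $\Phi$ has a unique $\cald$-pullback attractor $\cala=\Omega(K)$ in $\cald$, which is the first assertion.

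For the periodicity claim I would invoke the second part of Theorem~\ref{att}, which asserts that if both $\Phi$ and the absorbing set $K$ are $T$-periodic then so is $\cala$. The $T$-periodicity of $K$ is immediate from its explicit radius $L(\tau,\omega)$ in \eqref{lem51_2}: the only $\tau$-dependence there enters through the factor $\|g(s+\tau,\cdot)\|^2$, so $T$-periodicity of $g$ gives $L(\tau+T,\omega)=L(\tau,\omega)$ and hence $K(\tau+T,\omega)=K(\tau,\omega)$. For the $T$-periodicity of $\Phi$, I would use the translation structure of the solution operators of \eqref{spde1}-\eqref{spde3}: replacing the initial time $\tau$ by $\tau+T$ and using $g(\cdot+T,\cdot)=g(\cdot,\cdot)$ shows $\Phi(t,\tau+T,\omega,\cdot)=\Phi(t,\tau,\omega,\cdot)$, matching Definition~\ref{cocy}. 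Combining these with Theorem~\ref{att} then gives $\cala(\tau+T,\omega)=\cala(\tau,\omega)$.

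I expect the only genuine point requiring care, as opposed to a bookkeeping assembly of the earlier lemmas, to be the verification that the cocycle $\Phi$ itself is $T$-periodic when $g$ is $T$-periodic. This rests on the fact that, because \eqref{spde1}-\eqref{spde3} depends on $t$ only through $g(t,x)$, the set of solutions on $[\tau+T,\infty)$ is the $T$-translate of the set of solutions on $[\tau,\infty)$, so that after the random shift $\theta_{-\tau}$ built into \eqref{rdsw1} the images of $\Phi$ at initial times $\tau$ and $\tau+T$ coincide; making this identification precise at the level of the solution sets $U,Z$ is the one step I would write out in detail.
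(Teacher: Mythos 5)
Your proposal is correct and follows essentially the same route as the paper: the paper's proof likewise assembles Lemmas \ref{lem51}, \ref{lem52}, \ref{lem53} and \ref{uscphi} to verify hypotheses (i)--(iv) of Theorem \ref{att}, and obtains the $T$-periodicity of $\cala$ from the $T$-periodicity of $\Phi$ (checked directly from the equations when $g$ is $T$-periodic) together with that of $K$ read off from \eqref{lem51_1}--\eqref{lem51_2}. Your additional bookkeeping (taking $\varepsilon_0$ as the minimum of the three thresholds, and verifying the structural hypotheses on $X$ and $\Omega=\bigcup_{m}\Omega_m$ via Lemma \ref{omegam}) only makes explicit what the paper leaves implicit.
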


\begin{proof}
We have proved that $\Phi$ satisfies
all conditions (i)-(iv) of Theorem \ref{att}; more precisely,
$\Phi$ is $\cald$-pullback asymptotically compact
in $\hone \times \ltwo$ by Lemma \ref{lem53},
$\Phi(t,\tau, \omega, \cdot): \hone \times \ltwo
\to 2^{\hone \times \ltwo}$
is upper semicontinuous  by Lemma  \ref{uscphi},
$\Phi$ has a closed $\cald$-pullback absorbing set
$K\in \cald$ by Lemma \ref{lem51},
and $\Phi(t,\tau, \cdot, K(\tau,\cdot)):
\Omega_m \to 2^{\hone \times \ltwo}$
is weakly upper semicontinuous 
by Lemma \ref{lem52}.
As a result,  the existence and uniqueness of $\cald$-pullback
attractors of $\Phi$ follows immediately. 

If $g: \R \to \ltwo$ is $T$-periodic, then one can
check that $\Phi$  is also $T$-periodic:
$\Phi(t,\tau+T, \omega, \cdot)
= \Phi(t,\tau, \omega, \cdot)$
for all $t \in \R^+$,
$\tau \in \R$  and $\omega \in \Omega$.
On the other hand, by \eqref{lem51_1}-\eqref{lem51_2}
we see that
$K(\tau+T, \omega) =K(\tau, \omega)$
for all $\tau\in \R$ and
$\omega \in \Omega$
in the present case.
Therefore, the $T$-periodicity of $\cala$ follows from
Theorem \ref{att} again.
 \end{proof}

\end{document}